\DeclareSymbolFontAlphabet{\mathbb}{AMSb}
\DeclareSymbolFontAlphabet{\mathbbl}{bbold}
\theoremstyle{definition}
\newtheorem* {theorem*}{Theorem}
\newtheorem* {conjecture*}{Conjecture}
\newtheorem{theorem}{Theorem}[section]
\newtheorem{problem}[theorem]{Problem}
\theoremstyle{definition}
\newtheorem* {example*}{Example}
\newtheorem{lemma}[theorem]{Lemma}
\theoremstyle{definition}
\newtheorem{definition}[theorem]{Definition}
\theoremstyle{definition}
\newtheorem{proposition}[theorem]{Proposition}
\newtheorem{corollary}[theorem]{Corollary}
\newtheorem *{remark}{Remark}
\theoremstyle{definition}
\newtheorem {example}[theorem]{Example}
\theoremstyle{definition}
\theoremstyle{definition}
\theoremstyle{definition}
\def\({\left(}
\def\){\right)}
\newcommand{\QQ}{\mathbb{Q}}
\newcommand{\cO}{\mathcal{O}}
\newcommand{\cR}{\mathcal{R}}
\newcommand{\cS}{\mathcal{S}}
\def\cS{\mathcal{S}}
\def\NN{\mathbb{N}}
\def\End{\mathrm{End}}
\def\RR{\mathbb{R}}
\def\ZZ{\mathbb{Z}}
\def\ch{\mathrm{ch}}
\def\spanning{\textnormal{-span}}
\def\fk{\mathfrak}
\def\barr{\begin{array}}
\def\earr{\end{array}}
\def\ba{\begin{aligned}}
\def\ea{\end{aligned}}
\def\be{\begin{equation}}
\def\ee{\end{equation}}
\def\qquand{\qquad\text{and}\qquad}
\def\quand{\quad\text{and}\quad}
\def\hs{\hspace{0.5mm}}
\def\ds{\displaystyle}
\def\id{\mathrm{id}}
\def\PP{\mathbb{P}}
\def\ben{\begin{enumerate}}
\def\een{\end{enumerate}}
\def\hs{\hspace{0.5mm}}
\def\Des{\mathrm{Des}}
\def\c{\textbf{c}}
\renewcommand{\r}[1]{\textcolor{red}{#1}}
\newcommand{\cA}{\mathcal{A}}
\newcommand{\cB}{\mathcal{B}}
\def\arcstart{\ \xy<0cm,-.15cm>\xymatrix@R=.1cm@C=.3cm }
\newcommand{\arcstartc}[1]{\ \xy<0cm,-.15cm>\xymatrix@R=.1cm@C=#1cm}
\def\t{\mathbf{t}}
\def\r{\mathbf{r}}
\def\sC{\mathscr{C}}
\def\kk{\mathbbl{k}}
\def\bfW{\textbf{W}}
\def\bfI{\textbf{I}}
\def\FC{\textsf{FC}}
\def\Words{\mathbb{W}}
\def\antipode{{\tt S}}
\def\dfslash{\hs/\hspace{-1.0mm}/\hs}
\def\dbslash{\hs\backslash\hspace{-1mm}\backslash\hs}
\newcommand{\Sym}{\textsf{Sym}}
\newcommand{\WQSym}{\textsf{WQSym}}
\newcommand{\FQSym}{\textsf{FQSym}}
\newcommand{\Shuffle}{\textsf{Shuffle}}
\def\QSym{\textsf{QSym}}
\def\Sym{\textsf{Sym}}
\def\zetaq{\zeta_{\QSym}}
\def\r{{\tt r}}
\def\c{{\tt c}}
\def\t{{\tt t}}
\def\P{\textsf{P}}
\def\Packed{\mathbb{W}_\P}
\def\bfP{{\textbf{W}}_\P}
\def\XX{\mathbb{X}}
\def\Peak{\mathrm{Peak}}
\def\Valley{\mathrm{Val}}
\def\OQSym{\cO\QSym}
\def\OSym{\cO\Sym}
\def\ch{\operatorname{char}}
\def\sort{\mathrm{sort}}
\def\rpk{\flat}
\newcommand{\word}{\operatorname{word}}
\def\bfPi{\mathbf{\Pi}}
\def\st{\operatorname{fl}}
\def\fkR{{r}}
\numberwithin{equation}{section}
\renewcommand{\@makefnmark}{\mbox{\textsuperscript{}}}
\begin{document}
\title{Bialgebras for Stanley symmetric functions}
\author{
Eric Marberg
\\ Department of Mathematics \\  Hong Kong University of Science and Technology \\ {\tt eric.marberg@gmail.com}
}

\date{}

\maketitle

\begin{abstract}
We construct a non-commutative, non-cocommutative, graded bialgebra $\mathbf{\Pi}$
with a basis indexed by the permutations in all finite symmetric groups.
Unlike the formally similar Malvenuto-Poirier-Reutenauer Hopf algebra,
this bialgebra does not have finite graded dimension.
After giving formulas for the product and coproduct,
we show that there is a natural morphism from $\mathbf{\Pi}$ to the algebra of quasi-symmetric functions,
under which the image of a permutation is its associated Stanley symmetric function.
As an application, we use this morphism to derive some new enumerative identities.
We also describe analogues of $\mathbf{\Pi}$ for the other classical types.
In these cases, the relevant objects are module coalgebras rather than bialgebras,
but there are again natural morphisms to the quasi-symmetric functions,
under which the image of a signed permutation
is the corresponding Stanley symmetric function of type B, C, or D.
\end{abstract}

\setcounter{tocdepth}{2}
\tableofcontents

\section{Introduction}

Fix a positive integer $n$ and let $S_n$ denote the symmetric group of permutations of $\{1,2,\dots,n\}$,
which we write in one-line notation as words $\pi=\pi_1\pi_2\cdots \pi_n$
containing each $i \in \{1,2,\dots,n\}$ as a letter exactly once.
The \emph{right weak order} for $S_n$ is the partial order whose covering relations
are \[\pi_1 \cdots \pi_i \pi_{i+1}\cdots \pi_n \lessdot \pi_1 \cdots \pi_{i+1} \pi_{i}\cdots \pi_n \quad\text{whenever}\quad\pi_i < \pi_{i+1}.\]
A \emph{reduced word} for $\pi \in S_n$ corresponds to a maximal chain from the identity permutation $123\cdots n$ 
to $\pi$ in this order,
and the \emph{length} $\ell(\pi)$ of a permutation $\pi$ is the number of covering relations in any such chain.
We denote the number of reduced words for $\pi \in S_n$ by $\fkR(\pi)$.

The original motivation for this paper comes from a sequence of identities
relating counts of reduced words for certain permutations.
Given a word $w=w_1w_2\cdots w_n$ with distinct integer letters,
define the associated \emph{flattened word} by
$\st(w) = \phi(w_1)\phi(w_2)\cdots \phi(w_n) \in S_n$ where $\phi$ is the 
unique order-preserving bijection $\{w_1,w_2,\dots,w_n\} \to \{1,2,\dots,n\}$.
Now consider the 
subsets $\cA(n)  $ and $\cB(m,n) $ of $S_n$ defined recursively as follows.

Let  $\cA(n)$ be the set of permutations $\pi = \pi_1\pi_2\cdots \pi_n \in S_n$
with $\pi_1 =\pi_{n} +1$ and $\st(\pi_2\pi_3\cdots \pi_{n-1}) \in \cA(n-2)$,
where $\cA(1) =\{1\}$ and $\cA(2) = \{21\}$.
For example: 
\[ 
\ba
\cA(3) &= \{231,312\},\\
\cA(4) &= \{2431, 3412, 4213\},\\
\cA(5) &= \{24531, 25341, 34512, 35142, 42513, 45123, 52314, 53124\}.
\ea
\]
Next, let
$\cB(1,n) = \cB(n,n) = \{ n\cdots 321\}$
and define
$\cB(m,n)$ for $1<m<n$ to be the set of permutations $\pi=\pi_1\pi_2\cdots \pi_{n} \in S_n$
with either
$\pi_1=m$ and $\st(\pi_2\pi_3\cdots \pi_{n}) \in \cB(m-1,n-1)$
or $\pi_n = m$ and $\st(\pi_1\pi_2\cdots \pi_{n-1}) \in \cB(m,n-1)$. For example:
\[ 
\ba
\cB(2,3) &= \{231,312\},\\
\cB(2,4) &= \{2431, 3412, 4132\},\\
\cB(3,4) &= \{3241 , 3412 , 4213\},\\
\cB(2,5) &= \{25431 , 35412 , 45132 , 51432\},\\
\cB(3,5) &= \{32541 , 34512 , 35142 , 42513 , 45123 , 52143\},\\
\cB(4,5) &= \{43251 , 43512 , 45213 , 53214\}.
\ea
\]
The set $\cA(n)$ has size $(n-1)!! = (n-1)(n-3)(n-5)\cdots$ while $\cB(m,n)$ has size $\binom{n-1}{m-1}$.
One can show that all elements of $\cA(n)$ have length $\binom{p}{2} + \binom{q}{2}$ for $p = \lfloor \frac{n+1}{2} \rfloor$ and $q= \lceil \frac{n+1}{2} \rceil$,
while all elements of $\cB(m,n)$ have length $\binom{m}{2} + \binom{n-m+1}{2}$.

On computing $\fkR(\pi)$ for $\pi \in \cA(n)$ and $\pi \in \cB(m,n)$,
one observes the following phenomenon:

\begin{proposition}\label{prop0}
If $p \in\{ \lfloor \frac{n+1}{2} \rfloor, \lceil \frac{n+1}{2} \rceil\}$ then $ 
\sum_{\pi \in \cA(n)} \fkR(\pi)
=
 \sum_{\pi \in \cB(p,n)} \fkR(\pi)
$.
\end{proposition}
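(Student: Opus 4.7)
The plan is to reduce the statement to an identity in the algebra $\QSym$. Write $F_\pi$ for the Stanley symmetric function of $\pi$. Every permutation appearing in either sum has the common length $\ell = \binom{\lfloor (n+1)/2 \rfloor}{2} + \binom{\lceil (n+1)/2 \rceil}{2}$, and by the classical monomial expansion of Stanley symmetric functions indexed by reduced words paired with compatible sequences, one has $\fkR(\pi) = [x_1 x_2 \cdots x_{\ell(\pi)}]\, F_\pi$ (the unique compatible sequence with strictly increasing indices $1,2,\dots,\ell(\pi)$ contributes $1$ for each reduced word). Hence Proposition~\ref{prop0} would follow immediately from the stronger identity
\[
\sum_{\pi \in \cA(n)} F_\pi \;=\; \sum_{\pi \in \cB(p,n)} F_\pi
\]
by extracting the coefficient of $x_1 x_2 \cdots x_\ell$.

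To establish this stronger identity, I would work inside the bialgebra $\bfPi$ and apply the morphism $\Psi \colon \bfPi \to \QSym$ (sending $\pi \mapsto F_\pi$) constructed in the body of the paper. The recursion defining $\cB(m,n)$, which prepends or appends the value $m$ to a permutation in $\cB(m-1,n-1)$ or $\cB(m,n-1)$, should translate, via the coproduct of $\bfPi$ applied to an outer letter, into a two-branch recursion for $\sum_{\pi \in \cB(m,n)} F_\pi$. The recursion defining $\cA(n)$, which sandwiches a permutation in $\cA(n-2)$ between two consecutive boundary values $\pi_1 = \pi_n+1$, can be viewed as a coupled ``prepend-and-append'' operation, and I would verify that it matches the result of iterating the $\cB$-recursion symmetrically with $m=p$. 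The cleanest outcome would be to prove by induction on $n$ that both sums satisfy the same recurrence with the same base case; alternatively, one might identify them with the common image under $\Psi$ of a distinguished element of $\bfPi$, perhaps indexed by standard Young tableaux of a staircase shape consistent with the length formula.

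The main obstacle is to match the two recursions cleanly at the bialgebra level. A particularly delicate subpoint is that the right-hand side must be independent of the choice of $p \in \{\lfloor (n+1)/2 \rfloor, \lceil (n+1)/2 \rceil\}$; for even $n$ the two sets $\cB(p,n)$ are genuinely different (as the lists $\cB(2,4)$ and $\cB(3,4)$ above already show), so one must exhibit either a reduced-word-preserving bijection between them or a direct bialgebra identity yielding $\sum_{\pi \in \cB(n/2,n)} F_\pi = \sum_{\pi \in \cB(n/2+1,n)} F_\pi$. Once these pieces are in place, the coefficient extraction $[x_1 x_2 \cdots x_\ell]$ applied to the symmetric function identity completes the proof.
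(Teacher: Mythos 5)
Your reduction to the symmetric-function identity $\sum_{\pi\in\cA(n)}F_\pi=\sum_{\pi\in\cB(p,n)}F_\pi$ via the coefficient of $x_1x_2\cdots x_\ell$ is sound, and your instinct to work in $\bfPi$ with the morphism $\Psi$ is also correct; indeed the paper itself proves a lift of this kind (Corollary~\ref{p-cor}). But the central engine of the proof is missing from your plan, and the ``match the two recursions'' strategy you propose for filling it is unlikely to go through cleanly, because the $\cA$-recursion (sandwiching with $\pi_1=\pi_n+1$) and the $\cB$-recursion (two-branch prepend/append of the single value $m$) have structurally different shapes, and the paper never compares them directly.

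What actually makes the argument work is the observation that the $\cB$-recursion is precisely the recursive definition of $\cS_\shuffle(u,v)$ specialized to $u=p\cdots 321$ and $v=q\cdots 321$, so that $\sum_{\pi\in\cB(p,n)}[\pi]=\nabla_\shuffle\bigl([p\cdots321]\otimes[q\cdots321]\bigr)$ is literally a \emph{product} in $\bfPi$ (Theorem~\ref{a-shuff-thm}); your proposal instead tries to see the $\cB$-recursion through the \emph{coproduct}, which is the wrong structure. Once the product identification is made, applying $\Psi_>$ gives $\sum_{\pi\in\cB(p,n)}F_\pi=F_{p\cdots321}F_{q\cdots321}=s_{\delta_p}s_{\delta_q}$, and the delicate $p$-independence issue you flag disappears immediately since the right-hand side is symmetric in $p$ and $q$. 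The paper does not derive the $\cA$-side from a recursion either: it takes $\sum_{\pi\in\cA(n)}\fkR(\pi)=N(p,q)$ as a cited input (Proposition~1.2, from Can--Joyce and Hamaker--Marberg--Pawlowski) and shows $\sum_{\pi\in\cB(p,n)}\fkR(\pi)=N(p,q)$ by applying the algebra morphism $[\pi]\mapsto\fkR(\pi)/\ell(\pi)!$ to the product identity above. Your ``common image of a distinguished element'' alternative is essentially this idea, but you would still need the product formula to make it concrete, and you would still need the cited evaluation of the $\cA$-side --- the two sums are not equal as elements of $\bfPi$, only after applying the relevant functional. As written, the proposal leaves the key identity unproved and points in a direction (coproduct/recursion matching) that does not lead to the paper's argument.
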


For example, it holds that
\[\ds\sum_{\pi \in \cA(5)} \fkR(\pi) = 9 + 10 + 5 + 16 + 16 + 5 + 10 + 9 = 19 + 5 + 16 + 16 + 5 + 19 =  \sum_{\pi \in \cB(3,5)} \fkR(\pi).\]
%
%
The only method we know to prove Proposition~\ref{prop0} is algebraic and indirect.
We suspect that there should exist a natural bijection $\bigsqcup_{\pi \in \cA(n)} \cR(\pi) \leftrightarrow \bigsqcup_{\pi \in \cB(p,n)} \cR(\pi)$ for
$p \in\{ \lfloor \frac{n+1}{2} \rfloor, \lceil \frac{n+1}{2} \rceil\}$, where $\cR(\pi)$ denotes the set of reduced words for $\pi \in S_n$.
Computer experiments indicate that such a bijection might be obtained from a variation of the  \emph{Little map} introduced in \cite{Little}.
It remains an open problem to construct this.

A non-bijective proof of Proposition~\ref{prop0} goes as follows.
 For integers $p,q>0$, let
 \[N(p, q) = N(q,p)= \tbinom{P+Q}{P} \cdot \fkR(p\cdots 321) \cdot \fkR(q\cdots 321)\]
 where $P = \binom{p}{2}$ and $Q=\binom{q}{2}$.
The following combines \cite[Theorem 3.7]{CanJoyce} and \cite[Theorem 1.4]{HMP1}:

\begin{proposition}[See \cite{CanJoyce,HMP1}]
If $p = \lfloor \frac{n+1}{2} \rfloor$ and $q= \lceil \frac{n+1}{2} \rceil$ then $N(p,q) = \sum_{w \in \cA(n)} \fkR(w)$.
\end{proposition}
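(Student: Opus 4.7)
The plan is to prove $N(p,q) = \sum_{\pi \in \cA(n)} \fkR(\pi)$ by interpreting both sides as counts of involution reduced words for a common distinguished involution and then invoking the two cited references.

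For the right-hand side, the first step is to identify the recursively defined set $\cA(n)$ with a set of atoms for a distinguished longest involution $z_n$ in a symmetric group. The defining conditions on $\cA(n)$ — namely $\pi_1 = \pi_n + 1$ together with $\st(\pi_2 \cdots \pi_{n-1}) \in \cA(n-2)$ — mirror the standard recursion that produces atoms of $z_n$ by peeling off the outermost two-cycle of the form $(1,n)$ and iterating on the remainder. Once this identification is established, $\bigsqcup_{\pi \in \cA(n)} \cR(\pi)$ is in bijection with the full set of involution reduced words for $z_n$, and its cardinality is computed by the shuffle formula of \cite[Theorem 1.4]{HMP1}.

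For the left-hand side, I would then apply \cite[Theorem 3.7]{CanJoyce}, which evaluates the number of involution reduced words for a staircase (dominant) involution in closed form. Since $z_n$ splits into two dominant staircase blocks of sizes $p = \lfloor (n+1)/2 \rfloor$ and $q = \lceil (n+1)/2 \rceil$, whose sets of involution reduced words shuffle independently, the count factors as $\binom{P+Q}{P}\fkR(p\cdots 321)\fkR(q\cdots 321)$, which is precisely $N(p,q)$. Combining the two reinterpretations yields the proposition.

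The main obstacle is the first step: rigorously matching the recursion defining $\cA(n)$ to the inductive description of atoms for the correct longest involution, with attention to the parity of $n$ (which governs whether $z_n$ is fixed-point-free or has a single central fixed point, and correspondingly whether one works in the ordinary or the \textsf{FPF} involution setting). Once this combinatorial identification is pinned down and the length bookkeeping $\binom{p}{2} + \binom{q}{2}$ is reconciled with the atom-length formula, the rest of the argument is a direct application of the two cited theorems.
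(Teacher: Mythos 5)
The paper itself gives no argument here --- it simply cites \cite[Theorem 3.7]{CanJoyce} and \cite[Theorem 1.4]{HMP1} --- so there is no internal proof to compare against. Your reconstruction of what those sources supply is right in broad outline: $\cA(n)$ is the set of \emph{atoms} of the reverse permutation $w_0^{(n)} = n\cdots 321 \in S_n$, which is an involution; $\bigsqcup_{w \in \cA(n)} \cR(w)$ is then exactly the set of involution reduced words for $w_0^{(n)}$; and the recursion ``$\pi_1 = \pi_n + 1$ together with $\st(\pi_2\cdots\pi_{n-1}) \in \cA(n-2)$'' is the standard atom recursion obtained by peeling off the $2$-cycle $(1,n)$. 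You also correctly flag this identification, with its parity bookkeeping, as the lemma one must supply before the citations can be invoked, and the length count $\binom{p}{2}+\binom{q}{2}$ matches the involution length of $w_0^{(n)}$.

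Where the outline goes astray is the heuristic offered for the factorization into $N(p,q)$. You claim that ``$z_n$ splits into two dominant staircase blocks of sizes $p$ and $q$, whose sets of involution reduced words shuffle independently.'' That picture is false: $w_0^{(n)}\colon i \mapsto n+1-i$ is not a direct sum of two smaller permutations --- every one of its $2$-cycles $(i,\,n+1-i)$ links the bottom of $\{1,\dots,n\}$ to the top --- so there is nothing to shuffle block-by-block, and the factors $\fkR(p\cdots 321)$ and $\fkR(q\cdots 321)$ in $N(p,q)$ count \emph{ordinary} reduced words of ordinary staircases, not involution reduced words of sub-blocks of $z_n$. The mechanism actually underlying $N(p,q)$ in \cite{HMP1} is a symmetric-function identity: the involution Stanley symmetric function of $w_0^{(n)}$ is the Schur $P$-function $P_{(n-1,n-3,\ldots)}$, which factors as $s_{\delta_p} s_{\delta_q}$ (the identity this very paper recalls as \cite[Corollary 1.14]{HMP4}); extracting the coefficient of the square-free monomial $x_1x_2\cdots x_{\binom{p}{2}+\binom{q}{2}}$ and applying Edelman--Greene, so that this coefficient in $s_{\delta_k}$ is $\fkR(k\cdots 321)$, yields the binomial times the two reduced-word counts. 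The ``split into blocks'' lives at the level of the shifted shape $(n-1,n-3,\dots)$ cutting into two straight staircases, not at the level of a factorization of $w_0^{(n)}$ itself, and an attempt to make your shuffle heuristic precise would stall at exactly that point.
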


The next statement, which is a corollary of our new results in this paper,
implies Proposition~\ref{prop0}:

\begin{proposition} \label{new-prop}
If $1 \leq p \leq n$ and $q=n+1-p$ then $N(p,q) = \sum_{w \in \cB(p,n)} \fkR(w).$
\end{proposition}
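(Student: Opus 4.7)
My plan is to deduce Proposition~\ref{new-prop} from the identity of Stanley symmetric functions
\[\sum_{w \in \cB(p,n)} F_w \ =\ F_{p\cdots 321} \cdot F_{q\cdots 321}\]
in $\QSym$ (where $q = n+1-p$), by comparing coefficients of the monomial quasi-symmetric function $M_{(1^{P+Q})}$ on both sides. Here $F_w$ denotes the Stanley symmetric function of $w$. To prove this $\QSym$-identity, I would lift it to the bialgebra $\bfPi$ constructed earlier in the paper: namely, I aim to show that
\[[\,p\cdots 321\,]\cdot[\,q\cdots 321\,] \ =\ \sum_{w \in \cB(p,n)}[\,w\,]\]
holds in $\bfPi$, and then apply the natural morphism $\bfPi \to \QSym$ that sends $[w] \mapsto F_w$.

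I would prove the $\bfPi$-identity by induction on $n$ with $p+q = n+1$ fixed. The base cases $p = 1$ and $p = n$ are immediate since $\cB(1,n) = \cB(n,n) = \{n\cdots 321\}$ collapses the product to a single term. For the inductive step, the explicit product formula for $\bfPi$ established earlier in the paper should split the product $[\,p\cdots 321\,] \cdot [\,q\cdots 321\,]$ into two sub-sums according to whether the letter $p$ ends up at the first or the last position of each resulting permutation. After flattening, the first case should recurse onto $\sum_{w \in \cB(p-1,n-1)}[w]$ and the second case onto $\sum_{w \in \cB(p,n-1)}[w]$, mirroring exactly the two-clause recursive definition of $\cB(p,n)$ and also matching the dimension count $|\cB(p,n)| = \binom{n-1}{p-1}$.

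For the coefficient extraction, recall that $\fkR(w)$ equals the coefficient of $M_{(1^{\ell(w)})}$ in $F_w$, since each reduced word contributes a unique compatible sequence with strictly increasing indices. Because all $w \in \cB(p,n)$ have common length $P+Q$, the coefficient of $M_{(1^{P+Q})}$ on the left-hand side is $\sum_{w \in \cB(p,n)} \fkR(w)$. On the right-hand side, any product $M_\alpha \cdot M_\beta$ in $\QSym$ having some part of $\alpha$ or $\beta$ at least $2$ produces only monomial terms with a part $\geq 2$, so only the contribution $(\fkR(p\cdots 321)\,M_{(1^P)})\cdot(\fkR(q\cdots 321)\,M_{(1^Q)})$ can yield $M_{(1^{P+Q})}$; the non-overlapping shuffle gives $M_{(1^P)} \cdot M_{(1^Q)} = \binom{P+Q}{P} M_{(1^{P+Q})}$, and the resulting coefficient is exactly $N(p,q)$. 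As a small sanity check, for $p=q=2$ one has $F_{21} \cdot F_{21} = s_1^2 = h_2 + e_2 = F_{231} + F_{312}$, giving $N(2,2) = 2 = \fkR(231)+\fkR(312)$.

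The principal obstacle is establishing the $\bfPi$-identity: this requires a careful unpacking of the explicit product formula in $\bfPi$ and a verification that, when specialized to the two long-element basis vectors, it cleanly reproduces the two-clause recursion that defines $\cB(p,n)$. This matching between the combinatorics of the product and the recursion is the heart of the argument; once it is secured, both the $\QSym$-identity and the enumerative identity $N(p,q) = \sum_{w \in \cB(p,n)}\fkR(w)$ follow by the extraction outlined above.
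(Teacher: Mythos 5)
Your proposal is correct, and it reaches the same $\bfPi$-identity $[p\cdots 321]\cdot[q\cdots 321]=\sum_{w\in\cB(p,n)}[w]$ that the paper establishes (Theorem~\ref{intro-thm}(a), via Theorem~\ref{a-shuff-thm} and the observation that $\cS_\shuffle(p\cdots 321,q\cdots 321)=\cB(p,n)$), but it finishes differently. The paper applies the algebra morphism of Theorem~\ref{intro-thm}(c), $[\pi]\mapsto \fkR(\pi)/\ell(\pi)!$ from Corollary~\ref{alg-hom-cor}, directly to $\kk$, which yields $N(p,q)/(P+Q)!=\sum_{\pi}\fkR(\pi)/(P+Q)!$ in one line. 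You instead push the $\bfPi$-identity forward along $\Psi_>$ to obtain the $\QSym$-identity $F_{p\cdots 321}F_{q\cdots 321}=\sum_{w\in\cB(p,n)}F_w$ and then extract the coefficient of $M_{(1^{P+Q})}$. Your coefficient analysis is sound: the coefficient of $M_{(1^l)}$ in $F_\pi$ is indeed $\fkR(\pi)$ when $\ell(\pi)=l$, and only the non-overlapping shuffles of $M_{(1^P)}$ with $M_{(1^Q)}$ contribute to $M_{(1^{P+Q})}$, giving $\binom{P+Q}{P}$. The trade-off is that your route is slightly longer but also yields the stronger symmetric-function identity $\sum_{w\in\cB(p,n)}F_w = F_{p\cdots 321}F_{q\cdots 321}$ along the way, which is essentially the content of the paper's Corollary~\ref{p-cor}; the paper's route via $\fkR(\pi)/\ell(\pi)!$ is more economical for the purely enumerative statement. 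One minor remark: you flag proving the $\bfPi$-identity as the principal obstacle, but the paper already does the heavy lifting in Theorem~\ref{a-shuff-thm}; the specialization to $u=p\cdots 321$, $v=q\cdots 321$ is precisely the two-clause recursion you anticipate, since in Definition~\ref{a-shuffle-def}(c) the index $j$ is $1$ and $k+1=q$, so $\tilde u=(p-1)\cdots 21$ and $\tilde v=(q-1)\cdots 21$, and the two branches place $p$ at the front or the back exactly as in $\cB(p,n)$.
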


Write $\pi \overset{\bullet}= \pi'\pi''$ to denote a length-additive factorization of $\pi \in S_n$ as a product of $\pi',\pi'' \in S_n$.
To prove the last result, we interpret
the two sides of Proposition~\ref{new-prop}
as the images of a product of two elements of a certain bialgebra $\bfPi$
under a natural homomorphism $\bfPi \to \QQ$.
The following summarizes several of our main results, and gives the facts needed for this approach:

\begin{theorem}\label{intro-thm}
Let $\kk$ be a field.
There exists a graded $\kk$-bialgebra $\bfPi$ with a basis given by the symbols $[\pi]$,
where $\pi$ ranges over all elements of $S_1 \sqcup S_2 \sqcup S_3 \sqcup \cdots$,
with the following properties:
\ben
\item[(a)] The product of $[m\cdots 321]$ and $[(n-m+1)\cdots 321]$ in $\bfPi$ is $\sum_{\pi \in \cB(m,n)} [\pi]$.
\item[(b)] The coproduct of $\bfPi$ is the linear map with $[\pi] \mapsto \sum_{\pi \overset{\bullet}= \pi'\pi''} [\pi']\otimes [\pi'']$.
\item[(c)] If $\ch(\kk) =0$ then the linear map $\bfPi \to \kk$ with $[\pi] \mapsto \frac{\fkR(\pi)}{\ell(\pi)!}$ is an algebra morphism.
\een
\end{theorem}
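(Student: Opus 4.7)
The plan is to construct $\bfPi$ as the graded $\kk$-vector space with basis $\{[\pi]:\pi\in\bigsqcup_{n\geq 1}S_n\}$, where $[\pi]$ has degree $\ell(\pi)$, equipped with an explicit product $\mu$ and coproduct $\Delta$. I would set $\Delta([\pi]):=\sum_{\pi\overset{\bullet}=\pi'\pi''}[\pi']\otimes[\pi'']$, which is manifestly graded and coassociative. To define the product, for $\pi'\in S_a$ and $\pi''\in S_b$ let $\pi''^{+(a-1)}\in S_{a+b-1}$ denote the shift of $\pi''$ acting on positions and values $\{a,a+1,\dots,a+b-1\}$, and set $[\pi']\cdot[\pi'']:=\sum_{\pi\in S_{a+b-1}}c_\pi\hs[\pi]$, where $c_\pi\cdot\fkR(\pi)$ counts triples $(u,v,w)$ with $u\in\cR(\pi')$, $v\in\cR(\pi''^{+(a-1)})$, and $w\in\cR(\pi)$ obtained as a shuffle of $u$ and $v$. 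A sanity check on $[21]\cdot[21]=[231]+[312]=\sum_{\pi\in\cB(2,3)}[\pi]$ confirms the normalization, and the fact that disjoint-alphabet shuffles of reduced words are again reduced guarantees that $c_\pi$ is an integer.

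Next, I would verify the bialgebra axioms: associativity of $\mu$ (following from associativity of shuffle-and-shift), coassociativity of $\Delta$ (immediate), and the compatibility $\Delta\mu=(\mu\otimes\mu)\circ(\mathrm{id}\otimes\tau\otimes\mathrm{id})\circ(\Delta\otimes\Delta)$. This compatibility is the main obstacle; it amounts to showing that length-additive factorizations of the permutations $\pi$ appearing in $[\pi']\cdot[\pi'']$ correspond bijectively to pairs of length-additive factorizations of $\pi'$ and $\pi''$, refined by the shuffle data. I would attack this by splitting every reduced word of each $\pi$ arising in the product at a fixed position into a prefix and a suffix, and matching these against the induced prefix/suffix decompositions of $u$ and $v$, using that the alphabets $\{s_1,\dots,s_{a-1}\}$ and $\{s_a,\dots,s_{a+b-2}\}$ are disjoint.

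With the bialgebra in place, part (b) holds by construction. For part (a), I would argue by induction on $n$: the recursive description of $\cB(m,n)$ (insert the value $m$ at position $1$ or at position $n$, then flatten) should match a recursion for $[m\cdots 321]\cdot[(n-m+1)\cdots 321]$ obtained by tracking the first letter of each shuffled reduced word, which must be either $s_{m-1}$ (from the first factor) or $s_m$ (from the shifted second factor); the base cases $\cB(1,n)=\cB(n,n)=\{n\cdots 321\}$ are immediate. For part (c), writing $n=\ell(\pi')+\ell(\pi'')$, the morphism property of $\alpha:[\pi]\mapsto\fkR(\pi)/\ell(\pi)!$ is the chain
\[
\alpha([\pi'])\hs\alpha([\pi''])=\frac{\fkR(\pi')\hs\fkR(\pi'')}{\ell(\pi')!\,\ell(\pi'')!}=\frac{\fkR(\pi')\hs\fkR(\pi'')\hs\binom{n}{\ell(\pi')}}{n!}=\sum_\pi c_\pi\frac{\fkR(\pi)}{\ell(\pi)!}=\alpha([\pi']\cdot[\pi'']),
\]
where the middle equality follows from the identity $\fkR(\pi')\fkR(\pi'')\binom{n}{\ell(\pi')}=\sum_\pi c_\pi\fkR(\pi)$, which is exactly the total count of triples $(u,v,w)$ defining the product.
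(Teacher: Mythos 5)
Your plan captures the same combinatorial spirit as the paper (product from shuffles of reduced words over disjoint alphabets; coproduct from length-additive factorizations), but there are two genuine gaps.

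First, the well-definedness of the product is glossed over. You claim "$c_\pi$ is an integer" because "disjoint-alphabet shuffles of reduced words are again reduced." That observation only tells you that each shuffle word $w$ is a reduced word for \emph{some} permutation $\pi$; it does not tell you that the multiset of such $w$'s, grouped by $\pi$, always produces the \emph{entire} set $\cR(\pi)$. To know that, you need the word property for Coxeter groups (Matsumoto/Tits): the set of shuffles of $\cR(\pi')$ with a shifted copy of $\cR(\pi'')$ is closed under commutation and braid moves (no braid move can transfer letters between the two alphabets $\{1,\dots,m-1\}$ and $\{m,\dots\}$ except involving letters $m-1,m,m+1$, and you must show no move creates adjacent repeats), hence is a disjoint union of Coxeter classes, and each such class is exactly $\cR(\pi)$ for some $\pi$ because it contains no word with adjacent repeated letters. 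The paper makes precisely this argument in the proof of Theorem~\ref{sub-b-thm}, but it also sidesteps most of the axiom verification by realizing $\bfPi$ as a sub-bialgebra of a bigger word bialgebra $\bfW\subset\End(\Shuffle)$, where the product is convolution (automatically associative) and the compatibility with $\Delta$ is inherited from the fact that the shuffle algebra $\Shuffle$ is already a Hopf algebra. Your direct verification of compatibility $\Delta\mu = (\mu\otimes\mu)\circ(\id\otimes\tau\otimes\id)\circ(\Delta\otimes\Delta)$ is only sketched; the paper's indirect route gets it essentially for free.

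Second, your argument for part (a) does not work as stated. You propose tracking "the first letter of each shuffled reduced word, which must be either $s_{m-1}$ (from the first factor) or $s_m$ (from the shifted second factor)." This is false: reduced words for $m\cdots 321$ may begin with any $s_i$, $1\le i\le m-1$ (e.g.\ $\cR(321)=\{121,212\}$), so the first letter carries no such dichotomy. The recursion for $\cB(m,n)$ is on the \emph{position of $m$ in the one-line notation} of $\pi$ (first or last), not on first letters of reduced words. The paper proves the general product formula $\nabla_\shuffle([u]\otimes[v])=\sum_{\pi\in\cS_\shuffle(u,v)}[\pi]$ (Theorem~\ref{a-shuff-thm}) by an induction on wiring diagrams, isolating the monotone wire through the letter $m+1$, and then specializes $u=m\cdots 321$, $v=q\cdots 321$ to recover $\cB(m,n)=\cS_\shuffle(m\cdots 321,q\cdots 321)$. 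Some such structural argument is unavoidable, and your induction as written would need to be replaced.

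Your proof of part (c) is correct and matches the paper's (Corollary~\ref{alg-hom-cor}): the count $\fkR(\pi')\fkR(\pi'')\binom{n}{\ell(\pi')}=\sum_\pi c_\pi\fkR(\pi)$ is exactly the double counting of shuffle words, once the well-definedness issue above is resolved.
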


The bialgebra $\bfPi$ is non-commutative and non-cocommutative, and 
is a sub-object of a larger bialgebra of words $\bfW$ constructed in Section~\ref{word-sect}.
The bialgebra $\bfW$ can be viewed as a generalization of
the Malvenuto-Poirier-Reutenauer Hopf algebra \cite{AguiarSottile,Mal2}.

Theorem~\ref{sub-b-thm} asserts the existence of the bialgebra $\bfPi$.
Part (a) of Theorem~\ref{intro-thm} is a special case of Theorem~\ref{a-shuff-thm}, 
which describes, more generally, the product in $\bfPi$ of any two basis elements $[\pi']$ and $[\pi'']$.
Part (b) is Corollary~\ref{coprod-cor}
and
part (c) follows from Corollary~\ref{alg-hom-cor}. 
Given these results, we can quickly derive Proposition~\ref{new-prop} in the following way:

\begin{proof}[Proof of Proposition~\ref{new-prop}]
Expressing the product in $\bfPi$ of $[p\cdots 321]$ and $[q\cdots 321]$ as in (a) and 
then applying the morphism in (c)
gives $\frac{N(p,q)}{(P+Q)!} = \sum_{\pi \in \cB(p,n)} \frac{\fkR(\pi)}{(P+Q)!}$
for $P= \binom{p}{2}$ and $Q = \binom{q}{2}$.
\end{proof}

The bialgebra of $\bfPi$ is of interest on its own, and can be used to give a simple construction 
of the \emph{Stanley symmetric function} $F_\pi$ of a permutation $\pi \in S_n$.
The precise definition of $F_\pi$ is reviewed in Section~\ref{type-a-sect}.

Let $\QSym$ (see Section~\ref{qsym-sect}) denote the Hopf algebra of quasi-symmetric functions over $\kk$
and write $\zetaq : \QSym \to \kk$ for the algebra morphism that sets $x_1=1$ and $x_2=x_3=\dots =0$.
Next let $\zeta_> : \bfPi \to \kk$ be the linear map with 
\[ \zeta_>([\pi]) = \begin{cases} 1 &\text{if $\pi$ is such that $\pi_i = i-1$ whenever $\pi_i < i$,}
\\
0&\text{otherwise}.
\end{cases}
\]
Equivalently, $\zeta_>([\pi])=1$ if and only if $\pi$ has a cycle decomposition in which every factor has the form
$(b,b-1,\cdots,a+1,a)$ for some integers $a<b$;
this occurs precisely when $\pi$ has a decreasing reduced word.
The following combines Theorem~\ref{unique-thm} and Proposition~\ref{stan-prop}:

\begin{theorem}\label{intro-thm2}
There is a unique graded bialgebra morphism $\Psi_> : \bfPi \to \QSym$ that satisfies 
$\zeta_> = \zetaq \circ \Psi_>$. For this map, $\Psi_>([\pi])=F_\pi$ is the Stanley symmetric function of $\pi \in S_n$.
\end{theorem}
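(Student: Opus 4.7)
The plan is to define the candidate map on the basis by $\Psi_>([\pi]) := F_\pi$ (extended $\kk$-linearly), verify that it is a graded bialgebra morphism satisfying $\zeta_> = \zetaq \circ \Psi_>$, and argue uniqueness separately. Gradedness is immediate since $F_\pi$ is homogeneous of degree $\ell(\pi)$, which matches the grading on $\bfPi$.

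For the character compatibility, I would use the fundamental quasi-symmetric expansion $F_\pi = \sum_{a \in \cR(\pi)} L_{\Des(a)}$, so that $F_\pi(1, 0, 0, \dots)$ collapses to a count of reduced words of $\pi$ whose descent set is trivial (i.e.\ decreasing, in the convention compatible with $\zeta_>$). This count equals $1$ precisely when $\pi$ admits a unique decreasing reduced word, which by standard results happens exactly when $\pi$ decomposes as a product of cycles of the form $(b, b-1, \dots, a+1, a)$, exactly the condition in the definition of $\zeta_>([\pi])=1$. Otherwise the count is $0$, yielding $\zetaq \circ \Psi_> = \zeta_>$.

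For the bialgebra morphism property, I would treat the two structures separately. The coalgebra compatibility reduces to comparing part (b) of Theorem \ref{intro-thm} with the classical identity
\[
\Delta(F_\pi) \;=\; \sum_{\pi \overset{\bullet}= \pi'\pi''} F_{\pi'} \otimes F_{\pi''},
\]
where the sum runs over length-additive factorizations; this is a well-known property of Stanley symmetric functions, provable by splitting each reduced word of $\pi$ at the unique position that respects the factorization. The algebra compatibility follows by matching the shuffle-type product expansion in Theorem \ref{a-shuff-thm} with the analogous product formula for $F_{\pi'} \cdot F_{\pi''}$, since both are governed by the same recipe of interleaving reduced words subject to the underlying permutation constraints.

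For uniqueness, the natural approach follows the Aguiar-Bergeron-Sottile paradigm: any two such maps $\Psi, \Psi'$ must agree after composition with $\zetaq$, hence after composition with $\zetaq^{\otimes k} \circ \Delta^{(k-1)}$, which recovers every monomial quasi-symmetric coefficient of $\Psi([\pi])$. The main obstacle I expect is that $\bfPi$ fails to be connected in the classical sense—degree zero is spanned by the infinitely many basis elements $[\id_n]$—so the usual universal property of $(\QSym,\zetaq)$ cannot be quoted verbatim. I anticipate that the cited Theorem \ref{unique-thm} handles this by carefully adapting the iterated-coproduct extraction argument to the non-connected graded structure of $\bfPi$, possibly exploiting that each positive degree of $\bfPi$ is still finitely presented from the group-likes $[\id_n]$ together with primitive-like generators.
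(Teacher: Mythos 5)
Your route — positing $\Psi_>([\pi]) := F_\pi$ and verifying the bialgebra axioms, with uniqueness done separately — is genuinely different from the paper's, and in outline it can work, but the central algebra-morphism step is left as a gesture rather than an argument. The paper never verifies by hand that $[\pi]\mapsto F_\pi$ respects products; it invokes the terminality of $(\QSym,\zetaq)$ among combinatorial coalgebras for each connected, finite-graded-dimension summand $\bfW_n$ (Theorem~\ref{unique-thm}), obtains the algebra property for free — both $\Psi\circ\nabla_\shuffle$ and $\nabla\circ(\Psi\otimes\Psi)$ are coalgebra morphisms into the terminal object, hence equal — and only afterward identifies $\Psi_>([\pi])=F_\pi$ from the explicit formula $\Psi_>([w,n])=L_{\alpha^\c}$ of Proposition~\ref{words-opsi-prop}. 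In your sketch, the appeal to ``the analogous product formula for $F_{\pi'}\cdot F_{\pi''}$'' is circular as stated: that formula is precisely Corollary~\ref{prod-cor}, which the paper \emph{derives from} the theorem you are trying to prove, and there is no pre-existing ``classical'' statement in terms of $\cS_\shuffle$. To close the gap on your own terms you should work at the level of all of $\bfW$ and show $\Psi_>([a,m])\,\Psi_>([b,n])=\sum_{u\in a\shuffle(b\uparrow m)}\Psi_>([u,m+n])$ using the classical shuffle-product rule for fundamental quasi-symmetric functions, then sum over $a\in\cR(\pi')$ and $b\in\cR(\pi'')$; this is a valid route but nontrivial, and it is not in your proposal.

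Your uniqueness sketch has the right instinct, but the resolution of non-connectedness is simpler than you anticipate: since $\Delta_\odot(\bfPi_n)\subseteq\bfPi_n\otimes\bfPi_n$, the coalgebra $\bfPi$ is a direct sum of the sub-coalgebras $\bfPi_n$, each of which \emph{is} connected with finite graded dimension, so \cite[Theorem 4.1]{ABS} applies summand by summand, and any compatible coalgebra morphism $\bfPi\to\QSym$ is determined on each $\bfPi_n$. Your character-compatibility check is essentially right, granting the (true, but worth stating) fact that a permutation admits at most one strictly decreasing reduced word, so that $\zetaq(F_\pi)\in\{0,1\}$ matches $\zeta_>([\pi])$.
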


This result  lets us lift enumerative identities like Proposition~\ref{new-prop}
to the level of symmetric functions; see, for example, Corollary~\ref{p-cor}.

There are also analogues of $\bfPi$ in types B/C and D,
which can be used to give a simple algebraic construction of the Stanley symmetric functions in the other classical types.
The relevant objects $\bfPi^B$ and $\bfPi^D$ are vector spaces spanned by signed permutations.
These spaces are no longer bialgebras,
but are naturally interpreted as graded $\bfPi$-module coalgebras.
It is an open problem to find an explicit formula for the $\bfPi$-module action on these spaces,
in the style of Theorem~\ref{a-shuff-thm}; see Problem~\ref{problem}.

 The Hopf algebra of quasi-symmetric functions may be 
viewed as a $\bfPi$-module coalgebra via (an analogue of) the morphism in Theorem~\ref{intro-thm2}.
In Section~\ref{last-subsect}, we show that there are canonical module coalgebra morphisms $\bfPi^B \to \QSym$ and $\bfPi^D \to \QSym$
under which the image of a signed permutation is precisely the associated Stanley symmetric function of type B, C, or D.

Here is a brief outline of what follows.
After some preliminaries in Section~\ref{prelim-sect},
we construct the bialgebras $\bfW$ and $\bfPi$ in Sections~\ref{word-sect} and \ref{perm-sect}.
In Section~\ref{qsym-sect}, we review and slightly extend some basic facts about combinatorial coalgebras and Hopf algebras from \cite{ABS}.
Finally, in Section~\ref{stan-sect}, we discuss the relationship between our constructions and Stanley symmetric functions.

\subsection*{Acknowledgements}

This work was partially supported by Hong Kong RGC Grant ECS 26305218.
I am grateful to Zachary Hamaker, Amy Pang, and Brendan Pawlowski
for many useful comments and discussions.

\section{Preliminaries}\label{prelim-sect}

Let $\ZZ \supset \NN \supset \PP$ denote the sets of all integers, nonnegative integers,
and positive integers.

\subsection{Algebras, coalgebras, and bialgebras}\label{monoidal-sect}

Throughout, we fix a field $\kk$ and write ${\otimes} = {\otimes_\kk}$ for the usual tensor product.
We briefly review the notions of $\kk$-algebras, coalgebras, and bialgebras; for more background, see \cite{Cartier} or \cite{ReinerNotes}.

\begin{definition}
A \emph{$\kk$-algebra} is a triple $(A,\nabla,\iota)$ where $A $ is a $\kk$-vector space
and $\nabla : A \otimes A \to A$ and $\iota : \kk \to A$ are linear maps (the \emph{product} and \emph{unit}) making these diagrams commute:
\[\label{assoc0-eq}
{\footnotesize
\begin{diagram}[small]
\kk\otimes  A & \rTo^{\ \ \iota\otimes\id\ \ } & A\otimes A & \lTo^{\ \ \id\otimes \iota\ \ } & A \otimes \kk \\
 & \rdTo^{\cong} & \dTo^{\nabla} &  \ldTo^{\cong}\\
 & & A 
 \end{diagram}
\qquad\qquad
\begin{diagram}[small]
 A\otimes A \otimes A & \rTo^{\ \ \nabla\otimes \id\ \ }& A \otimes A\\
\dTo^{\id \otimes \nabla}  && \dTo_{\nabla} \\
A \otimes A & \rTo^{\nabla} & A 
\end{diagram}
}
\]
\end{definition}

The unit map $\iota : \kk \to A$ of an algebra is completely determined by the \emph{unit element} $\iota(1) \in A$.

\begin{definition}
A \emph{$\kk$-coalgebra} is a triple $(A,\Delta,\epsilon)$ where $A $ is a $\kk$-vector space
and $\Delta : A \to A \otimes A$ and $\epsilon : A \to \kk$ are linear maps (the \emph{coproduct} and \emph{counit}) making these
diagrams  commute:
\[\label{coassoc0-eq}
{\footnotesize
\begin{diagram}[small]
\kk\otimes  A & \lTo^{\ \ \epsilon\otimes\id\ \ } & A\otimes A & \rTo^{\ \ \id\otimes \epsilon\ \ } & A \otimes \kk \\
 & \luTo^{\cong} & \uTo^{\Delta} &  \ruTo^{\cong}\\
 & & A 
 \end{diagram}
\qquad\qquad
\begin{diagram}[small]
 A\otimes A \otimes A & \lTo^{\ \ \Delta\otimes \id\ \ }& A \otimes A\\
\uTo^{\id \otimes \Delta}  && \uTo_{\Delta} \\
A \otimes A & \lTo^{\Delta} & A 
\end{diagram}
}
\]
\end{definition}

Write $\beta : A \otimes B \xrightarrow{\sim} B \otimes A$ for the linear isomorphism with $a\otimes b \mapsto b \otimes a$.
An algebra is \emph{commutative} 
if $\nabla\circ \beta = \nabla$.
A coalgebra is \emph{cocommutative} if $\beta\circ \Delta = \Delta$.

\begin{definition}
A \emph{$\kk$-bialgebra}  is a tuple $(A,\nabla,\iota,\Delta,\epsilon)$ where
$(A,\nabla,\iota)$ is a $\kk$-algebra, $(A,\Delta,\epsilon)$ is a $\kk$-coalgebra,
the composition $\epsilon\circ \iota$ is the identity map $\kk \to \kk$, and these diagrams commute:
\be\label{compat-eq}
{\footnotesize
 \begin{diagram}[small]
 A\otimes A&\rTo^{\nabla} & A & \rTo^{\Delta} & A \otimes A \\
\dTo^{\Delta \otimes \Delta}  & &&& \uTo_{\nabla \otimes \nabla} \\
A \otimes  A \otimes A \otimes A  && \rTo^{\id\otimes \beta \otimes \id} &&
 A \otimes A \otimes A  \otimes A
\end{diagram}
\qquad\qquad
\begin{diagram}[small]
\kk & \rTo^{\iota} & A
\\ 
\dTo^{\cong} && \dTo_{\Delta} \\
\kk\otimes \kk & \rTo^{\ \ \iota \otimes \iota\ \ } & A \otimes A 
\end{diagram}
\qquad\qquad
\begin{diagram}[small]
A \otimes A& \rTo^{\ \ \epsilon\otimes \epsilon\ \ } & \kk \otimes \kk
\\ 
\dTo^{\nabla} && \dTo_{\cong} \\
A & \rTo^{\epsilon} & \kk
\end{diagram}
}
\ee
\end{definition}

Going forward, we often refer to $\kk$-vector spaces, $\kk$-algebras, $\kk$-coalgebras, and $\kk$-bialgebras
simply as vector spaces, algebras, coalgebras, and bialgebras.

A morphism of (bi, co) algebras is a linear map that commutes with the relevant (co)unit and (co)product maps.
If $A$ is an algebra then $A \otimes A$ is an algebra with product $(\nabla \otimes \nabla) \circ (\id \otimes \beta \otimes \id)$ and unit $(\iota \otimes \iota) \circ (\kk \xrightarrow{\sim} \kk \otimes \kk)$.
If $A$ is a coalgebra then $A\otimes A$ becomes a coalgebra in a similar way.
The diagrams \eqref{compat-eq} express that the coproduct and counit of a bialgebra are algebra morphisms,
and that the product and unit are coalgebra morphisms.

Given a bialgebra $(H,\nabla,\iota,\Delta,\epsilon)$ and linear maps $f,g : H \to H$,
define $f* g = \nabla \circ (f\otimes g) \circ \Delta$.
The operation $* $, called the \emph{convolution product}, 
makes the vector space $\End(H)$ of linear maps $H \to H$ into a $\kk$-algebra with unit element $\iota \circ \epsilon$,
called the \emph{convolution algebra} of $H$.
The bialgebra $H$ is a \emph{Hopf algebra} if the identity map $\id : H \to H$ has a left and right inverse $\antipode : H \to H$ in the convolution algebra.
The morphism $\antipode$ is called the \emph{antipode} of $H$; if it exists, then 
it is the unique linear map $H \to H$ with
 $\nabla \circ (\id \otimes \antipode) \circ \Delta=\nabla \circ (\antipode \otimes \id) \circ \Delta=\iota\circ \epsilon$.

A vector space $V$ is \emph{graded} if it is has a direct sum decomposition $V = \bigoplus_{n\in \NN} V_n$.
A linear map $\phi : U \to V$ between graded vector spaces is \emph{graded} if it has 
the form $\phi = \bigoplus_{n \in \NN} \phi_n$ where each $\phi_n: U_n \to V_n$ is linear.
An algebra $(V,\nabla,\iota)$ is \emph{graded} if $V$ is graded and the product and unit are graded linear maps,
where $V\otimes V$ is identified with the graded vector space $\bigoplus_{n \in \NN} \( \bigoplus_{i+j = n} V_i \otimes V_j\)$
and $\kk$ is viewed as the graded vector space in which all elements have degree zero.
Similarly, a coalgebra $(V,\Delta,\epsilon)$ is \emph{graded} if $V$ is graded and 
the coproduct and counit are graded linear maps.
A bialgebra is \emph{graded} if it is graded as both an algebra and a coalgebra.
A morphism of graded (bi, co) algebras is a morphism of (bi, co) algebras that is a graded linear map.

\subsection{Shuffle algebra}\label{shuffle-sect}

Throughout, we use the term \emph{word} to mean a finite sequence of positive integers.
For each $n \in \NN$,
let $\Shuffle_n$ be the $\kk$-vector space whose basis is the set of all $n$-letter words,
so that $\Shuffle_0$ is the
1-dimensional vector space spanned by the unique empty word $\emptyset$.
Write $\Shuffle = \bigoplus_{n \in \NN} \Shuffle_n$ for the corresponding graded vector space.

When $w =w_1w_2\cdots w_n$ is a word and $I=\{i_1 < i_2 < \dots < i_k\} \subset \{1,2,\dots,n\}$, we
set $w|_I = w_{i_1}w_{i_2}\cdots w_{i_k}$.
Given words $u= u_1u_2\cdots u_m$ and $v=v_1v_2\cdots v_n$,
define \[u \shuffle v = \sum_{\substack{ I\subset \{1,2,\dots,m+n\} \\ |I| = m}} \shuffle_I(u,v)\] where $w=\shuffle_I(u,v)$ is the unique $(m+n)$-letter word with $w|_I = u$ and $w|_{I^c} = v$.
Multiplicities may result in this sum; for example,  
$
12\shuffle 21 = 2\cdot 1221  + 1212 + 2121 + 2 \cdot 2112.
$
Let $u\odot v $ denote the concatenation of $u$ and $v$.
Both operations $\shuffle$ and $\odot$ extend to graded linear maps $\Shuffle \otimes \Shuffle \to \Shuffle$.
If $u$ and $v$ are nonempty, and $u'$ and $v'$ are the subwords given by omitting the first letters, 
then 
$u\shuffle v = u_1\odot (u'\shuffle v) + v_1\odot (u\shuffle v').$
We typically suppress the symbol $\odot$ and write $uv$ for $u\odot v$.

Define $\iota : \kk \to \Shuffle$ and $\epsilon : \Shuffle \to \kk$
to be the linear maps with $\iota(1) = \emptyset$
and $\epsilon(\emptyset) = 1$ 
and $\epsilon(w) = 0$ for all words $w\neq \emptyset$.
Define $\Delta : \Shuffle \to \Shuffle \otimes \Shuffle$ to be the 
linear map with
\[\Delta(w) = \sum_{i=0}^n w_1\cdots w_i \otimes w_{i+1}\cdots w_n\] for each $n$-letter word $w$, so that $\Delta(\emptyset) = \emptyset\otimes \emptyset$.
The tuple $(\Shuffle,\shuffle,\iota,\Delta,\epsilon)$ is a graded Hopf algebra,
 called the \emph{shuffle algebra}  \cite[\S1.4]{FreeLieBook}, which is commutative but not cocommutative.
Its antipode is the linear map   
with $\antipode(w) = (-1)^n w_n \cdots w_2w_1$ for words $w=w_1w_2\cdots w_n$.

\section{Bialgebras of words}\label{word-sect}

Let $\End(\Shuffle)$ denote the vector space of $\kk$-linear maps $\Shuffle \to \Shuffle$,
viewed as a $\kk$-algebra with respect to the convolution product
$f\otimes g \mapsto f * g := \shuffle \circ (f\otimes g) \circ \Delta$.
One can sometimes construct interesting bialgebras by pairing 
subalgebras of convolution algebras with a compatible coproduct. This is our approach here,
mimicking the description of the Malvenuto-Poirier-Reutenauer algebra in \cite{Hazewinkel1,Mal2}.

Let $w = w_1w_2\cdots w_m$ be a word. 
When $m>0$, let $\max(w)=\max \{ w_1,w_2,\dots,w_m\}$,
and define $\max(\emptyset) = 0$.
If we view $w$ as a map $\{1,2,\dots,m\} \to \PP$,
and if $v$ is a word with at least $\max(w)$ letters, then
the composition $v\circ w = v_{w_1} v_{w_2}\cdots v_{w_m}$ is another $m$-letter word.
We interpret  $v \circ \emptyset$ as $\emptyset$.
For each word $w$ with $\max(w) \leq n \in \NN$, define $[w,n] \in \End(\Shuffle)$ 
to be the linear map with
\[ [w,n](v) = \begin{cases} v\circ w=v_{w_1} v_{w_2}\cdots v_{w_m} & \text{if $v$ is a word with length $n$} \\ 
0&\text{for all other words $v$.}
\end{cases}
\]
Observe that $[\emptyset, 0]= \iota \circ \epsilon$ is the unit element of $\End(\Shuffle)$. The following is evident:

\begin{lemma}\label{obv-lem}
If  $w$ is a word with $\max(w) \leq n \in \NN$, then $[w,n](123\cdots n)= w$.
\end{lemma}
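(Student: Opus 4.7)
The plan is essentially a one-line unwinding of the definition of $[w,n]$, with a brief check that every index arising is legitimate. First I would specialize the definition at $v = 123\cdots n$: since this particular $v$ has length exactly $n$, we fall into the nontrivial case of the piecewise formula and get
\[
[w,n](123\cdots n) \;=\; v\circ w \;=\; v_{w_1}v_{w_2}\cdots v_{w_m},
\]
where $m$ is the length of $w$.

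Next I would observe that the hypothesis $\max(w)\leq n$ is exactly what is needed to make each index $w_j$ fall inside $\{1,2,\dots,n\}$, so the letters $v_{w_j}$ are well defined (matching the implicit assumption ``$v$ has at least $\max(w)$ letters'' in the definition of composition $v\circ w$). Since $v_i = i$ for every $i\in\{1,2,\dots,n\}$, substitution gives $v_{w_j} = w_j$ for each $j$, and therefore
\[
v\circ w \;=\; w_1w_2\cdots w_m \;=\; w,
\]
which is the claim. The edge case $w = \emptyset$ (so $m = 0$) is handled by the convention $v\circ\emptyset = \emptyset$ stated just before the definition of $[w,n]$, and it is consistent with $\max(\emptyset) = 0 \leq n$.

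There is no genuine obstacle here; the content is purely definitional, which matches the author's remark that the statement is ``evident.'' The only thing to be careful about is the bookkeeping distinction between the two roles played by the word $v$: it is the argument of the linear map $[w,n]$, and simultaneously it is what gets composed with $w$ on the right. Taking $v = 123\cdots n$ makes this composition act as the identity on the letters of $w$, which is the whole point of the lemma and the reason it will be useful later for recovering $w$ from the operator $[w,n]$.
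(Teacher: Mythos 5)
Your proof is correct and is exactly the intended definitional unwinding; the paper offers no argument beyond the remark that the lemma is ``evident,'' and your one-line computation $v\circ w = v_{w_1}\cdots v_{w_m} = w_1\cdots w_m = w$ for $v = 123\cdots n$, together with the check that $\max(w)\leq n$ makes the indices valid and the $w=\emptyset$ convention handles the degenerate case, is precisely what that remark is pointing at.
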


Thus $[v,m] = [w,n]$ in $\End(\Shuffle)$ if and only if $v=w$ and $m=n$.
Let $\Words_n$ for $n \in \NN$ be the set of endomorphisms $[w,n]$ where $w$ is a word with $\max(w) \leq n$.
Define $\Words = \bigcup_{n \in \NN} \Words_n$.
Let $\bfW$ (respectively, $\bfW_n$)  be the subspace of $\End(\Shuffle)$ spanned by $\Words$ (respectively, $\Words_n$).
Since the set of words is a basis for $\Shuffle$, Lemma~\ref{obv-lem} implies that 
$\Words$ is linearly independent. Therefore:

\begin{corollary}\label{basis-cor}
The set $\Words$ is a basis for $\bfW=\bigoplus_{n \in \NN} \bfW_n$.
\end{corollary}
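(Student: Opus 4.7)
The plan is to deduce the corollary directly from Lemma~\ref{obv-lem}, which already gives an injective ``probe'' for recovering the indexing data $(w,n)$ from the endomorphism $[w,n]$. The heart of the argument is linear independence of $\Words$; the direct sum decomposition will then be automatic since the subsets $\Words_n$ partition $\Words$.

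First I would record the following refinement of Lemma~\ref{obv-lem}: for any word $v$ of length $N$ and any $[w,n] \in \Words$, one has
\[
[w,n](v) = \begin{cases} v\circ w & \text{if } N = n,\\ 0 & \text{otherwise.}\end{cases}
\]
In particular, when $v = 123\cdots N$ the nonzero case produces exactly $w$. This means that evaluating on the single word $123\cdots N$ sees only the ``degree $N$'' part of any element of $\bfW$, and on that part it is the identification $[w,N] \mapsto w$.

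Next I would prove linear independence. Suppose $\sum_{(w,n)} c_{w,n}\, [w,n] = 0$ is a finite relation in $\End(\Shuffle)$. Fixing $N \in \NN$ and applying both sides to $123\cdots N \in \Shuffle$, the observation above collapses the sum to
\[
\sum_{w:\, \max(w) \leq N} c_{w,N}\, w \;=\; 0 \quad \text{in } \Shuffle.
\]
Because the set of words is a $\kk$-basis of $\Shuffle$, every coefficient $c_{w,N}$ vanishes. Since $N$ was arbitrary, all $c_{w,n} = 0$, so $\Words$ is linearly independent.

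By definition $\bfW$ is the span of $\Words$, hence $\Words$ is a basis of $\bfW$, and similarly $\Words_n$ is a basis of $\bfW_n$. Because $\Words = \bigsqcup_{n \in \NN} \Words_n$ as sets, the spans satisfy $\bfW = \bigoplus_{n \in \NN} \bfW_n$ as vector spaces. There is really no obstacle here: the only subtlety worth flagging is the need to separate different values of $n$ in the linear relation, and the word $123\cdots N$ does exactly that job thanks to Lemma~\ref{obv-lem}.
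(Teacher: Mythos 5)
Your proof is correct and matches the paper's approach: the paper also derives linear independence of $\Words$ directly from Lemma~\ref{obv-lem} and the fact that words form a basis of $\Shuffle$. You have simply spelled out the evaluation-at-$123\cdots N$ step that the paper leaves implicit, which is a reasonable expansion of a deliberately terse argument.
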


When $w=w_1w_2\cdots w_m$ is a word and $n \in \NN$,
define $w\uparrow n=(w_1+n)(w_2+n)\dots (w_m+n)$
to be the word formed by incrementing each letter of $w$ by $n$.
When $w^1,w^2,\dots, w^l$ is a finite sequence of words with $\max(w^i) \leq n$ and $a_1,a_2,\dots,a_l \in \kk$, 
let $\left[ \sum_{i } a_i w^i, n \right] = \sum_{i} a_i [w^i,n] \in \bfW_n$.
Finally, define $\nabla_\shuffle : \bfW \otimes \bfW \to \bfW $ to be the linear map with
\be\label{nabla-eq}
\nabla_\shuffle([v,m]\otimes [w,n]) =  [v \shuffle (w\uparrow m), n+m] \in \bfW_{m+n}
\ee
for $[v,m] \in \Words_m$ and $[w,n] \in \Words_n$.
For example,
$\nabla_\shuffle([12,3]\otimes [2,2]) = [125,5] + [152,5] +  [512,5]$.
Note that $v \shuffle (w\uparrow m)$ is the multiplicity-free sum of all words $u$ with $u \cap \{1,2,\dots,m\} = v$
and $u \cap (m + \PP) = w \uparrow m$, where $u\cap S$ is the subword formed by removing all
letters not in $S$.

\begin{proposition}
If $\alpha,\beta \in \bfW$ then
$\nabla_\shuffle(\alpha\otimes\beta) = \alpha * \beta$.
\end{proposition}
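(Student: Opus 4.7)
The plan is to prove the equality $\nabla_\shuffle(\alpha\otimes\beta)=\alpha*\beta$ by a direct computation on basis elements, then extend by bilinearity. Both sides are bilinear in $(\alpha,\beta)$, so it suffices to check the identity when $\alpha=[v,m]$ and $\beta=[w,n]$ lie in $\Words$. Moreover, by Lemma~\ref{obv-lem} and the fact (Corollary~\ref{basis-cor}) that $\Words$ is a basis for $\bfW$, two elements of $\bfW$ agree as linear maps $\Shuffle\to\Shuffle$ if and only if they agree on every word $u$. So the task reduces to showing $([v,m]*[w,n])(u) = [v\shuffle(w\uparrow m),\,m+n](u)$ for every word $u$.

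For the left-hand side, I would unfold the convolution $f*g=\shuffle\circ(f\otimes g)\circ\Delta$. Applying $\Delta$ to $u=u_1u_2\cdots u_k$ gives a sum indexed by splittings $u=u'u''$, and then $([v,m]\otimes[w,n])(u'\otimes u'')$ vanishes unless $|u'|=m$ and $|u''|=n$, which forces $k=m+n$ and picks out the unique splitting at position $m$. The surviving term is
\[
[v,m](u_1\cdots u_m)\otimes [w,n](u_{m+1}\cdots u_{m+n})
= (u_{v_1}\cdots u_{v_{|v|}})\otimes (u_{m+w_1}\cdots u_{m+w_{|w|}}),
\]
using the definitions $[v,m](x)=x\circ v$ and $[w,n](y)=y\circ w$. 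Applying $\shuffle$ then yields $(u_{v_1}\cdots u_{v_{|v|}})\shuffle(u_{m+w_1}\cdots u_{m+w_{|w|}})$.

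For the right-hand side, $[v\shuffle(w\uparrow m),\,m+n](u)$ vanishes unless $|u|=m+n$, in which case it equals $u\circ(v\shuffle(w\uparrow m))$, interpreted by linearity: if $v\shuffle(w\uparrow m)=\sum_I \shuffle_I(v,w\uparrow m)$ with the sum running over $I\subset\{1,\dots,m+n\}$ of size $|v|$, then $u\circ\shuffle_I(v,w\uparrow m)$ is the word of length $|v|+|w|$ whose letters at positions in $I$ spell out $u_{v_1}\cdots u_{v_{|v|}}$ and whose letters at positions in $I^c$ spell out $u_{(w\uparrow m)_1}\cdots u_{(w\uparrow m)_{|w|}}=u_{m+w_1}\cdots u_{m+w_{|w|}}$. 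This is exactly $\shuffle_I(u_{v_1}\cdots u_{v_{|v|}},\, u_{m+w_1}\cdots u_{m+w_{|w|}})$. Summing over $I$ matches the left-hand side.

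Thus both sides agree on every input $u$, proving the identity. The main bookkeeping obstacle is keeping track of the shift $w\uparrow m$ and making sure that the composition $u\circ z$ for a shuffled word $z$ distributes correctly across a shuffle; once one verifies that index-wise, the two computations coincide term by term with no multiplicity discrepancy. No deeper structural fact is needed beyond the definitions of $\Delta$, $\shuffle$, and $[w,n]$.
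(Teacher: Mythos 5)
Your proof is correct and follows essentially the same route as the paper: reduce to basis elements, evaluate both sides on an arbitrary word $u$, and observe that the only surviving term of $\Delta(u)$ is the split at position $m$, after which the two sides match shuffle-term by shuffle-term. (One small slip: the shuffle $v\shuffle(w\uparrow m)$ is a sum over $I\subset\{1,\dots,|v|+|w|\}$, not $I\subset\{1,\dots,m+n\}$; your later reference to the word having length $|v|+|w|$ shows you meant the former.)
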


Thus, $\nabla_\shuffle$ is associative and $\bfW$ is a subalgebra of the convolution algebra $\End(\Shuffle)$.

\begin{proof}
Let $[v,m] \in \Words_m$ and $[w,n] \in \Words_n$.
If $u=u_1u_2\cdots u_l$ is a word then $([v,m] * [w,n])(u) $
is the sum over $i \in \{0,1,\dots,l\}$ of $[v,m](u_1u_2\cdots u_i) \shuffle [w,n](u_{i+1}u_{i+2}\cdots u_l)$,
which is precisely
$(u\circ v) \shuffle (u \circ (w\uparrow m))= [v \shuffle (w\uparrow m)), n+m] (u)$
if $l=m+n$ and zero otherwise.
\end{proof}

%

The coalgebra structure of $\Shuffle$ induces a coalgebra structure on $\bfW$.
For $n \in \NN$, let $\rho_n : \Shuffle \to \bfW_n$ be the surjective linear map with $\rho_n(w) = [w,n]$ if $w$ is a word with $\max(w) \leq n$ and with $\rho_n(w) = 0$ if $\max(w) > n$.
Write $\epsilon $ and $\Delta$ for the counit and coproduct of $\Shuffle$.

\begin{lemma}\label{coideal-lem}
For each $n \in \NN$, it holds that $\ker (\rho_n) \subset \ker (\epsilon) \cap \ker\( (\rho_n \otimes \rho_n) \circ \Delta\)$.
\end{lemma}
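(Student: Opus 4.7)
The plan is to first describe $\ker(\rho_n)$ explicitly in terms of the word basis, and then check the two containments on this spanning set. Since $\rho_n$ sends each basis word $w \in \Shuffle$ either to a basis element $[w,n]$ of $\bfW_n$ (when $\max(w) \leq n$) or to $0$ (when $\max(w) > n$), and distinct basis words in the first case map to distinct basis elements by Lemma~\ref{obv-lem}, the kernel $\ker(\rho_n)$ is precisely the $\kk$-span of the set
\[
\{\, w \in \Shuffle : w \text{ is a word with } \max(w) > n \,\}.
\]
It therefore suffices to verify the two containments on each such basis word $w$.

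For the containment $\ker(\rho_n) \subset \ker(\epsilon)$, I would simply observe that any word $w$ with $\max(w) > n \geq 0$ is nonempty, and $\epsilon$ vanishes on every nonempty word by definition, so $\epsilon(w) = 0$.

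For the containment $\ker(\rho_n) \subset \ker\bigl((\rho_n \otimes \rho_n) \circ \Delta\bigr)$, let $w = w_1 w_2 \cdots w_m$ be a word with $\max(w) > n$. By definition of $\Delta$,
\[
\Delta(w) = \sum_{i=0}^{m} w_1\cdots w_i \otimes w_{i+1}\cdots w_m.
\]
The key observation is that for every split index $i$, we have $\max(w) = \max\bigl(\max(w_1 \cdots w_i),\, \max(w_{i+1} \cdots w_m)\bigr)$, so at least one of the two factors has max exceeding $n$. Consequently $\rho_n$ annihilates that factor, making the corresponding tensor summand in $(\rho_n \otimes \rho_n)(\Delta(w))$ vanish; summing, we obtain $0$.

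No step here poses real difficulty; the only thing to be careful about is making sure the case $w = \emptyset$ is correctly excluded from $\ker(\rho_n)$ (it is, since $\max(\emptyset) = 0 \leq n$, so $\emptyset \notin \ker(\rho_n)$), which is what lets the first containment go through without issue.
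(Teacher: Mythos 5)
Your proof is correct and takes essentially the same approach as the paper: identify $\ker(\rho_n)$ as the span of words $w$ with $\max(w) > n$, note such words are nonempty (giving the $\ker(\epsilon)$ containment), and observe that in each term of $\Delta(w)$ at least one tensor factor still has max exceeding $n$ and is therefore killed by $\rho_n$. The paper states this more tersely, but the argument is the same.
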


\begin{proof}
A basis for $\ker \rho_n$ is the set of words $w \in \Shuffle$ with $\max(w) > n$.
Such a word $w=w_1w_2\cdots w_m$ is nonempty and  
$\max(w_1\cdots w_i)> n$ or $\max(w_{i+1}\cdots w_m)> n$ for each $0\leq i \leq m$. 
\end{proof}

It follows that there are unique linear maps $\epsilon_\odot : \bfW \to \kk$ and $\Delta_\odot : \bfW \to \bfW \otimes \bfW$
satisfying 
$\epsilon_\odot(\rho_n(w)) = \epsilon(w)$ and $ \Delta_\odot(\rho_n(w)) = (\rho_n \otimes \rho_n)(\Delta(w))$
for each word $w $ and integer $n \in \NN$.
If $[w,n] \in \Words_n$ and $w=w_1w_2\cdots w_m$, then these maps have the explicit formulas
\be
\label{odot-maps}
 \epsilon_\odot([w,n]) = \begin{cases} 
1 &\text{if } w = \emptyset \\
0 &\text{otherwise}
\end{cases}
\quand
\Delta_{\odot}([w,n]) = \sum_{i=0}^m [w_1\cdots w_i, n] \otimes [w_{i+1}\cdots w_m,n].
\ee
Write $\iota_\shuffle $ for the linear map $\kk \to \bfW$ with 
$
\iota_\shuffle(1) = [\emptyset, 0].
$
We consider $\bfW$ to be a graded vector space in which $[w,n] \in \Words_n$ is homogeneous with degree $\ell(w)$, the length of the word $w$.

\begin{theorem}\label{w-thm}
$(\bfW, \nabla_\shuffle, \iota_\shuffle, \Delta_\odot, \epsilon_\odot)$ is a graded bialgebra, but not a Hopf algebra.
\end{theorem}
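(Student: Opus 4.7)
The plan is to verify the coalgebra axioms, the bialgebra compatibility, and the grading, then exhibit an obstruction to the antipode axiom in degree zero.

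For the coalgebra structure, Lemma~\ref{coideal-lem} ensures that the formulas $\epsilon_\odot \circ \rho_n = \epsilon$ and $\Delta_\odot \circ \rho_n = (\rho_n \otimes \rho_n) \circ \Delta$ unambiguously define linear maps on $\bfW$. Coassociativity and counitality on $\bfW$ then descend from those on $\Shuffle$: when composed with $\rho_n$, both sides of the relevant diagrams are images under $(\rho_n \otimes \rho_n \otimes \rho_n)$ (respectively $\rho_n$) of the corresponding iterated coproduct (respectively counit) identities for $\Shuffle$, which agree there; since $\rho_n$ surjects onto $\bfW_n$ and $\bfW = \bigoplus_n \bfW_n$, the identities transfer. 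The grading is immediate: $\nabla_\shuffle([v,m]\otimes[w,n])$ is supported in length $\ell(v)+\ell(w)$, $\Delta_\odot([w,n])$ lies in tensor degree $\ell(w)$, the unit $[\emptyset,0]$ has degree zero, and $\epsilon_\odot$ vanishes outside degree zero.

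The nontrivial bialgebra compatibility is $\Delta_\odot \circ \nabla_\shuffle = (\nabla_\shuffle \otimes \nabla_\shuffle) \circ (\id \otimes \beta \otimes \id) \circ (\Delta_\odot \otimes \Delta_\odot)$. Applied to $[v,m]\otimes[w,n]$, the left side deconcatenates each shuffle of $v$ with $w\uparrow m$ at every splitting point, while the right side independently splits $v$ and $w$ at all positions and then shuffles the resulting prefix and suffix pairs (with the appropriate shifts). These agree via the classical bijection identifying a deconcatenated shuffle with a shuffle of compatible deconcatenations; the shift $\uparrow m$ is transparent to splitting $w$ and therefore causes no interference. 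The remaining checks---multiplicativity of $\epsilon_\odot$, comultiplicativity of $\iota_\shuffle$, and $\epsilon_\odot \circ \iota_\shuffle = \id_\kk$---reduce to one-line verifications on basis elements via \eqref{nabla-eq} and \eqref{odot-maps}.

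For the failure of the antipode, each $[\emptyset,n]$ for $n \in \NN$ is group-like, since $\Delta_\odot([\emptyset,n]) = [\emptyset,n]\otimes[\emptyset,n]$ and $\epsilon_\odot([\emptyset,n]) = 1$. In any Hopf algebra, the group-like elements form a group under the product, with the antipode providing inverses. Here, however, $\nabla_\shuffle([\emptyset,m]\otimes[\emptyset,n]) = [\emptyset,m+n]$, so the group-likes of $\bfW$ generate the submonoid $(\NN,+)$, in which $[\emptyset,1]$ has no inverse. Concretely, for any $\alpha = \sum_i c_i [w_i, n_i] \in \bfW$, equation \eqref{nabla-eq} gives $\nabla_\shuffle(\alpha \otimes [\emptyset,1]) = \sum_i c_i [w_i, n_i+1]$, whose second indices are all $\geq 1$, so this can never equal $[\emptyset,0] = (\iota_\shuffle \circ \epsilon_\odot)([\emptyset,1])$, the value that $\nabla_\shuffle(\antipode([\emptyset,1])\otimes[\emptyset,1])$ would have to take. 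The main obstacle is the product-coproduct compatibility; once the shuffle-deconcatenation bijection is invoked, every other verification is bookkeeping or an immediate consequence.
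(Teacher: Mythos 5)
Your proposal is correct and follows essentially the same route as the paper: transfer the coalgebra structure from $\Shuffle$ via Lemma~\ref{coideal-lem}, establish the product--coproduct compatibility by reducing to the (shuffle/deconcatenation) Hopf identity for $\Shuffle$, and kill the antipode by observing that $[\emptyset,1]$ cannot have a convolution inverse because $\nabla_\shuffle(\,\cdot\,\otimes[\emptyset,1])$ only lands in $\bigoplus_{n\geq 1}\bfW_n$, which misses $[\emptyset,0]$. The only stylistic departure is that you package the last step as a group-like argument --- $[\emptyset,n]$ is group-like and its image under any would-be antipode would be a two-sided inverse, but $(\{[\emptyset,n]\}_{n\geq 0},\nabla_\shuffle)\cong(\NN,+)$ has no inverse for $[\emptyset,1]$ --- which is a clean conceptual framing of exactly the same grading obstruction the paper exhibits directly.
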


\begin{proof}
 Lemma~\ref{obv-lem} implies that $(\bfW, \nabla_\shuffle, \iota_\shuffle)$ is a graded algebra.
For each $n \in \NN$,
the subspace $\ker \rho_n$ is a coideal of
$(\Shuffle,\Delta,\epsilon)$ by Lemma~\ref{coideal-lem}, and 
 $(\bfW_n,\Delta_\odot,\epsilon_\odot)$ is the graded coalgebra obtained by transferring the structure maps of the quotient
$\Shuffle / \ker (\rho_n)$
via the map $\rho_n$. 
Direct sums of coalgebras are coalgebras, so $(\bfW,\Delta_\odot,\epsilon_\odot)$ is a graded coalgebra.

We have $\epsilon_\odot \circ \iota_\shuffle = \id$,
and 
the counit (respectively, unit) is obviously an algebra (respectively, coalgebra) morphism.
Let $\beta$ denote the standard isomorphism $U\otimes V \xrightarrow{\sim} V\otimes U$
 and write $\nabla$ for the product $v\otimes w \mapsto v\shuffle w$ of $\Shuffle$. 
Let $[v,m],[w,n] \in \Words$. It follows from the definitions that
\[\label{compat1} 
\ba
\Delta_\odot \circ \nabla_\shuffle([v,m]\otimes [w,n]) &=
 \Delta_\odot \circ \rho_{m+n} \circ \nabla(v\otimes (w\uparrow m)) =
(\rho_{m+n}) ^{\otimes2} \circ \Delta\circ \nabla( v \otimes (w\uparrow m)).
\ea
\]
Similarly, we have
\[\ba
(\nabla_\shuffle)^{\otimes2} \circ (\id \otimes \beta \otimes \id)\circ  &(\Delta_\odot )^{\otimes2} ([v,m] \otimes [w,n])
\\&=(\nabla_\shuffle)^{\otimes2}  \circ (\id \otimes \beta \otimes \id)\circ (\rho_{m} \otimes \rho_m\otimes \rho_{n} \otimes \rho_n) \circ \Delta^{\otimes 2}(v \otimes w)
\\&
=
(\rho_{m+n})^{\otimes2} \circ \nabla^{\otimes2}\circ (\id \otimes \beta \otimes \id)\circ \Delta^{\otimes2} ( v \otimes (w \uparrow m)).
\ea
\]
It holds that $\Delta\circ \nabla = \nabla^{\otimes2}\circ (\id \otimes \beta \otimes \id)\circ \Delta^{\otimes2} $
since $\Shuffle$ is a Hopf algebra, so the final expressions in the two equations are equal.
Thus $\Delta$ is an algebra morphism so
  $(\bfW, \nabla_\shuffle, \iota_\shuffle, \Delta_\odot, \epsilon_\odot)$ is a graded bialgebra.

This bialgebra is not a Hopf algebra since if
 $\antipode : \bfW \to \bfW$ is a linear map and $n \in \PP$, then
$\nabla_\shuffle \circ (\id \otimes \antipode) \circ \Delta_\odot([\emptyset,n])
= \nabla_{\shuffle}([\emptyset,n]\otimes \antipode([\emptyset,n]))\in \bigoplus_{m \in \NN} \bfW_{m+n}$,
which means that
\[\nabla_\shuffle \circ (\id \otimes \antipode) \circ \Delta_\odot([\emptyset,n])\neq \iota_\shuffle \circ \epsilon_\odot ([\emptyset,n]) = [\emptyset,0] \in \bfW_0.\]
The map  $ \iota_\shuffle\circ \epsilon_\odot  $
is therefore not invertible in the convolution algebra $\End(\bfW)$.
\end{proof}

\begin{corollary}\label{alg-hom-cor}
When $\ch(\kk) = 0$, the linear map $ \bfW \to \kk$ with $[w,n]\mapsto \frac{1}{\ell(w)!}$ for $[w,n] \in \Words$ is an algebra morphism.
\end{corollary}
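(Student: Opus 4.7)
The plan is to verify directly that the map $\phi : \bfW \to \kk$ defined by $\phi([w,n]) = 1/\ell(w)!$ respects the unit and product of $\bfW$. That $\phi$ is well-defined as a linear map follows from Corollary~\ref{basis-cor}, since $\Words$ is a basis and the assignment depends only on the length of the underlying word; the characteristic zero hypothesis is used only to make $\ell(w)!$ invertible in $\kk$. The unit axiom is immediate: $\phi(\iota_\shuffle(1)) = \phi([\emptyset,0]) = 1/0! = 1$.

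For the product, I would fix $[v,m],[w,n] \in \Words$ with $m' = \ell(v)$ and $n' = \ell(w)$, and compute both sides of the asserted identity using the explicit formula \eqref{nabla-eq}. The right-hand side is simply $\phi([v,m])\,\phi([w,n]) = \tfrac{1}{m'!\, n'!}$. The left-hand side is $\phi([v \shuffle (w\uparrow m),\, m+n])$, so I need to expand $v \shuffle (w\uparrow m)$ as a sum of words and apply $\phi$ termwise.

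The key observation is the one already highlighted in the paragraph following \eqref{nabla-eq}: since the letters of $v$ lie in $\{1,2,\dots,m\}$ while the letters of $w\uparrow m$ lie in $\{m+1,m+2,\dots\}$, the two alphabets are disjoint, so each shuffled word $u = \shuffle_I(v, w\uparrow m)$ is uniquely recovered from the subset $I \subset \{1,2,\dots,m'+n'\}$ of positions. Hence $v \shuffle (w\uparrow m)$ is a \emph{multiplicity-free} sum of exactly $\binom{m'+n'}{m'}$ words, each of length $m'+n'$. Applying $\phi$ termwise and summing then gives
\[
\phi\bigl(\nabla_\shuffle([v,m]\otimes[w,n])\bigr) \;=\; \binom{m'+n'}{m'} \cdot \frac{1}{(m'+n')!} \;=\; \frac{1}{m'!\, n'!},
\]
which matches $\phi([v,m])\,\phi([w,n])$, completing the verification.

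There is no real obstacle here; the entire content of the corollary reduces to the elementary binomial identity $\binom{m'+n'}{m'}/(m'+n')! = 1/(m'!\, n'!)$. The only point worth emphasizing is the multiplicity-freeness of the shuffle, which is what makes the counting work cleanly and which is precisely why the combinatorial formula \eqref{nabla-eq} for $\nabla_\shuffle$ yields an algebra map into $\kk$ via normalization by $1/\ell(w)!$.
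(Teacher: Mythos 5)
Your proof is correct and matches the paper's one-line verification exactly: both compute $\phi\circ\nabla_\shuffle([v,m]\otimes[w,n]) = \binom{\ell(v)+\ell(w)}{\ell(v)}\cdot\frac{1}{(\ell(v)+\ell(w))!} = \frac{1}{\ell(v)!\,\ell(w)!}$. One small caveat: the multiplicity-freeness of $v\shuffle(w\uparrow m)$ is not actually needed for the count, since the shuffle is defined as a sum over the $\binom{\ell(v)+\ell(w)}{\ell(v)}$ subsets $I$ and each term has length $\ell(v)+\ell(w)$, so the total coefficient sum would be $\binom{\ell(v)+\ell(w)}{\ell(v)}$ with or without collisions.
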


\begin{proof}
If $\lambda$ is this map, then $\lambda\circ\nabla_\shuffle([v,m]\otimes[w,n]) = \binom{\ell(v)+\ell(w)}{\ell(v)} \frac{1}{(\ell(v) + \ell(w))!} = \lambda([v,m])\lambda([w,n])$.
\end{proof}

The bialgebra $\bfW$ has a well-known quotient.
Let $w=w_1w_2\cdots w_n$ be a word. Suppose the set $S = \{w_1,w_2,\dots,w_n\}$ 
has $m$ distinct elements. 
If $\phi $ is the unique order-preserving bijection $S \to \{1,2,\dots,m\}$,
then we define
 $\st(w) = \phi(w_1)\phi(w_2)\cdots \phi(w_n)$.

A \emph{packed word} is a word $w$ with $w = \st(w)$.
Such a word is just a surjective map $\{1,2,\dots,n\}\to\{1,2,\dots,m\}$ for some $m,n \in \NN$.
Packed words are also referred to in the literature as
\emph{surjective words} \cite{Hazewinkel1}, \emph{Fubini words} \cite{Brends}, and \emph{initial words} \cite{PylPat}. 
Define $\bfI_\P$ to be the subspace of $\bfW$ spanned by all differences $[v,m] - [w,n]$ 
where $[v,m],[w,n] \in \Words$ have $\st(v) = \st(w)$.

\begin{proposition}\label{packed-prop}
The subspace $\bfI_\P$ is a homogeneous bi-ideal of $(\bfW, \nabla_\shuffle, \iota_\shuffle, \Delta_\odot, \epsilon_\odot)$.
The quotient bialgebra $\bfP = \bfW /\bfI_\P$ is a graded Hopf algebra.
\end{proposition}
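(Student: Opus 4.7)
The plan has three parts: (1) verify $\bfI_\P$ is a homogeneous bi-ideal; (2) conclude $\bfP = \bfW/\bfI_\P$ is a graded bialgebra; (3) upgrade this to a Hopf algebra via connectedness.

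For (1), the homogeneity is immediate: if $\st(v)=\st(w)$ then $\ell(v)=\ell(w)$, so the generator $[v,m]-[w,n]$ of $\bfI_\P$ lies in $\bfW_{\ell(v)}$. To show $\bfI_\P$ is a two-sided ideal, I compute
\[
\nabla_\shuffle([u,k]\otimes ([v,m]-[w,n])) = [u\shuffle (v\uparrow k), k+m] - [u\shuffle (w\uparrow k), k+n]
\]
and expand each shuffle as the multiplicity-free sum over subsets $I\subset\{1,\dots,k+\ell(v)\}$ with $|I|=k$. The crucial observation is that since $\max(u)\le k$ while all letters of $v\uparrow k$ and $w\uparrow k$ are strictly greater than $k$, in the word $\shuffle_I(u,v\uparrow k)$ every letter at a position in $I$ is below every letter at a position in $I^c$. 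Consequently $\st$ factors through the pattern of $u$ on $I$ and the pattern of $v\uparrow k$ (equivalently, of $v$) on $I^c$. Since $\st(v)=\st(w)$, this gives $\st(\shuffle_I(u,v\uparrow k))=\st(\shuffle_I(u,w\uparrow k))$, so each paired difference lies in $\bfI_\P$. The argument for multiplication on the right is symmetric.

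For the coideal property, $\epsilon_\odot([v,m])=\epsilon_\odot([w,n])$ holds because $\epsilon_\odot$ detects only emptiness, which is preserved by $\st$. For the coproduct, I use the telescoping identity $a\otimes b - c\otimes d = (a-c)\otimes b + c\otimes (b-d)$ on each summand: writing $\ell=\ell(v)=\ell(w)$, the difference $\Delta_\odot([v,m])-\Delta_\odot([w,n])$ equals
\[
\sum_{i=0}^{\ell}\bigl([v_1\cdots v_i,m]-[w_1\cdots w_i,n]\bigr)\otimes [v_{i+1}\cdots v_\ell,m] + [w_1\cdots w_i,n]\otimes\bigl([v_{i+1}\cdots v_\ell,m]-[w_{i+1}\cdots w_\ell,n]\bigr).
\]
Because $\st$ is defined by a relative-order bijection, $\st(v)=\st(w)$ implies $\st(v_1\cdots v_i)=\st(w_1\cdots w_i)$ and $\st(v_{i+1}\cdots v_\ell)=\st(w_{i+1}\cdots w_\ell)$ for every $i$, so each term lies in $\bfI_\P\otimes\bfW + \bfW\otimes\bfI_\P$. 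This proves $\bfI_\P$ is a bi-ideal, and step (2) is the standard fact that quotients by homogeneous bi-ideals inherit graded bialgebra structure.

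For (3), I verify that $\bfP$ is connected graded. The space $\bfW_0$ is spanned by $\{[\emptyset,n]:n\in\NN\}$, but all empty words share the common standardization $\emptyset$, so $[\emptyset,m]-[\emptyset,n]\in\bfI_\P$ for all $m,n$. Thus $\bfP_0\cong\kk$, and by the standard recursive construction (see, e.g., \cite{ReinerNotes}), every connected graded bialgebra admits a unique antipode, making $\bfP$ a Hopf algebra.

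The main obstacle is verifying the ideal property. The point that makes it work is the interaction between the shift $v\mapsto v\uparrow k$ and the bound $\max(u)\le k$: this forces the letters coming from $u$ and those coming from $v\uparrow k$ to live in disjoint order-intervals, so $\st$ of a shuffle depends only on $u$ and on $\st(v)$, never on $v$ itself. Once this is checked, the coideal and connectedness arguments are routine.
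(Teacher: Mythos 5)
Your proof is correct and follows the same approach as the paper, which simply asserts that $\bfI_\P$ is ``easy to check'' to be a bi-ideal and then invokes the standard fact that connected graded bialgebras are Hopf algebras. You have filled in those details accurately: the disjoint-order-interval observation for the ideal property, the telescoping identity for the coideal property, and the identification $\bfP_0 \cong \kk$ for connectedness are all the right verifications.
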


$\bfP $ is 
the graded dual of the Hopf algebra of \emph{word quasi-symmetric functions} $\WQSym$ \cite{NovelliThibon}.

\begin{proof}
The subspace $\bfI_\P$ is homogeneous since two words $v$ and $w$ must have the same length if 
$\st(v) = \st(w)$. It is easy to check that $\bfI_\P$ is a bi-ideal.
The quotient bialgebra $\bfP = \bfW /\bfI_\P$ is connected and therefore a graded Hopf algebra \cite[\S2.3.2]{AguiarMahajan}.
\end{proof}

Let $\Packed$ be the set of all packed words. 
If $[w,n] \in \Words$ and $w$ is a word with $m$ distinct letters then $v = \st(w)$
is the unique packed word such that $[w,n] + \bfI_\P = [v,m] + \bfI_\P$.
Identify $v \in \Packed$
with the coset $[v,m] + \bfI_\P$ so that we can view $\Packed$ as a basis for $\bfP$.
It is not hard to work out formulas for the product and coproduct of $\bfP$ in the basis $\Packed$, but we omit these details here.
The subspace of $\bfP$ spanned by the 
words in $\Packed$ that have no repeated letters
is a Hopf subalgebra, namely, the \emph{Malvenuto-Poirier-Reutenaurer Hopf algebra} of permutations \cite{AguiarSottile,Mal2},
sometimes also called the Hopf algebra of \emph{free quasi-symmetric functions} $\FQSym$ \cite{DHT}.

\section{Bialgebras of permutations}\label{perm-sect}

Recall that $S_n$ denotes the group of permutations of $\{1,2,\dots,n\}$.
For each $1\leq i \leq n-1$, let $s_i \in S_n$ be the simple transposition given in cycle notation by $(i,i+1)$.
Then $S_n$ is the finite Coxeter group of type $A_{n-1}$
relative to the generating set $\{s_1,s_2,\dots,s_{n-1}\}$.

A \emph{reduced word} for $\pi \in S_n$ is a word $i_1i_2\cdots i_l$
of minimal length such that $\pi = s_{i_1}s_{i_2}\cdots s_{i_l}$.
Let $\cR(\pi)$ be the set of such words
and define $\ell(\pi)$ to be their common length.
For $n \in \NN$ and $\pi \in S_{n+1}$, define 
$[\pi] = \sum_{w \in \cR(\pi)} [w, n] \in \bfW_n$.
Let $\bfPi_n = \kk\spanning \{ [\pi] : \pi \in S_{n+1}\}$ and $\bfPi = \bigoplus_{n \in \NN} \bfPi_n$.

\begin{theorem}\label{sub-b-thm}
The subspace $\bfPi$ is a graded sub-bialgebra of $(\bfW,\nabla_\shuffle,\iota_\shuffle,\Delta_\odot,\epsilon_\odot)$.
\end{theorem}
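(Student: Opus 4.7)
The plan is to verify directly that $\bfPi$ is closed under each of the structure maps of $\bfW$ given by Theorem~\ref{w-thm} and is a graded subspace, from which being a graded sub-bialgebra follows. The unit $\iota_\shuffle(1) = [\emptyset, 0]$ coincides with $[\mathrm{id}_{S_1}] \in \bfPi$ since $\cR(\mathrm{id}_{S_1}) = \{\emptyset\}$. Every reduced word of $\pi \in S_{n+1}$ has length $\ell(\pi)$, so $[\pi]$ is homogeneous of degree $\ell(\pi)$ in $\bfW$; thus $\bfPi$ is a graded subspace. It remains to check closure under the coproduct $\Delta_\odot$ and product $\nabla_\shuffle$.

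Coproduct closure is the easier half. Using~\eqref{odot-maps},
\[
\Delta_\odot([\pi]) = \sum_{w \in \cR(\pi)} \sum_{i=0}^{\ell(\pi)} [w_1 \cdots w_i, n] \otimes [w_{i+1} \cdots w_{\ell(\pi)}, n].
\]
The standard fact that truncating a reduced word $w \in \cR(\pi)$ at any position $i$ yields reduced words of two permutations $\pi', \pi'' \in S_{n+1}$ satisfying $\pi = \pi' \pi''$ and $\ell(\pi) = \ell(\pi') + \ell(\pi'')$, and conversely that every such factorization arises from exactly $\fkR(\pi')\fkR(\pi'')$ pairs $(w, i)$, collapses the double sum to $\Delta_\odot([\pi]) = \sum_{\pi \overset{\bullet}= \pi' \pi''} [\pi'] \otimes [\pi'']$ in $\bfPi \otimes \bfPi$, which also establishes Theorem~\ref{intro-thm}(b).

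Product closure is the main step. For $\pi' \in S_p$ and $\pi'' \in S_q$, expanding gives
\[
\nabla_\shuffle([\pi'] \otimes [\pi'']) = \sum_{u \in \cR(\pi')} \sum_{v \in \cR(\pi'')} \sum_I [\shuffle_I(u, v \uparrow (p-1)), p+q-2].
\]
Crucially, since $u$ uses letters in $\{1, \dots, p-1\}$ and $v \uparrow (p-1)$ uses letters in $\{p, \dots, p+q-2\}$, any output word $w$ uniquely recovers $u$, $v$, and $I$ as the low subword, the high subword, and the position-set of the low subword. Consequently $[w, p+q-2]$ appears with coefficient $0$ or $1$, the latter precisely when the low subword lies in $\cR(\pi')$ and the high subword lies in $\cR(\pi'') \uparrow (p-1)$. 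Closure then reduces to two combinatorial claims: (i) every such $w$ is itself a reduced word for some permutation $\sigma \in S_{p+q-1}$; and (ii) the property of admitting this ``good'' subword decomposition depends only on the represented permutation $\sigma$, not on the choice of reduced word of $\sigma$.

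I would prove Claim (i) by induction on the total length, invoking the recursive shuffle identity and splitting according to whether the first letter of $w$ comes from $u$ or from $\tilde v = v \uparrow (p-1)$, while verifying at each step that multiplication by the next simple reflection is length-increasing in $S_{p+q-1}$. For Claim (ii), I would apply Matsumoto's theorem: any two reduced words of $\sigma$ are connected by braid and commutation moves. Commutation moves and braid moves among generators from a single alphabet $\{s_1,\dots,s_{p-1}\}$ or $\{s_p,\dots,s_{p+q-2}\}$ plainly preserve the subword decomposition; the only potentially troublesome case is a braid move interchanging the substrings $(p-1)(p)(p-1)$ and $(p)(p-1)(p)$, but the presence of either substring in $w$ forces one of the two subwords to contain consecutive repeated letters and therefore fail to be reduced, so no such move can apply to a good reduced word. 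The main obstacle is completing (ii) by establishing that two good reduced words of $\sigma$ can be joined by a chain of moves remaining inside the set of good reduced words; this connectivity within the subword complex of $\sigma$ is the heart of the combinatorial argument, and once granted, regrouping the product by represented permutation places $\nabla_\shuffle([\pi'] \otimes [\pi''])$ inside $\bfPi$.
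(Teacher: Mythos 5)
Your strategy---using Matsumoto's theorem (the Coxeter word property) to show that the shuffled reduced words partition into full classes $\cR(\sigma)$---is exactly the route the paper takes, but you leave the argument genuinely incomplete and misidentify the remaining step. You show that single-alphabet braid and commutation moves preserve the good-shuffle property and that the cross-alphabet braid move $(p-1)\,p\,(p-1) \leftrightarrow p\,(p-1)\,p$ cannot apply, then declare that what remains is a connectivity statement about the subword complex of $\sigma$. That is not what is needed: the relevant statement is only that the set of good shuffles is closed under every single braid or commutation move, and you have nearly established it. The one case your argument omits is a commutation move swapping adjacent low and high letters $i \le p-1 < j$ with $|i-j| > 1$; this trivially leaves both the low and high subwords unchanged as sequences and so preserves goodness.

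Once that case is included, the set of good shuffles is a union of $\sim$-equivalence classes. Since no good shuffle has two equal adjacent letters (else one of the two extracted subwords would fail to be reduced), the word property directly identifies each such class as $\cR(\sigma)$ for some $\sigma \in S_{p+q-1}$. This observation both resolves your Claim~(ii) and makes your Claim~(i) unnecessary: there is no need for a separate induction proving each shuffle is reduced, because a $\sim$-class free of adjacent repeats \emph{is} some $\cR(\sigma)$ by the word property. Your proposed induction for (i) is in any case only a vague sketch whose inductive step---verifying that applying the next simple reflection is length-increasing---is not justified and would require additional work. The paper's proof of the product closure is, in effect, the short completion you were reaching for.
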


This is a special case of more general results in \cite[\S5]{M2}. We include a self-contained proof.

\begin{proof}
The \emph{word property} for Coxeter groups \cite[Theorem 3.3.1]{CCG} asserts that
for each $\pi \in S_n$,  the set $\cR(\pi)$ is  an equivalence class
under
the strongest relation with
$vw\sim v'w'$ whenever $v\sim v'$ and $w\sim w'$,
such that 
$ij  \sim  ji $
and
$  i(i+1)i \sim  (i+1)i(i+1) 
$
for $i,j \in \PP$ with $|j-i| > 1$.
Moreover, an equivalence class under this relation is equal to $\cR(\pi)$ for some permutation $\pi$
if and only if it contains no words with adjacent repeated letters.
It is clear from these observations that the coproduct $\Delta_\odot$
satisfies $\Delta_\odot(\bfPi) \subset \bfPi \otimes \bfPi$ and has the formula in Corollary~\ref{coprod-cor}.

The unit element of $\bfW$ is $[\emptyset,0] = [\pi]$ for $\pi = 1 \in S_1$.
To show that $\nabla_\shuffle(\bfPi\otimes \bfPi) \subset \bfPi$,
let $\pi' \in S_{m+1}$ and $\pi'' \in S_{n+1}$. No word formed by 
shuffling $v$ and $w\uparrow m$ for  $v \in \cR(\pi')$
and $w \in \cR(\pi'')$
contains $m(m+1)m$ or $(m+1)m(m+1)$ as a consecutive subword,
since this would imply that two adjacent letters of $v$ or $w$ are equal.
It follows that the set of all words formed by shuffling $v$ and $w\uparrow m$ 
for some 
$v \in \cR(\pi')$
and $w \in \cR(\pi'')$
is a union of $\sim$-equivalence classes. No words $u$ in this set may contain adjacent repeated letters 
since $u \cap \{1,2,\dots,m\} \in \cR(\pi')$ and $u \cap (m + \PP) \in \cR(\pi'')$,
so $\nabla_\shuffle([\pi']\otimes[\pi'']) \in \bfPi$.
Each $[\pi]$ is homogeneous of degree $\ell(\pi)$, so $\bfPi$ is a graded sub-bialgebra.
\end{proof}

If $\pi \in S_n$ then $\ell(\pi)$ 
may be computed as the number of integer pairs $(i,j)$ with $1\leq i<j \leq n$ and $\pi(i) > \pi(j)$.
We write $\pi \overset{\bullet}= \pi'\pi''$ if $\pi,\pi',\pi'' \in S_n$ and $\pi = \pi'\pi''$
and $\ell(\pi) = \ell(\pi') + \ell(\pi'')$.

\begin{corollary}\label{coprod-cor}
If $\pi \in S_n$ then 
$\ds
\Delta_{\odot}([\pi]) = \sum_{\pi \overset{\bullet}=\pi'\pi''} [\pi']\otimes [\pi'']
$
and
$\epsilon_\odot([\pi]) = \begin{cases} 1 & \text{if }\ell(\pi)=0 \\ 0 & \text{if }\ell(\pi) \neq 0. \end{cases}$
\end{corollary}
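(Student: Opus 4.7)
The plan is to expand $\Delta_\odot([\pi])$ using the definition $[\pi] = \sum_{w \in \cR(\pi)} [w,n]$ together with the explicit formula for $\Delta_\odot$ on basis elements given in \eqref{odot-maps}, and then regroup the resulting double sum by the bijection described below. Setting $\ell = \ell(\pi)$, linearity gives
\[
\Delta_\odot([\pi]) \;=\; \sum_{w \in \cR(\pi)} \sum_{i=0}^{\ell} [w_1\cdots w_i,\, n] \,\otimes\, [w_{i+1}\cdots w_\ell,\, n].
\]

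The main combinatorial step is a reindexing of this double sum. For any $w \in \cR(\pi)$ and any $0 \le i \le \ell$, the prefix $v := w_1\cdots w_i$ and suffix $u := w_{i+1}\cdots w_\ell$ are reduced words for some permutations $\pi'$ and $\pi''$. Since $\pi = s_{w_1}\cdots s_{w_\ell} = \pi'\pi''$ with $\ell(\pi') + \ell(\pi'') = i + (\ell - i) = \ell(\pi)$, the factorization $\pi \overset{\bullet}= \pi'\pi''$ is length-additive. Conversely, whenever $\pi \overset{\bullet}= \pi'\pi''$, $v \in \cR(\pi')$, and $u \in \cR(\pi'')$, the concatenation $vu$ has length $\ell(\pi') + \ell(\pi'') = \ell(\pi)$ and expresses $\pi$ as a product of simple reflections of minimal length, so $vu \in \cR(\pi)$. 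These two observations exhibit a bijection
\[
\{(w,i) : w \in \cR(\pi),\ 0 \le i \le \ell\} \,\longleftrightarrow\, \{(\pi',\pi'',v,u) : \pi \overset{\bullet}= \pi'\pi'',\ v \in \cR(\pi'),\ u \in \cR(\pi'')\},
\]
under which $(w,i)$ corresponds to $(\pi',\pi'',v,u)$ with $v = w_1\cdots w_i$ and $u = w_{i+1}\cdots w_\ell$. Reindexing the double sum along this bijection yields
\[
\Delta_\odot([\pi]) \;=\; \sum_{\pi \overset{\bullet}= \pi'\pi''} \Bigl(\sum_{v \in \cR(\pi')} [v,n]\Bigr) \otimes \Bigl(\sum_{u \in \cR(\pi'')} [u,n]\Bigr) \;=\; \sum_{\pi \overset{\bullet}= \pi'\pi''} [\pi'] \otimes [\pi''],
\]
as desired.

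For the counit, I would write $\epsilon_\odot([\pi]) = \sum_{w \in \cR(\pi)} \epsilon_\odot([w,n])$ and apply \eqref{odot-maps}, which evaluates each summand to $1$ if $w = \emptyset$ and $0$ otherwise. The empty word lies in $\cR(\pi)$ precisely when $\pi$ is the identity of some $S_{n+1}$, i.e.\ when $\ell(\pi) = 0$, in which case the sum equals $1$.

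There is no serious obstacle here. The only content beyond unwinding definitions is the bijection between split reduced words and length-additive factorizations paired with reduced words of each factor, which is an immediate consequence of the fact that $vu \in \cR(\pi'\pi'')$ if and only if $\ell(\pi'\pi'') = \ell(\pi') + \ell(\pi'')$. This is essentially the observation already used implicitly in the proof of Theorem~\ref{sub-b-thm} to show $\Delta_\odot(\bfPi) \subset \bfPi \otimes \bfPi$.
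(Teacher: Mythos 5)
Your proof is correct. It follows essentially the same route as the paper, which folds the coproduct formula into the proof of Theorem~\ref{sub-b-thm}: there the author invokes the word property for Coxeter groups and then declares the formula ``clear from these observations,'' whereas you spell out the underlying bijection between split points of reduced words and length-additive factorizations equipped with reduced words of each factor. Your version is in fact slightly more elementary, since it only uses the subadditivity of the Coxeter length function and the fact that prefixes and suffixes of reduced words are reduced, and does not actually require the Matsumoto--Tits word property at all; but the combinatorial content is the same, and the counit computation is identical.
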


The product of $\bfPi$ takes more work to describe.
In the following definitions,
we consider $S_m$ and $S_n$ to be disjoint for all $m\neq n$.
We 
represent elements of  $S_n$ in \emph{one-line notation}, that is, by writing
 the word
$\pi_1\pi_2\cdots \pi_n$ to mean the permutation $\pi \in S_n$ with $\pi(i) = \pi_i$.

\begin{definition}
Suppose $a=a_1a_2\cdots a_k$ and $b=b_1b_2\cdots b_l$ are words with no repeated letters.
Let $A = \{a_1,a_2,\dots,a_k\}$ and $B = \{b_1,b_2,\dots,b_l\}$ and $n=k+l$,
and assume $A$ and $B$ are both contained in $\{1,2,\dots,n\}$. 
Write $\phi$ and $\psi$ for the order-preserving bijections
$A \to \{1,2,\dots,n\}- B$ and $B \to \{1,2,\dots,n\} - A$. Finally, define
$a \dfslash b = a \psi(b) \in S_n$
and
$a\dbslash b = \phi(a) b \in S_n$.
\end{definition}

For example, $a\dfslash \emptyset = \emptyset\dfslash a = a$ and
$1357\dfslash 6543= 13578642
$
and
$1357 \dbslash 6543 = 12786543.
$

\begin{definition}\label{a-shuffle-def}
Let $u \in S_{m+1}$ and $v \in S_{n+1}$. Set $w_i = v_i +m$ 
and suppose $j$ and $k$
are the indices with $u_j = w_{k+1}= m+1$.
Define a subset 
$\cS_\shuffle(u,v) \subset S_{m+n+1}$ 
inductively as follows:
\begin{itemize}
\item[(a)] If $u_{m+1} = m+1$ then let $\cS_\shuffle(u,v) = \{ u_1u_2\cdots u_{m} w_1w_2\cdots w_{n+1}\}$.

\item[(b)] If $w_1 = m+1$ then let $\cS_\shuffle(u,v) = \{ u_1u_2\cdots u_mu_{m+1} w_2\cdots w_{n+1}\}.$

\item[(c)] Otherwise,
define
$\tilde u = \st(u_{j+1}u_{j+2}\cdots u_{m+1}) $ and
$\tilde v = \st(v_1v_2\cdots v_{k})$
and let
\[
\cS_\shuffle(u,v) = \Bigl\{ 
u_1u_2\cdots u_j \dfslash \sigma : \sigma \in \cS_\shuffle(\tilde u,v)\Bigr\}
\sqcup
\Bigl\{ \sigma \dbslash w_{k+1}w_{k+2}\cdots w_{n+1} : \sigma \in \cS_\shuffle(u,\tilde v)\Bigr\}.
\]
\end{itemize}
\end{definition}

\begin{example}
If $u = 231$ and $v=312$, then $w=534$, $j=k+1=2$,
and $\tilde u = \tilde v = 1$,
so $\cS_\shuffle(\tilde u, v)=  \{312\}$ and $\cS_\shuffle(u,\tilde v) =  \{231\}$
and \[\cS_\shuffle(231,312) = \{ 23 \dfslash 312\} \sqcup \{ 231 \dbslash  34 \} = \{23514, 25134\}.\]
On the other hand, if $u=312$ and $v=231$, then $w=453$, $j=1$ and $k+1=3$,
and $\tilde u = \tilde v = 12$,
so $\cS_\shuffle(\tilde u, v)=  \{1342\}$ and $\cS_\shuffle(u,\tilde v) =  \{3124\}$
and
\[ \cS_\shuffle(312,231) = \{ 3 \dfslash 1342\} \sqcup \{ 3124 \dbslash  3 \} =  \{31452, 41253\}.\]
For a more complicated example, one can check that
\[
\ba
\cS_\shuffle(4213,4132) &= \{ 4217365 , 7213465\}, \\
\cS_\shuffle(4132 ,4213 ) &= 
\{4137526,4157236 , 4172536 , 5137246 ,5172346 , 7132546\}.
\ea
\]
\end{example}

If $p,q \in \PP$ are such that $p+q = n+1$ then
$\cS_\shuffle(p\cdots 321, q\cdots 321)$ is just the set $\cB(p, n)$ defined in the introduction.
The following therefore implies Theorem~\ref{intro-thm}(a):

\begin{theorem}\label{a-shuff-thm}
If $u \in S_{m+1}$ and $v \in S_{n+1}$ then $\nabla_{\shuffle}([u]\otimes [v]) = \sum_{\pi \in \cS_\shuffle(u,v)} [\pi]$.
\end{theorem}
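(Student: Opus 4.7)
The proof will proceed by strong induction on $\ell(u)+\ell(v)$, paralleling the recursive structure of Definition~\ref{a-shuffle-def}. The base case, when $\ell(u)=0$ or $\ell(v)=0$, reduces to an easy verification: the relevant permutation is an identity, Definition~\ref{a-shuffle-def} lands in case (a) or (b), and both sides collapse to a single $[\pi]$ whose reduced words are exactly the trivial shuffles.

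For the inductive step in cases (a) and (b), the key observation is that $u_{m+1}=m+1$ forces $u$ into the parabolic subgroup $\langle s_1,\ldots,s_{m-1}\rangle\subset S_{m+1}$, so every $a\in\cR(u)$ uses only letters in $\{1,\ldots,m-1\}$; these commute with the letters of $b\uparrow m$, which lie in $\{m+1,m+2,\ldots\}$. Every shuffle $a\shuffle(b\uparrow m)$ is then a reduced word for the single length-additive product $\pi=u\cdot v'$ (with $v'$ the shifted copy of $v$), whose one-line notation is $u_1\cdots u_m w_1\cdots w_{n+1}$. Conversely, every reduced word of $\pi$ lies in the parabolic $\langle s_i:i\neq m\rangle$, so it avoids the letter $m$ and uniquely factors as such a shuffle. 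Case (b) is symmetric.

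The essential case is (c). Writing $u_j=m+1$ with $j\leq m$ and $v_{k+1}=1$ with $k\geq 1$, I would first note that every $\pi$ produced by the first summand of the recursion satisfies $\pi(j)=m+1$, while every $\pi$ from the second summand satisfies $\pi(m+k+1)=m+1$; since $j\leq m<m+k+1$, the two branches yield disjoint subsets of $\cS_\shuffle(u,v)$. A tracking argument---based on the observation that $a$-letters move the value $m+1$ only among positions $\{1,\ldots,m+1\}$ and $b\uparrow m$-letters move it only among $\{m+1,\ldots,m+n+1\}$, together with the reduced-word constraint that each right multiplication swaps an increasing pair---then shows that the final position of $m+1$ in any shuffle product is exactly $j$ or $m+k+1$, so the two branches partition all permutations arising from the shuffle product.

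The main obstacle is matching each part of the partition with the inductive sum. For branch (c.1) one must show that the shuffles producing $\pi$ with $\pi(j)=m+1$ contribute exactly $\sum_{\sigma\in\cS_\shuffle(\tilde u,v)}[u_1\cdots u_j\dfslash\sigma]$, where the inductive hypothesis controls $\sum_\sigma[\sigma]=\nabla_\shuffle([\tilde u]\otimes[v])$, and similarly for (c.2). This is not a clean bijection at the level of individual shuffles: $|\cR(u_1\cdots u_j\dfslash\sigma)|$ generally exceeds $|\cR(\sigma)|$, the excess being accounted for by the additional ``crossing'' letters $s_j,s_{j+1},\ldots,s_m$ needed to carry the value $m+1$ from position $m+1$ down to $j$. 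Making this matching rigorous---via a careful wiring-diagram or insertion argument, together with the braid and commutation relations used in the proof of Theorem~\ref{sub-b-thm}---is the technical heart of the proof.
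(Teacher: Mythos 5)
Your outline follows the same route as the paper's actual proof: induction on $m+n$, the easy cases (a) and (b) via commuting parabolic subgroups, and a dichotomy in case (c) governed by where the value $m+1$ ends up. You even correctly anticipate that wiring diagrams are the right tool. But you explicitly leave the ``technical heart'' unresolved, and that is precisely where the content of the proof lives, so as written this is an outline rather than a proof. Two pieces are missing. First, the claim that every shuffle $c$ of $a\in\cR(u)$ with $b\uparrow m$, $b\in\cR(v)$, yields a permutation $\pi$ with $\pi_j=m+1$ or $\pi_{m+k+1}=m+1$ must be established rather than asserted; your ``tracking argument'' (letters of $a$ confine $m+1$ to positions $\leq m+1$, letters of $b\uparrow m$ to positions $\geq m+1$) does not by itself rule out the value $m+1$ wandering back and forth across position $m+1$ and ending somewhere other than $j$ or $m+k+1$. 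The paper instead looks at the standard wiring diagram $D=(f_1,\dots,f_m,g_1,\dots,g_{n+1})$ of $c$, partitions by whether $c\cap\{m,m+1\}$ begins with $m$ or with $m+1$, and uses the fact that in a \emph{reduced} diagram no two wires cross twice to conclude that the wire ending at height $m+1$ (namely $f_j$ in the first case, $g_{k+1}$ in the second) is monotone and bounds the wires $f_1,\dots,f_{j-1}$ above (resp.\ bounds $g_{k+2},\dots,g_{n+1}$ below); deleting those wires then produces a shuffle to which the inductive hypothesis applies, giving $\pi\in\cS_\shuffle(u,v)$.

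Second, and more seriously, your plan does not address the converse: that \emph{every} $\pi\in\cS_\shuffle(u,v)$ actually occurs among the shuffles. Your remark about ``excess crossing letters $s_j,\dots,s_m$'' is not the right accounting (the extra crossings involve the wires $f_1,\dots,f_j$ among themselves and with $f_{j+1},\dots,f_m$, encoded in the paper via a reduced word $d$ for $\st(u_1\cdots u_j)$ plus prescribed descent runs $e$, not consecutively indexed generators), and in any case no bijection between shuffles is attempted. The paper sidesteps the bijection entirely: it exhibits, for each $\sigma\in\cS_\shuffle(\tilde u,v)$ and each $\tilde c\in\cR(\sigma)$, one explicit reduced word $c=e\,d\,(\tilde c\uparrow j)$ for $u_1\cdots u_j\dfslash\sigma$ that is simultaneously a shuffle of some $a\in\cR(u)$ with $b\uparrow m$ (and similarly for the $\dbslash$ branch); since Theorem~\ref{sub-b-thm} already shows the set of all shuffles is a disjoint union of full classes $\cR(\pi)$, a single witness per $\pi$ suffices. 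Without both of these arguments filled in, the proposal correctly names the obstacles but does not overcome them.
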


For the proof, we will need some facts about wiring diagrams.
Fix $n,N \in\PP$ and let $I = \{ x \in \RR: 0 \leq x \leq N\}$.
Let  $f_1,f_2,\dots,f_n : I \to \RR$
be continuous functions.
The tuple $D = (f_1,f_2,\dots,f_n)$ is a \emph{wiring diagram}
if $f_1(0) < f_2(0) < \dots <f_n(0)$ and for each integer $1\leq i \leq N$, we have:
\ben
\item[(1)] The numbers $f_1(i), f_2(i), \dots, f_n(i)$ are all distinct.
\item[(2)] For each  $1 \leq j \leq n$, the restriction of $f_j$ to $(i-1,i) := \{ x \in \RR : i-1<x<i\}$ is a line.
\item[(3)] At most two functions $f_j$ and $f_k$ intersect in the open interval $(i-1,i)$,
and  an intersection occurs only if $f_j(i)$ and $f_k(i)$ are consecutive elements of $\{f_1(i),f_2(i), \dots, f_n(i)\}$.
\een
Properties (1)-(3) imply that if $f_j$ and $f_k$ intersect in the open interval $(i-1,i)$, then $f_j(i-1)$ and $f_k(i-1)$ are consecutive elements in 
$\{f_1(i-1),f_2(i-1), \dots, f_n(i-1)\}$.
The function $f_j$ is the $j$th \emph{wire} of $D$.
We say that the wires $f_j$ and $f_k$ cross if $f_j(x) = f_k(x)$ for some $x \in I$.
For example, 
\begin{center}
\begin{tikzpicture}[scale=0.5]
\draw (0,0) -- (1,1) -- (2,1) -- (3,1) -- (4,1) -- (5,2);
\draw (0,1) -- (1,0) -- (2,0) -- (3,0) -- (4,0) -- (5,0);
\draw (0,2) -- (1,2) -- (2,2) -- (3,3) -- (4,4) -- (5,4);
\draw (0,3) -- (1,3) -- (2,3) -- (3,2) -- (4,2) -- (5,1);
\draw (0,4) -- (1,4) -- (2,5) -- (3,5) -- (4,5) -- (5,5);
\draw (0,5) -- (1,5) -- (2,4) -- (3,4) -- (4,3) -- (5,3);
\end{tikzpicture}
\qquad
\begin{tikzpicture}[scale=0.5]
\draw (0,0) -- (1,1) -- (2,2) -- (3,2) -- (4,3) -- (5,4);
\draw (0,1) -- (1,0) -- (2,0) -- (3,0) -- (4,0) -- (5,0);
\draw (0,2) -- (1,2) -- (2,1) -- (3,1) -- (4,1) -- (5,1);
\draw (0,3) -- (1,3) -- (2,3) -- (3,3) -- (4,2) -- (5,2);
\draw (0,4) -- (1,4) -- (2,4) -- (3,5) -- (4,5) -- (5,5);
\draw (0,5) -- (1,5) -- (2,5) -- (3,4) -- (4,4) -- (5,3);
\end{tikzpicture}
\qquad
\begin{tikzpicture}[scale=0.5]
\draw (0,0) -- (1,1) -- (2,1) -- (3,1) -- (4,1) -- (5,2);
\draw (0,1) -- (1,0) -- (2,0) -- (3,0) -- (4,0) -- (5,0);
\draw (0,2) -- (1,2) -- (2,2) -- (3,3) -- (4,4) -- (5,4);
\draw (0,3) -- (1,3) -- (2,3) -- (3,2) -- (4,2) -- (5,1);
\end{tikzpicture}
\qquad
\begin{tikzpicture}[scale=0.5]
\node at (0,0) {\ };
\draw (0,2) -- (1,2) -- (2,1) -- (3,1) -- (4,1) -- (5,1);
\draw (0,3) -- (1,3) -- (2,3) -- (3,3) -- (4,2) -- (5,2);
\draw (0,4) -- (1,4) -- (2,4) -- (3,5) -- (4,5) -- (5,5);
\draw (0,5) -- (1,5) -- (2,5) -- (3,4) -- (4,4) -- (5,3);
\end{tikzpicture}
\end{center}
are wiring diagrams with $N=5$ and $n=6,6,4,4$, respectively.

The wiring diagram $D$ is \emph{reduced} if all distinct wires $f_j$ and $f_k$ cross at most once.
For $0\leq i \leq N$, define $\pi^i  = \st(f_1(i)f_2(i)\cdots f_n(i)) \in S_n$
and say that $D$ is a wiring diagram for $\pi = \pi^N$.
Then $\pi^0= 1 \in S_n$, and if $0<i\leq N$ then either $\pi^i = \pi^{i-1}$ 
or $\pi^i = s_j\pi^{i-1}$ for some $j $.
Consider the word $w$ of length $N$
whose $i$th letter is $0$ if  $\pi^i = \pi^{i-1}$
or otherwise  the index $j $ such that $\pi^i = s_j\pi^{i-1}$.
Define $\word(D)$ to be the sequence formed by 
removing all zeros from $w$ and then reversing the resulting subword. 
We refer to $\word(D)$ as the \emph{word} associated to $D$.
If $i_l\cdots i_2i_1 = \word(D)$ then $\pi = s_{i_l}\cdots s_{i_2}s_{i_1}$,
and it holds that $\word(D) \in \cR(\pi)$ if and only if $D$ is reduced.
The examples above are reduced wiring diagrams for
$\pi = 315264$, $512364$, $3142 = \st(3152)$, and $1243=\st(2364)$ with words $24351$, $43521$, $231$, and $3$.

Any word $i_l\cdots i_2i_1$ such that $\pi = s_{i_l}\cdots s_{i_2} s_{i_1} \in S_n$
is the word of the wiring diagram for $\pi$ given as follows:
take $N = l$, define $\pi^0 = 1 \in S_n$ and $\pi^j =s_{i_j} \cdots s_{i_2}s_{i_1} \in S_n$, and let
$D=(f_1,f_2,\dots,f_n)$ where
$f_i$ is the piecewise linear function $I \to \RR$ connecting the points $(x,y)=(j,\pi^j(i))$ for $j=0,1,\dots,l$.
We refer to this as the \emph{standard wiring diagram} of the word $i_l\cdots i_2i_1$ .
The first two examples above are the standard wiring diagrams for $24351$ and $43521$.

Suppose $D=(f_1,f_2,\dots,f_n)$ is a wiring diagram for $\pi \in S_n$. Let $i \in \{1,2,\dots,n\}$.
Then $E = (f_1,\dots,f_{i-1},f_{i+1},\dots,f_n)$ is a wiring diagram for $\st(\pi_1\cdots \pi_{i-1}\pi_{i+1}\cdots \pi_n) \in S_{n-1}$,
and if $D$ is reduced then $E$ is also reduced.
In turn,  $(f_{i+1},f_{i+2},\dots,f_n)$ 
and $(f_{1},f_{2},\dots,f_{i-1})$ are wiring diagrams for some  $x \in S_{n-i}$
and $y \in S_{i-1}$,
and we have $\pi = \pi_1\pi_2\cdots \pi_i \dfslash x = y \dbslash \pi_i\pi_{i+1}\cdots \pi_n$.
This is illustrated by the examples above, since $315264 = 3142 \dbslash 64$ and $512364 = 51 \dfslash 1243$.

\begin{proof}[Proof of Theorem~\ref{a-shuff-thm}]
The reader may find it helpful to consult the example following the proof, which works through our argument in a concrete case.

Let $u \in S_{m+1}$, $v \in S_{n+1}$,
and  $w_i = v_i + m$.
If $u_{m+1}=m+1$ then no reduced word for $u$ involves the letter $m$,
so every word obtained by shuffling $a \in \cR(u)$ and $b\uparrow m$ for $b\in \cR(v)$
is equivalent under the Coxeter relation $\sim$ described in the proof of Theorem~\ref{sub-b-thm} to $a(b\uparrow m)$,
which is a reduced word
for $\pi = u_1u_2\cdots u_m w_1w_2\cdots w_{n+1} \in S_{m+n+1}$.
Since $\nabla_{\shuffle}([u]\otimes [v])$ is a multiplicity-free sum of elements $[\sigma]$ with $\sigma \in S_{m+n+1}$,
it follows that $\nabla_{\shuffle}([u]\otimes [v]) = [\pi]$ as claimed.
If $w_1=m+1$ then no reduced word for $v$ involves the letter $1$, 
and it follows similarly that $\nabla_{\shuffle}([u]\otimes [v]) = [\pi]$ for $\pi = u_1u_2\cdots u_{m+1} w_2w_3\cdots w_{n+1} \in S_{m+n+1}$.

Suppose $u_j = w_{k+1} = m+1$
where $1\leq j<m+1$ and $0< k \leq n$.
Then every word $a \in \cR(u)$ contains the letter $m$ 
and every word $b \in \cR(v)$ contains the letter $1$.
Write $\tilde u = \st(u_{j+1}u_{j+2}\cdots u_{m+1})$ and $\tilde v = \st(v_1v_2\cdots v_{k})$.
Assume by induction that 
if $u' \in S_{m'+1}$ and $v' \in S_{n'+1}$ where $m' +n' < m + n$
then $\nabla_\shuffle([u']\otimes[v']) = \sum_{\pi \in \cS_\shuffle(u',v')} [\pi]$.
Fix $a \in \cR(u)$ and $b \in \cR(v)$.
Since no two wires in a reduced wiring diagram cross twice,
the $j$th wire of the standard wiring diagram of $a$ 
is monotonically increasing and equal to $m+1$ at $x=N$, and is an upper bound 
for wires $1,2,\dots,j-1$.
Similarly, the $(k+1)$th wire of the standard wiring diagram of $b$
is monotonically decreasing and is equal to $1$ at $x=N$, and is a lower bound for wires $k+1,k+2,\dots,n+1$.

Let $c$ be a word obtained by shuffling $a$ and $b\uparrow m$, i.e., assume $c \cap \{1,2,\dots,m\} = a$ and $c \cap (m+\PP) = b \uparrow m$.
Let $D=(f_1,f_2,\dots,f_m, g_1,g_2,\dots, g_{n+1})$ be the standard (reduced) wiring diagram of $c$
and suppose this is a wiring diagram for the permutation $\pi \in S_{m+n+1}$. 
Let $N =\ell(a) + \ell(b)= \ell(c)$
and note that  $c$ contains both $m$ and $m+1$ as letters. 
If $c \cap \{m,m+1\}$ begins with $m$,
then the wire $f_j$ is monotonically increasing with $f_j(N) = m+1$,
each of the wires $f_1,f_2,\dots,f_{j-1}$ is bounded above by $f_j$,
and we have
$\pi_1\pi_2\cdots \pi_j = u_1u_2\cdots u_j.$
It follows in this case that if $E = (f_{j+1},f_{j+2},\dots,f_{m},g_1,g_2,\dots, g_{n+1})$ 
then 
\[ \word(E) \cap (m-j+\PP) = b \uparrow (m-j) 
\qquand \word(E) \cap \{1,2,\dots,m-j\} \in \cR(\tilde u),\]
so $\pi =u_1u_2\cdots u_j \dfslash \sigma$
for some $\sigma \in \cS_\shuffle(\tilde u, v)$ by induction.
Alternatively, if $c \cap \{m,m+1\}$ begins with $m+1$,
then the wire $g_{k+1}$ is monotonically decreasing with $g_{k+1}(N)=m+1$,
 each of the wires $g_{k+2},g_{k+3},\dots,g_{n+1}$ is bounded below by $g_{k+1}$,
and
we have
$\pi_{m+k+1}\pi_{m+k+2}\cdots \pi_{m+n+1} = w_{k+1}w_{k+2}\cdots w_{n+1}.$
It follows in this case that if $F = (f_{1},f_{2},\dots,f_{m},g_1,g_2,\dots, g_{k})$
then \[ \word(F) \cap \{1,2,\dots,m\} = a  \qquand \word(F) \cap (m+\PP) \in \cR(\tilde v),
\] 
so $\pi = \sigma \dbslash w_{k+1}w_{k+2}\cdots w_{n+1}$
for some $\sigma \in \cS_\shuffle( u, \tilde v)$ by induction.

Since we know that the set of all shuffles of $a \in \cR(u)$ and $b\uparrow m$ for $b \in \cR(v)$
decomposes as a disjoint union of the sets $\cR(\pi)$ for certain permutations $\pi \in S_{m+n+1}$,
the preceding argument shows $\nabla_\shuffle([u]\otimes [v])$ is a multiplicity-free sum of terms $[\pi]$ where $\pi$ ranges over a subset of
$\cS_\shuffle(u,v)$.
To show that every term indexed by $\pi \in \cS_\shuffle(u,v)$ appears in the product, suppose $\sigma \in \cS_\shuffle(\tilde u, v)$ and $\tilde c \in \cR(\sigma)$.
By induction, $\tilde c$ is a shuffle of $\tilde a$ and $b \uparrow (m-j)$ for some $\tilde a \in \cR(\tilde u)$ and $b \in \cR(v)$.
Let $ d$ be any reduced word for $\tau := \st(u_1u_2\cdots u_j)$ 
and let $ e $ be the word formed by concatenating the sequences $(u_i-1)(u_i-2)\cdots \tau(i)$
as $i \in \{1,2,\dots,j\}$ varies in the order that makes the values $u_i$ increasing.
Then $a := e  d (\tilde a \uparrow j)$ is a reduced word for $u= u_1u_2\cdots u_j \dfslash \tilde u$, while $c := e  d (\tilde c\uparrow j)$ is a reduced word for 
$u_1u_2\cdots u_j \dfslash \sigma$ and also a shuffle of $a$ and $b\uparrow m$.
We conclude that $[u_1u_2\cdots u_j \dfslash \sigma]$ appears in the product $\nabla_\shuffle([u]\otimes [v])$.

Similarly, suppose $\sigma \in \cS_\shuffle( u, \tilde v)$ and $\tilde c \in \cR(\sigma)$. Then $\tilde c$ is a shuffle of $ a$ and $\tilde b \uparrow m$ for some $ a \in \cR( u)$ and $b \in \cR(\tilde v)$.
Let $ d$ be any reduced word for $\tau := \st(v_{k+1}v_{k+2}\cdots v_{n+1})$ 
and let $ e $ be the word formed by concatenating the sequences $v_i(v_i+1)\cdots (\tau(i-k) + k)$
as $i \in \{k+1,k+2,\dots, n+1\}$ varies in the order that makes the values $v_i$ decreasing.
Then $b := e(d\uparrow k) \tilde b$ is a reduced word for $v= \tilde v\dbslash v_{k+1}v_{k+2}\cdots v_{n+1}$, while $c :=  (e\uparrow m)  (d\uparrow (m+k)) \tilde c$ is a reduced word for 
$ \sigma \dbslash w_{k+1}w_{k+1}\cdots w_{n+1}$ and also a shuffle of $a$ and $b\uparrow m$.
Hence $[\sigma \dbslash w_{k+1}w_{k+2}\cdots w_{n+1}]$ appears in the product $\nabla_\shuffle([u]\otimes [v])$.
This completes our proof that $\nabla_{\shuffle}([u]\otimes [v]) = \sum_{\pi \in \cS_\shuffle(u,v)} [\pi]$.
\end{proof}

\begin{example} We illustrate our proof strategy by computing $\nabla_\shuffle([231]\otimes [312])$.
Let $u = 231$ and $v=312$ so that $m=n=2$ and $w = 534$. The permutations $u$ and $v$ each have one reduced word: $\cR(u) = \{12\}$ and $\cR(v) = \{21\}$.
These words have the following wiring diagrams:
\begin{center}
$\barr{c}
\begin{tikzpicture}[scale=0.5]
\draw (0,0) -- (1,0) -- (2,1);
\draw (0,1) -- (1,2) -- (2,2);
\draw (0,2) -- (1,1) -- (2,0);
\end{tikzpicture}
\\
12
\earr$
\qquad
$\barr{c}
\begin{tikzpicture}[scale=0.5]
\draw (0,0) -- (1,1) -- (2,2);
\draw (0,1) -- (1,0) -- (2,0);
\draw (0,2) -- (1,2) -- (2,1);
\end{tikzpicture}
\\
21
\earr$
\end{center}
We have $u_j = w_{k+1} = m+1$ for $j=k+1=2$.
Let $a = 12 \in \cR(u)$ and $b=21 \in \cR(v)$. There are $6 = \binom{4}{2}$ different shuffles $c$ of $a$ and $b \uparrow m$.
The three wiring diagrams $D = (f_1,f_2,g_1,g_2,g_3)$ of the shuffles $c$ such that $c \cap \{m,m+1\}$ begins with $m=2$ are shown below:
\begin{center}
$\barr{c} 
\begin{tikzpicture}[scale=0.5]
\draw (0,0) -- (1,0) -- (2,0) -- (3,0) -- (4,1);
\draw[dashed] (0,1) -- (1,1) -- (2,1) -- (3,2) -- (4,2);
\draw (0,2) -- (1,3) -- (2,4) -- (3,4) -- (4,4);
\draw (0,3) -- (1,2) -- (2,2) -- (3,1) -- (4,0);
\draw (0,4) -- (1,4) -- (2,3) -- (3,3) -- (4,3);
\end{tikzpicture}
\\
1243
\earr$
\qquad
$\barr{c} 
\begin{tikzpicture}[scale=0.5]
\draw (0,0) -- (1,0) -- (2,0) -- (3,1) -- (4,1);
\draw[dashed] (0,1) -- (1,1) -- (2,2) -- (3,2) -- (4,2);
\draw (0,2) -- (1,3) -- (2,3) -- (3,3) -- (4,4);
\draw (0,3) -- (1,2) -- (2,1) -- (3,0) -- (4,0);
\draw (0,4) -- (1,4) -- (2,4) -- (3,4) -- (4,3);
\end{tikzpicture}
\\
4123
\earr$
\qquad
$\barr{c}
\begin{tikzpicture}[scale=0.5]
\draw (0,0) -- (1,0) -- (2,0) -- (3,0) -- (4,1);
\draw[dashed] (0,1) -- (1,1) -- (2,2) -- (3,2) -- (4,2);
\draw (0,2) -- (1,3) -- (2,3) -- (3,4) -- (4,4);
\draw (0,3) -- (1,2) -- (2,1) -- (3,1) -- (4,0);
\draw (0,4) -- (1,4) -- (2,4) -- (3,3) -- (4,3);
\end{tikzpicture}
\\
1423
\earr$
\end{center}
In these pictures, the wire $f_j$ is shown as a dashed line. As described in the proof, this wire is monotonically increasing and eventually equal to $m+1$,
and is an upper bound for $f_1,f_2,\dots,f_{j-1}$. Moreover, if $D$ is a wiring diagram for $\pi \in S_{m+n-1}$ then  $\pi_1\cdots \pi_j = u_1\cdots u_j= 23$ as claimed.
Removing the wires $f_1,f_2,\dots,f_j$ from each diagram produces a wiring diagram for $v$, which is the unique element in $\cS_\shuffle(\tilde u, v)$ since $\tilde u = \st(u_{j+1}u_{j+2}\cdots u_{m+1})=1$:
\begin{center}
$\barr{c} 
\begin{tikzpicture}[scale=0.5]
\draw (0,2) -- (1,3) -- (2,4) -- (3,4) -- (4,4);
\draw (0,3) -- (1,2) -- (2,2) -- (3,1) -- (4,0);
\draw (0,4) -- (1,4) -- (2,3) -- (3,3) -- (4,3);
\end{tikzpicture}
\earr$
\qquad
$\barr{c} 
\begin{tikzpicture}[scale=0.5]
\draw (0,2) -- (1,3) -- (2,3) -- (3,3) -- (4,4);
\draw (0,3) -- (1,2) -- (2,1) -- (3,0) -- (4,0);
\draw (0,4) -- (1,4) -- (2,4) -- (3,4) -- (4,3);
\end{tikzpicture}
\earr$
\qquad
$\barr{c}
\begin{tikzpicture}[scale=0.5]
\draw (0,2) -- (1,3) -- (2,3) -- (3,4) -- (4,4);
\draw (0,3) -- (1,2) -- (2,1) -- (3,1) -- (4,0);
\draw (0,4) -- (1,4) -- (2,4) -- (3,3) -- (4,3);
\end{tikzpicture}
\earr$
\end{center}
The word $1243$ corresponds to the reduced word $c = ed(\tilde c \uparrow j)$ described in the second to last paragraph of the proof of Theorem~\ref{a-shuff-thm}.

Next, consider the wiring diagrams $D = (f_1,f_2,g_1,g_2,g_3)$ for the three words $c$ obtained by shuffling $a=12$ and $b\uparrow m = 43$ such that $c \cap \{m,m+1\}$ begins with $m+1=3$:
\begin{center}
$\barr{c}
\begin{tikzpicture}[scale=0.5]
\draw (0,0) -- (1,0) -- (2,1) -- (3,1) -- (4,1);
\draw (0,1) -- (1,2) -- (2,2) -- (3,3) -- (4,4);
\draw (0,2) -- (1,1) -- (2,0) -- (3,0) -- (4,0);
\draw[dashed] (0,3) -- (1,3) -- (2,3) -- (3,2) -- (4,2);
\draw (0,4) -- (1,4) -- (2,4) -- (3,4) -- (4,3);
\end{tikzpicture}
\\
4312
\earr$
\qquad
$\barr{c}
\begin{tikzpicture}[scale=0.5]
\draw (0,0) -- (1,0) -- (2,0) -- (3,0) -- (4,1);
\draw (0,1) -- (1,2) -- (2,3) -- (3,4) -- (4,4);
\draw (0,2) -- (1,1) -- (2,1) -- (3,1) -- (4,0);
\draw[dashed] (0,3) -- (1,3) -- (2,2) -- (3,2) -- (4,2);
\draw (0,4) -- (1,4) -- (2,4) -- (3,3) -- (4,3);
\end{tikzpicture}
\\
1432
\earr$
\qquad
$\barr{c}
\begin{tikzpicture}[scale=0.5]
\draw (0,0) -- (1,0) -- (2,0) -- (3,1) -- (4,1);
\draw (0,1) -- (1,2) -- (2,3) -- (3,3) -- (4,4);
\draw (0,2) -- (1,1) -- (2,1) -- (3,0) -- (4,0);
\draw[dashed] (0,3) -- (1,3) -- (2,2) -- (3,2) -- (4,2);
\draw (0,4) -- (1,4) -- (2,4) -- (3,4) -- (4,3);
\end{tikzpicture}
\\
4132
\earr$
\end{center}
In these pictures, the wire $g_{k+1}$ is shown as a dashed line. This wire is monotonically decreasing and eventually equal to $m+1$,
and is a lower bound for $g_{k+2},\dots,g_{n+1}$. In turn, if $D$ is a wiring diagram for $\pi \in S_{m+n-1}$ then  $\pi_{m+k+1}\cdots \pi_{m+n+1}= w_{k+1}\cdots w_{n+1} = 34$ as claimed,
and it is easy to see that removing the wires $g_{k+1},g_{k+2},\dots,g_{n+1}$ from each diagram produces a wiring diagram for $u$,
which is the unique element of $\cS_\shuffle(u,\tilde v)$ since $\tilde v = \st(v_1v_2\cdots v_k) = 1$:
\begin{center}
$\barr{c}
\begin{tikzpicture}[scale=0.5]
\draw (0,0) -- (1,0) -- (2,1) -- (3,1) -- (4,1);
\draw (0,1) -- (1,2) -- (2,2) -- (3,3) -- (4,4);
\draw (0,2) -- (1,1) -- (2,0) -- (3,0) -- (4,0);
\end{tikzpicture}
\earr$
\qquad
$\barr{c}
\begin{tikzpicture}[scale=0.5]
\draw (0,0) -- (1,0) -- (2,0) -- (3,0) -- (4,1);
\draw (0,1) -- (1,2) -- (2,3) -- (3,4) -- (4,4);
\draw (0,2) -- (1,1) -- (2,1) -- (3,1) -- (4,0);
\end{tikzpicture}
\earr$
\qquad
$\barr{c}
\begin{tikzpicture}[scale=0.5]
\draw (0,0) -- (1,0) -- (2,0) -- (3,1) -- (4,1);
\draw (0,1) -- (1,2) -- (2,3) -- (3,3) -- (4,4);
\draw (0,2) -- (1,1) -- (2,1) -- (3,0) -- (4,0);
\end{tikzpicture}
\earr$
\end{center}
The word $4312$ corresponds to the reduced word $c =  (e\uparrow m)  (d\uparrow (m+k)) \tilde c$ described in the last paragraph of the proof of Theorem~\ref{a-shuff-thm}.
We conclude that 
\[\nabla_\shuffle([231]\otimes[312]) = \sum_{\pi \in \cS_\shuffle(\tilde u,v)} [\pi] + \sum_{\pi \in \cS_\shuffle(u,\tilde v)}[\pi] = \sum_{\pi \in \cS_\shuffle(u,v)} [\pi]
= [23514] + [25134]. 
\]
\end{example}

%
%
%

Let $X\subset S_{m+1}$ and $Y \subset S_{n+1}$.
Since we can recover $(u,v) \in S_{m+1}\times S_{n+1}$ from $\pi \in \cS_\shuffle(u,v)$
by intersecting its reduced words with $\{1,2,\dots,m\}$ and $m + \{1,2,\dots,n\}$,
 the sets $\cS_\shuffle(u,v)$ are disjoint for all $u \in X$ and $v \in Y$.
Define $\cS_\shuffle(X,Y) = \bigsqcup_{(u,v) \in X\times Y} \cS_\shuffle(u,v) \subset S_{m+n+1}$
and $\cS_\shuffle(u,Y) = \cS_\shuffle(\{u\},Y)$ and $\cS_\shuffle(X,v) = \cS_\shuffle(X,\{v\})$.
The associativity of $\nabla_\shuffle$ implies the following:

\begin{corollary}
If $(\pi^1,\pi^2,\pi^3) \in S_{l+1}\times S_{m+1}\times S_{n+1}$ then $\cS_\shuffle(\pi^1,\cS_\shuffle(\pi^2,\pi^3)) = \cS_\shuffle(\cS_\shuffle(\pi^1,\pi^2),\pi^3)$.
\end{corollary}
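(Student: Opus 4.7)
The plan is to deduce the identity directly from the associativity of $\nabla_\shuffle$ combined with the product formula of Theorem~\ref{a-shuff-thm}. First I would expand the element $\nabla_\shuffle([\pi^1]\otimes\nabla_\shuffle([\pi^2]\otimes[\pi^3]))$ from the inside out: applying Theorem~\ref{a-shuff-thm} to the inner factor gives $\sum_{\sigma\in\cS_\shuffle(\pi^2,\pi^3)}[\sigma]$, and then applying the same formula to each $\nabla_\shuffle([\pi^1]\otimes[\sigma])$ yields
\[
\nabla_\shuffle([\pi^1]\otimes\nabla_\shuffle([\pi^2]\otimes[\pi^3])) \;=\; \sum_{\sigma\in\cS_\shuffle(\pi^2,\pi^3)}\ \sum_{\pi\in\cS_\shuffle(\pi^1,\sigma)}[\pi].
\]
By the definition of $\cS_\shuffle(\pi^1,Y)$ as a disjoint union, together with the disjointness of the sets $\cS_\shuffle(u,v)$ noted in the paragraph preceding the corollary, this rewrites as $\sum_{\pi\in \cS_\shuffle(\pi^1,\cS_\shuffle(\pi^2,\pi^3))}[\pi]$.

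An entirely parallel computation, starting this time with the outer factor, gives
\[
\nabla_\shuffle(\nabla_\shuffle([\pi^1]\otimes[\pi^2])\otimes[\pi^3]) \;=\; \sum_{\pi\in \cS_\shuffle(\cS_\shuffle(\pi^1,\pi^2),\pi^3)}[\pi].
\]
Since $\bfW$ is a subalgebra of the convolution algebra $\End(\Shuffle)$ (established in Theorem~\ref{w-thm}) and $\bfPi$ is a subalgebra of $\bfW$ (Theorem~\ref{sub-b-thm}), the product $\nabla_\shuffle$ is associative, so the two expressions above coincide in $\bfPi$.

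To conclude the set-level identity from the element-level identity, I would invoke the linear independence of $\{[\pi] : \pi\in S_{l+m+n+1}\}$ in $\bfPi$: by definition each $[\pi]$ is a sum of distinct basis vectors $[w,l+m+n]\in\Words$ indexed by $\cR(\pi)$, and since reduced-word sets for distinct permutations are pairwise disjoint, comparing coefficients of each $[w,l+m+n]$ on the two sides forces the two indexing subsets of $S_{l+m+n+1}$ to agree. The only subtlety is bookkeeping: I want each of the iterated sums to be genuinely multiplicity-free across all pairs $(u,\sigma)$ or $(\tau,v)$. This follows from the recoverability of $(u,v)$ from any $\pi\in\cS_\shuffle(u,v)$ (via intersecting reduced words with $\{1,\dots,m\}$ and $m+\{1,\dots,n\}$), which is exactly the remark supplied just before the corollary. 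I do not anticipate any real obstacle; the argument is a short bookkeeping exercise building on Theorem~\ref{a-shuff-thm} and associativity.
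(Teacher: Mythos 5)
Your proof is correct and takes essentially the same route as the paper: the corollary is stated there as an immediate consequence of the associativity of $\nabla_\shuffle$ combined with Theorem~\ref{a-shuff-thm} and the disjointness observations in the preceding paragraph, which is exactly the argument you flesh out. The linear-independence step you supply (via Corollary~\ref{basis-cor} and disjointness of the reduced-word sets $\cR(\pi)$) to pass from the element-level identity to the set-level identity is the bookkeeping the paper leaves implicit.
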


For permutations $\pi^1,\pi^2,\dots,\pi^k$, let $\cS_\shuffle(\pi^1,\pi^2,\dots,\pi^k) = \cS_\shuffle(\pi^1, \cS_\shuffle(\pi^2,\dots,\pi^k))$
where $\cS_\shuffle(\pi) = \{\pi\}$.  
Using this notation, we can answer a natural question about what happens when we apply the flattening map $\st$ 
to the reduced words of a permutation.

A permutation  $\pi = \pi_1\pi_2\cdots \pi _n \in S_n$ is \emph{irreducible} if 
$\{\pi_1,\pi_2,\dots,\pi_m\} \neq\{1,2,\dots,m\}$ for all $1\leq m <n$.
If $u \in S_m$ and $v \in S_n$, then define $u \oplus v = u (v \uparrow m)  \in S_{m+n}$.
Clearly $\pi \oplus(\pi' \oplus\pi'') = (\pi \oplus \pi') \oplus \pi''$,
so we can omit all parentheses in expresssions using $\oplus$. Moreover, every $\pi \in S_n$
has a unique factorization $\pi = \pi^1 \oplus \pi^2 \oplus \cdots \oplus \pi^k$ where each $\pi^i \in S_1 \sqcup S_2 \sqcup S_ 3 \sqcup \cdots$ is irreducible.

\begin{corollary}
Suppose $\pi = \pi^1 \oplus \pi^2 \oplus \cdots \oplus \pi^k \in S_n$ where each $\pi^i$ is an irreducible permutation.
The flattening map $\st$ is then a bijection $\cR(\pi) \to \bigsqcup_{\sigma \in \cS_\shuffle(\pi^1,\pi^2,\dots,\pi^k)} \cR(\sigma)$.
\end{corollary}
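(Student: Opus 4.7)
The plan is to identify two expansions of the element
$\nabla_\shuffle([\pi^1]\otimes[\pi^2]\otimes\cdots\otimes[\pi^k]) \in \bfW$
and compare their coefficients in the basis $\Words$. Let $m_i = n_i - 1$ where $\pi^i \in S_{n_i}$, and set $M_j := m_1+\cdots+m_j$ and $N_j := n_1+\cdots+n_j$. By iterated application of Theorem~\ref{a-shuff-thm} together with the associativity of $\nabla_\shuffle$, this element is
$\sum_{\sigma \in \cS_\shuffle(\pi^1,\dots,\pi^k)} \sum_{c \in \cR(\sigma)} [c, n-k]$. On the other hand, substituting $[\pi^i] = \sum_{w^i \in \cR(\pi^i)} [w^i, m_i]$ and iterating the explicit shuffle product formula \eqref{nabla-eq} expresses the same element as a sum of $[c, n-k]$ where $c$ ranges over all shuffles of $w^1, w^2 \uparrow M_1, \dots, w^k \uparrow M_{k-1}$ as each $w^i$ varies in $\cR(\pi^i)$.

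To connect these shuffles with $\cR(\pi)$, I would observe that $\pi = \pi^1 \cdot (\pi^2 \uparrow N_1) \cdots (\pi^k \uparrow N_{k-1})$ is a commuting, length-additive product of permutations supported on disjoint intervals of $\{1,\dots,n\}$. It follows that every $w \in \cR(\pi)$ is uniquely a shuffle of $w^1, w^2 \uparrow N_1, \dots, w^k \uparrow N_{k-1}$ for some $w^i \in \cR(\pi^i)$, with letters drawn from the pairwise disjoint intervals $\{N_{i-1}+1,\dots,N_{i-1}+m_i\}$ and avoiding the separator values $N_1,\dots,N_{k-1}$. Applying $\st$ to such a word is precisely the order-preserving bijection that collapses out these separators, converting each factor $w^i \uparrow N_{i-1}$ into $w^i \uparrow M_{i-1}$. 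Thus $\st$ is injective on $\cR(\pi)$, and its image is exactly the set of shuffles appearing in the second expansion above.

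Because the alphabets of the pieces $w^i \uparrow M_{i-1}$ remain pairwise disjoint after flattening, each shuffle $c$ uniquely determines the tuple $(w^1,\dots,w^k)$ and the positions of its entries, so the coefficient of $[c, n-k]$ in the second expansion is $0$ or $1$. Comparing with the first expansion and invoking the linear independence of $\Words$ from Corollary~\ref{basis-cor}, it follows that $\st(\cR(\pi))$ coincides exactly with $\bigsqcup_\sigma \cR(\sigma)$ (with the union disjoint because each $\sigma \in \cS_\shuffle(\pi^1,\dots,\pi^k)$ is determined by any one of its reduced words). The main point requiring care is the bookkeeping between the two index conventions $m_i = n_i - 1$ versus $n_i$, and verifying that the shifts $N_{i-1} \mapsto M_{i-1}$ induced by $\st$ align exactly with those appearing in the iterated product; once this is checked, the comparison of coefficients delivers the bijection with essentially no additional calculation.
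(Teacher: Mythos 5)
Your proof is correct and takes essentially the same route as the paper: both arguments compare the iterated product $\nabla_\shuffle^{(k-1)}([\pi^1]\otimes\cdots\otimes[\pi^k])$ in two ways, once via Theorem~\ref{a-shuff-thm} and once via the explicit shuffle formula, and identify $\st(\cR(\pi))$ with the set of shuffles that arise. The paper's proof is terser and treats the identification of $\st(\cR(\pi))$ with the disjoint union of the relevant shuffle sets as clear, whereas you make explicit the bookkeeping of index shifts $N_{i-1}\mapsto M_{i-1}$ and (implicitly) the fact that irreducibility of each $\pi^i$ guarantees every reduced word of $\pi^i$ uses the full alphabet $\{1,\dots,n_i-1\}$, which is what makes the flattening collapse the separators uniformly; this is a point worth stating explicitly but the overall argument is the same.
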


\begin{proof}
The map $\st$ is certainly injective,
and if $n_i \in \NN$ is such that $\pi^i \in S_{n_i + 1}$ ,
then $\st(\cR(\pi))$ is the disjoint union of the reduced words of
 the permutations $\sigma \in S_{n_1 + n_2 + \dots +n_k +1}$ for which $[\sigma]$ is a term in the product 
 $\nabla_\shuffle^{(k-1)}([\pi^1] \otimes [\pi^2]\otimes \cdots \otimes [\pi^k])$,
 which by Theorem~\ref{a-shuff-thm} is $\sum_{\sigma \in \cS_\shuffle(\pi^1,\pi^2,\dots,\pi^k)} [\sigma]$.
\end{proof}

\begin{example}
We have $\cR(231645)=\cR(231 \oplus 312) = \{ 1254, 1524, 1542, 5124, 5142, 5412\}$
and  $\{ 1243, 1423, 1432, 4123, 4132, 4312\} = \cR( 23514) \sqcup \cR(25134) = \bigsqcup_{\sigma \in \cS_\shuffle(231,312)} \cR(\sigma)$.
\end{example}

A permutation is \emph{fully commutative} if none of its reduced words contains a consecutive subword of the form $i(i+1)i$.
It is well-known \cite[Theorem 2.1]{BJS} that 
$\pi  \in S_n$ is fully commutative if and only if $\pi$ is \emph{321-avoiding}
in the sense that no indices $i<j<k$ have $\pi_i > \pi_j > \pi_k$.

\begin{corollary}\label{321-cor}
Let $u \in S_{m}$ and $v \in S_{n}$ be fully commutative. Set $w= v \uparrow (m-1)$.
Form $u' \in S_{m+n-1}$ from $uw$ by replacing the second occurrence of $m$ by $u_{m}$ and then removing the $m$th letter.
Form $v' \in S_{m+n-1}$ from $uw$ by replacing the first occurrence of $m$ by $w_{1}$ and then removing the $(m+1)$th letter.
If $u'=v'$ then $\nabla_{\shuffle}([u]\otimes [v]) = [u'] = [v']$; otherwise $\nabla_{\shuffle}([u]\otimes [v])  = [u'] + [v']$.
\end{corollary}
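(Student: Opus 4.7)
The strategy is to deduce this from Theorem~\ref{a-shuff-thm} by showing directly that when $u$ and $v$ are fully commutative, the set $\cS_\shuffle(u,v)$ consists of at most two elements, which can be identified with $u'$ and $v'$. I would translate to the indexing of Definition~\ref{a-shuffle-def} (where $u \in S_{M+1}$, $v \in S_{N+1}$ with $M = m-1$ and $N = n-1$), let $j$ be the position of $m$ in $u$ and $i$ that of $1$ in $v$, and proceed through the cases of that definition.

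In case (a), where $u_m = m$ (so $j = m$), the set $\cS_\shuffle(u,v)$ is already the singleton $\{u_1 u_2 \cdots u_{m-1} w_1 w_2 \cdots w_n\}$, and direct inspection of the formulas in the corollary shows that $u' = v'$ equals this element. Case (b), where $v_1 = 1$, is symmetric.

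The substantive case is (c), where $j < m$ and $i > 1$. Here full commutativity plays the decisive role: since $u_j = m$ is the maximum value in $u$, any inversion in the suffix $u_{j+1} u_{j+2} \cdots u_m$ would combine with $u_j$ to form a 321 pattern, so this suffix must be strictly increasing; dually, the prefix $v_1 v_2 \cdots v_{i-1}$ is strictly increasing. Therefore $\tilde u = \st(u_{j+1} \cdots u_m)$ and $\tilde v = \st(v_1 \cdots v_{i-1})$ are identity permutations. In the recursive formula of Definition~\ref{a-shuffle-def}(c), case (a) then applies immediately to $\cS_\shuffle(\tilde u, v)$ and case (b) to $\cS_\shuffle(u, \tilde v)$, so each piece contributes exactly one element and $|\cS_\shuffle(u,v)| \leq 2$.

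It remains to identify these two elements with $u'$ and $v'$. The unique element of $\cS_\shuffle(\tilde u, v)$ is $\sigma = 1\, 2\, \cdots (m-j-1)\, (v \uparrow (m-j-1))$; composing with the order-preserving bijection $\psi$ that realizes $u_1 u_2 \cdots u_j \dfslash \sigma$ sends the initial run $1,2,\ldots,m-j-1$ onto the sorted complement $\{1,\ldots,m-1\} \setminus \{u_1,\ldots,u_{j-1}\}$, which by the strict increase of the suffix of $u$ is exactly $u_{j+1} < u_{j+2} < \cdots < u_{m-1}$, and sends the image of $v_i = 1$ onto the remaining missing value $u_m$. The resulting element is $u_1 \cdots u_{m-1} w_1 \cdots w_{i-1} u_m w_{i+1} \cdots w_n = u'$. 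A dual calculation with the bijection $\phi$ in $\sigma' \dbslash w_i \cdots w_n$ yields the second element as $v'$. Finally, since in case (c) we have $u_m \leq m-1$ while $w_1 = v_1 + m - 1 \geq m+1$, the entries in position $m$ of $u'$ and $v'$ differ, so $u' \neq v'$, consistent with the statement.

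The main obstacle is the careful bookkeeping of $\psi$ and $\phi$: one has to verify that their images on the recursive outputs reconstruct precisely the explicit formulas for $u'$ and $v'$ as written in the corollary. The key simplification afforded by full commutativity is that, after passing to $\tilde u$ and $\tilde v$, the recursion terminates after a single step, reducing what could be an iterated unfolding to an essentially one-shot calculation.
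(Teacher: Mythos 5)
Your proof is correct and follows essentially the same route as the paper's: the paper's proof is the single sentence that Corollary~\ref{321-cor} follows from Theorem~\ref{a-shuff-thm} because the 321-avoiding hypothesis forces $\tilde u$ and $\tilde v$ in Definition~\ref{a-shuffle-def} to be identity permutations, which is exactly your key observation. You then carry out the (correct, if not fully spelled out in the paper) bookkeeping to identify the two resulting elements with $u'$ and $v'$; the only small imprecision is that $\{1,\ldots,m-1\}\setminus\{u_1,\ldots,u_{j-1}\}$ equals $\{u_{j+1},\ldots,u_m\}$, with $m-j$ elements, so the initial run $1,\ldots,m-j-1$ maps onto only its $m-j-1$ smallest members $u_{j+1},\ldots,u_{m-1}$ as you implicitly use in the next sentence.
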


For example, $\nabla_{\shuffle}([4123]\otimes [2341]) = [4125673] + [5123674]$.

\begin{proof}
This follows from Theorem~\ref{a-shuff-thm} since if $u$ and $v$ are 321-avoiding,
then the elements $\tilde u$ and $\tilde v$ in Definition~\ref{a-shuffle-def}
must both be identity permutations.
\end{proof}

Let $\bfPi^{\FC}$ be the space spanned by $[\pi]$ for all fully commutative elements $\pi \in S_{n}$ and $n \in \PP$.
\begin{corollary}
The subspace $\bfPi^{\FC} $ is a graded sub-bialgebra of $(\bfPi,\nabla_\shuffle,\iota_\shuffle,\Delta_\odot,\epsilon_\odot)$.
\end{corollary}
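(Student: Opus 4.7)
The plan is to check that $\bfPi^{\FC}$ contains the unit, is preserved by both $\nabla_\shuffle$ and $\Delta_\odot$, and is a graded subspace; closure under $\iota_\shuffle$ and $\epsilon_\odot$ is then immediate since $\iota_\shuffle(1)=[1\in S_1]\in\bfPi^{\FC}$ (the identity is vacuously fully commutative), and grading holds because each basis element $[\pi]$ is homogeneous of degree $\ell(\pi)$.

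First I would establish an auxiliary lemma: \emph{if some reduced word $c$ for $\pi$ has no consecutive subword $i(i+1)i$, then $\pi$ is fully commutative.} This uses the word property recalled in the proof of Theorem~\ref{sub-b-thm}: any two reduced words for $\pi$ are connected by commutation and braid moves. A commutation move $ij\leftrightarrow ji$ with $|i-j|>1$ preserves the property of lacking $i(i+1)i$ subwords (a quick case check on three consecutive letters), while a braid move $i(i+1)i\leftrightarrow(i+1)i(i+1)$ cannot be applied to a word lacking the pattern in the first place. Hence starting from $c$, every reduced word reachable from $c$ is free of the forbidden pattern.

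For closure under $\Delta_\odot$, Corollary~\ref{coprod-cor} describes the coproduct in terms of length-additive factorizations $\pi\overset{\bullet}=\pi'\pi''$, each of which corresponds to a splitting $c = c'c''$ with $c'\in\cR(\pi')$ and $c''\in\cR(\pi'')$ giving a reduced word for $\pi$. If $\pi$ is fully commutative, then $c$ has no $i(i+1)i$ subword, so its prefix $c'$ and suffix $c''$ inherit the same property, and the auxiliary lemma makes both $\pi'$ and $\pi''$ fully commutative.

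For closure under $\nabla_\shuffle$, let $u\in S_{m+1}$ and $v\in S_{n+1}$ be fully commutative. Every $\pi\in\cS_\shuffle(u,v)$ admits a reduced word $c$ obtained by shuffling some $a\in\cR(u)$ with $b\uparrow m$ for some $b\in\cR(v)$. I would verify that $c$ has no consecutive subword $p(p+1)p$ by cases on $p$: if $p\leq m-1$, all three letters lie in $a$ and must be consecutive within $a$, contradicting full commutativity of $u$; if $p\geq m+1$, the analogous argument applies to $b$ and $v$; and if $p=m$, the middle letter $m+1$ comes from $b\uparrow m$ while the outer $m$'s come from $a$, forcing two equal consecutive letters in the reduced word $a$, which is impossible. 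The auxiliary lemma then yields that $\pi$ is fully commutative, so $\nabla_\shuffle([u]\otimes[v])\in\bfPi^{\FC}$. The main obstacle is the auxiliary lemma, whose proof is short but subtle; the remaining verifications amount to routine bookkeeping. Alternatively, one could appeal to Corollary~\ref{321-cor} and check that the permutations $u'$ and $v'$ produced there are 321-avoiding, but the reduced-word approach above handles product and coproduct uniformly.
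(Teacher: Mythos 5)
Your proof has a genuine gap: the auxiliary lemma is false. It is \emph{not} true that a permutation having one reduced word with no consecutive subword $i(i+1)i$ must be fully commutative, and the supporting claim that a commutation move $ij\leftrightarrow ji$ preserves the absence of braid patterns also fails. The problem is that a commutation move can create a braid pattern in a triple that overlaps the swapped pair in exactly one position. Concretely, take $\pi = 3241 \in S_4$. Its reduced words are $\{1231, 1213, 2123\}$, so $\pi$ is not fully commutative (indeed it contains the pattern $321$ via $\pi_1\pi_2\pi_4 = 3,2,1$). Yet $1231$ has no consecutive $p(p+1)p$. Applying the commutation $31 \to 13$ at positions $3,4$ (valid since $|3-1|>1$) turns $1231$ into $1213$, which contains $121$. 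So the ``quick case check'' fails when the three relevant positions straddle the swap, and the auxiliary lemma is refuted.

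Fortunately, both closures you want can be established without the auxiliary lemma, by working with \emph{all} reduced words rather than just one. For the coproduct, suppose $\pi$ is fully commutative and $\pi\overset{\bullet}{=}\pi'\pi''$. If some reduced word of $\pi'$ (say) contained $i(i+1)i$, then concatenating it with any reduced word of $\pi''$ would give a reduced word of $\pi$ containing $i(i+1)i$, contradicting that $\pi$ is fully commutative; hence $\pi'$ and $\pi''$ are fully commutative. For the product, the proof of Theorem~\ref{sub-b-thm} already shows that \emph{every} reduced word of $\pi\in\cS_\shuffle(u,v)$ is obtained by shuffling some $a\in\cR(u)$ with $b\uparrow m$ for some $b\in\cR(v)$, not merely one such word. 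Your case analysis on $p\leq m-1$, $p\geq m+1$, $p=m$ then shows directly that no reduced word of $\pi$ contains $p(p+1)p$, so $\pi$ is fully commutative by definition with no intermediate lemma needed. The paper itself instead invokes Corollary~\ref{321-cor}, which gives an explicit formula $\nabla_\shuffle([u]\otimes[v])=[u']+[v']$ (or $[u']$ if $u'=v'$) and reduces the product closure to checking that the explicit outputs $u',v'$ are $321$-avoiding; your reduced-word route, once repaired as above, is a reasonable alternative that treats product and coproduct uniformly, but as written it rests on a false lemma.
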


\begin{proof}
Clearly $\Delta_\odot(\bfPi^{\FC}) \subset \bfPi^{\FC} \otimes \bfPi^{\FC}$,
and $\nabla_\shuffle(\bfPi^{\FC} \otimes \bfPi^{\FC}) \subset \bfPi^\FC$ holds by Corollary~\ref{321-cor}.
\end{proof}

\section{Combinatorial bialgebras}\label{qsym-sect}

A \emph{composition} $\alpha$ of $n \in \NN$, written $\alpha \vDash n$,
is a sequence of positive integers $\alpha=(\alpha_1,\alpha_2,\dots,\alpha_l)$
with $\alpha_1 + \alpha_2 + \dots + \alpha_l = n$. 
The unique composition of $n=0$ is the empty word $\emptyset$.
Let $\kk[[x_1,x_2,\dots]]$ be the algebra of formal power series with coefficients in $\kk$ in a countable set of commuting variables.
The \emph{monomial quasi-symmetric function} $M_\alpha$ indexed by a
composition $\alpha\vDash n $ with $l$ parts
is 
\[ M_\alpha = \sum_{i_1<i_2<\dots<i_l} x_{i_1}^{\alpha_1} x_{i_2}^{\alpha_2}\cdots x_{i_l}^{\alpha_l} 
\in \kk[[x_1,x_2,\dots]].\]
When $\alpha$ is the empty composition, set $M_\emptyset=1$.
For each $n \in \NN$, the set $\{ M_\alpha : \alpha \vDash n\}$
is a basis for a subspace  $\QSym_n\subset \kk[[x_1,x_2,\dots]]$.
The vector space of \emph{quasi-symmetric functions}
 $\QSym=\bigoplus_{n \in \NN} \QSym_n$
is a commutative subalgebra of $\kk[[x_1,x_2,\dots]]$.
This algebra is a graded Hopf algebra whose
coproduct 
and counit  are the linear maps with
 $ \Delta(M_\alpha)  = \sum_{\alpha = \beta \gamma} M_\beta \otimes M_\gamma$
and with
$\epsilon(M_\emptyset) = 1$ and $\epsilon(M_\alpha) =0 $ 
 for all compositions $\alpha \neq \emptyset$; see
\cite[\S3]{ABS}.

Each $\alpha \vDash n$
can be rearranged to form a partition  of $n$, denoted $ \sort(\alpha)$.
The \emph{monomial symmetric function} indexed by a partition $\lambda$
is $m_\lambda = \sum_{\sort(\alpha) = \lambda} M_\alpha.$
Write $\lambda \vdash n$ when $\lambda$ is a partition of $n$ and 
let $\Sym_n = \kk\spanning\{ m_\lambda : \lambda \vdash n\}$.
The subspace
$\Sym = \bigoplus_{n \in \NN} \Sym_n \subset \QSym$
is the familiar graded Hopf subalgebra of \emph{symmetric functions}.

\begin{definition}\label{cc-def}
A pair $(A,\zeta)$ is a \emph{combinatorial coalgebra}
if $(A,\Delta,\epsilon)$ is a graded coalgebra 
and $\zeta : A \to \kk$ is a linear map with $\zeta(a) = \epsilon(a)$ whenever $a\in A$ is homogeneous of degree zero.
A morphism of combinatorial coalgebras $(A,\zeta) \to (A',\zeta')$ is a graded coalgebra morphism $\Phi : A \to A'$
with $\zeta = \zeta'\circ \Phi$. 
A combinatorial coalgebra $(A,\zeta)$ in which $A$ is a graded bialgebra and $\zeta$ is an algebra morphism
is a \emph{combinatorial bialgebra}.
Morphisms of combinatorial bialgebras are morphisms of combinatorial coalgebras that are bialgebra morphisms.
\end{definition}

The objects in this definition are mild generalizations of combinatorial coalgebras 
and Hopf algebras as introduced in \cite{ABS}, which restricts to the case when $A$ is connected and has finite graded dimension.
For similar definitions of ``combinatorial'' monoidal structures in other categories, see, for example, \cite[\S5.4]{M0} and \cite{M2}.
Let
$\zetaq : \QSym \to \kk $ 
be the linear map with 
\[ 
\zetaq(M_\alpha) = \begin{cases} 1 &\text{if }\alpha =\emptyset \text{ or }\alpha = (n) \text{ for some }n \in \NN \\
0&\text{if $\alpha$ is any other composition}.\end{cases}
\]
Then $\zetaq$ is the restriction of the algebra morphism $\kk[[x_1,x_2,\dots]]\to \kk$
setting $x_1=1$ and $x_2=x_3=\dots =0$,
so $(\QSym,\zetaq)$ is a combinatorial bialgebra.

Let $\XX(\bfW)$ denote the set of linear maps
$\zeta : \bfW \to \kk$ 
for which $(\bfW,\zeta)$ is a combinatorial bialgebra,
i.e.,
with $\zeta([\emptyset,n]) = 1$ for all $n \in \NN$ and $\zeta \circ \nabla_\shuffle = \nabla_{\kk} \circ (\zeta\otimes \zeta)$.
If $\zeta \in \XX(\bfW)$ then $(\bfPi,\zeta)$ is also a combinatorial bialgebra.

For $l \in \NN$, let $\pi_l : \bfW \to \bfW$ be the natural projection onto the subspace of degree $l$ elements,
that is, the  linear map with 
$\pi_l([w,n]) = [w,n]$ if $\ell(w) = l$ and $\pi_l([w,n]) =0$ otherwise. 
Given a linear map $\zeta : \bfW \to \kk$ 
and a composition $\alpha =(\alpha_1,\alpha_2,\dots,\alpha_m)\neq \emptyset$, 
define $\zeta_\alpha$ to be the map
\be\label{zeta-def}
\bfW \xrightarrow{\ \Delta_\odot^{(m-1)}\ } \bfW^{\otimes m} \xrightarrow{\  \pi_{\alpha_1} \otimes \pi_{\alpha_2}\otimes \cdots \otimes \pi_{\alpha_m}\ } \bfW^{\otimes m} \xrightarrow{\ \zeta^{\otimes m}\ } \kk^{\otimes m} \xrightarrow{\ \nabla_\kk^{(m-1)}\ } \kk.
\ee
For the empty composition, let $\zeta_\emptyset = \epsilon_\odot$.


\begin{theorem}\label{unique-thm}
Suppose $\zeta \in \XX(\bfW)$.
The linear map
$\Psi : \bfW \to \QSym$ 
with
\be\label{psi-def} \Psi(x)= \sum_{\alpha} \zeta_\alpha(x) M_\alpha
\ee
for $x \in \bfW$, 
where the sum is over all compositions,
is then the unique morphism of combinatorial bialgebras $(\bfW,\zeta) \to (\QSym,\zetaq)$
and
$(\bfPi,\zeta) \to (\QSym,\zetaq)$.
\end{theorem}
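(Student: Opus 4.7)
The plan is to adapt the Aguiar-Bergeron-Sottile argument \cite{ABS} to our non-connected setting, proceeding in five steps: (i) well-definedness and grading of $\Psi$, (ii) the coalgebra morphism property, (iii) the identity $\zetaq \circ \Psi = \zeta$, (iv) uniqueness among graded coalgebra morphisms $\Phi$ with $\zetaq \circ \Phi = \zeta$, and (v) the algebra morphism property, obtained from (iv) by comparing two maps. Steps (i) and (iii) are quick: since $\Delta_\odot$ is graded, one has $\Delta_\odot^{(m-1)}(\bfW_n) \subseteq \bigoplus_{n_1 + \cdots + n_m = n} \bfW_{n_1} \otimes \cdots \otimes \bfW_{n_m}$, so the projections in \eqref{zeta-def} force $\zeta_\alpha$ to vanish on $\bfW_n$ whenever $|\alpha| \neq n$; hence $\Psi$ is well-defined and sends $\bfW_n$ into $\QSym_n$. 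For (iii), since $\zetaq(M_\alpha) = 1$ only for $\alpha = \emptyset$ or $\alpha = (n)$ with $n \geq 1$, one computes $\zetaq \circ \Psi(x) = \epsilon_\odot(x) + \sum_{n \geq 1} \zeta(\pi_n(x))$, and this equals $\zeta(x)$ on every homogeneous piece, the degree-zero case relying on $\zeta([\emptyset,k]) = 1 = \epsilon_\odot([\emptyset,k])$ from the definition of $\XX(\bfW)$.

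For (ii), both sides of $\Delta \circ \Psi = (\Psi \otimes \Psi) \circ \Delta_\odot$ expand as $\sum_{\beta, \gamma} f(\beta, \gamma)(x)\, M_\beta \otimes M_\gamma$. The left-hand side uses $\Delta(M_\alpha) = \sum_{\alpha = \beta\gamma} M_\beta \otimes M_\gamma$ to give $f(\beta, \gamma) = \zeta_{\beta\gamma}$, while the right-hand side gives $f(\beta, \gamma) = \nabla_\kk \circ (\zeta_\beta \otimes \zeta_\gamma) \circ \Delta_\odot$. These agree by coassociativity of $\Delta_\odot$ combined with the definition of $\zeta_\alpha$ as iterated coproduct followed by projections and $\zeta$; the counit identity is a direct check.

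For (iv), given any graded coalgebra morphism $\Phi : \bfW \to \QSym$ with $\zetaq \circ \Phi = \zeta$, write $\Phi(x) = \sum_\alpha c_\alpha(x) M_\alpha$ and apply the operator $T_\alpha := \nabla_\kk^{(m-1)} \circ \zetaq^{\otimes m} \circ (\pi_{\alpha_1} \otimes \cdots \otimes \pi_{\alpha_m}) \circ \Delta^{(m-1)}$ to both sides. Since $T_\alpha(M_\beta) = [\beta = \alpha]$ (the only surviving term in the iterated coproduct of $M_\beta$ requires each factor to be a one-part composition of the prescribed size), the operator extracts $c_\alpha(x)$; meanwhile $T_\alpha \circ \Phi = \zeta_\alpha$, using that $\Phi$ commutes with iterated coproducts and graded projections and that $\zetaq \circ \Phi = \zeta$. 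Hence $c_\alpha = \zeta_\alpha$ and $\Phi = \Psi$. This argument uses only the graded coalgebra structure, so it applies equally well with $\bfW$ replaced by any graded coalgebra equipped with a suitable $\zeta$. For (v), both $\Psi \circ \nabla_\shuffle$ and $\nabla_\QSym \circ (\Psi \otimes \Psi)$ are graded coalgebra morphisms $\bfW \otimes \bfW \to \QSym$, and composition with $\zetaq$ yields $\nabla_\kk \circ (\zeta \otimes \zeta) = \zeta \circ \nabla_\shuffle$ on both sides, the second equality using that $\zeta \in \XX(\bfW)$ is multiplicative. Applying the uniqueness of (iv) to the combinatorial coalgebra $\bfW \otimes \bfW$ forces the two maps to coincide. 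The analogous statement for $(\bfPi, \zeta)$ follows by restricting the whole argument to the sub-bialgebra $\bfPi$; the main new ingredient beyond \cite{ABS} is the bookkeeping around non-connectedness in Steps (i) and (iii), and the remaining arguments are essentially formal.
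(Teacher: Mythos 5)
Your argument is correct and follows the same overall strategy as the paper: establish the coalgebra-morphism and uniqueness parts first, then obtain the algebra-morphism property by applying uniqueness to the two maps $\Psi\circ\nabla_\shuffle$ and $\nabla_\QSym\circ(\Psi\otimes\Psi)$ on $\bfW\otimes\bfW$ (your step (v) is verbatim the paper's argument). The one genuine difference is that you re-derive the ABS terminality result from scratch via the operators $T_\alpha$, whereas the paper instead observes that each summand $\bfW_n$ (in the paper's notation, the span of $\Words_n$, which is connected of finite graded dimension) already satisfies the hypotheses of \cite[Theorem~4.1]{ABS} and applies that theorem directly; your uniform argument is closer in spirit to the more general extension deferred to \cite[\S7]{M2} (cited here as Theorem~\ref{unproved-thm}). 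One small caution: in step (i) you write $\Delta_\odot^{(m-1)}(\bfW_n)\subseteq\bigoplus_{n_1+\cdots+n_m=n}\bfW_{n_1}\otimes\cdots\otimes\bfW_{n_m}$ with $\bfW_n$ apparently meaning the degree-$n$ graded piece (by word length); this clashes with the paper's use of $\bfW_n$ for the span of $\Words_n$, on which $\Delta_\odot$ acts by $\bfW_n\to\bfW_n\otimes\bfW_n$. The mathematics is fine once one reads $\bfW_n$ as the degree piece throughout your step (i), but you should reconcile the notation.
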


The sum defining $\Psi$ has only finitely many nonzero terms
since $\zeta_\alpha([w,n]) \neq 0$ implies $\alpha \vDash \ell(w)$.

\begin{proof}
Both $(\bfW_n,\zeta)$ and $(\bfPi_n,\zeta)$ are combinatorial coalgebras in the sense of \cite{ABS},
which is slightly more specific than our definition.
By \cite[Theorem 4.1]{ABS}, $(\QSym,\zetaq)$ is the terminal object in the category of combinatorial coalgebras
and
 \eqref{psi-def}
is the unique morphism $(\bfW_n,\zeta) \to (\QSym,\zetaq)$; 
this is a bialgebra morphism $\bfW \to \QSym$
since the maps
\[\bfW_m \otimes \bfW_n \xrightarrow{\ \Psi\otimes \Psi\ } \QSym \otimes \QSym \xrightarrow{\ \nabla\ } \QSym
\quand
\bfW_m \otimes \bfW_n \xrightarrow{\ \nabla_\shuffle\ } \bfW_{m+n} \xrightarrow{\ \Psi\ } \QSym\]
are both morphisms $(\bfW_m\otimes \bfW_n, \nabla\circ (\zeta\otimes \zeta)) \to (\QSym,\zetaq)$.
The restriction of $\Psi$ is a morphism of combinatorial bialgebras $(\bfPi,\zeta) \to (\QSym,\zetaq)$, so must be the unique such morphism.
\end{proof}

Theorem~\ref{unique-thm} is a special case of the following more general fact:

\begin{theorem}[\cite{ABS}; see \cite{M2}] \label{unproved-thm}
Suppose $(B,\zeta)$ is a combinatorial bialgebra.
Define $\Psi : B \to \QSym$ by the formula \eqref{psi-def}, where 
$\zeta_\alpha : B \to \kk$ is the map \eqref{zeta-def}
with $\bfW$ and $\Delta_\odot$ replaced by $B$ and its coproduct.
Then $\Psi$ is the unique morphism of combinatorial bialgebras $(B,\zeta)\to(\QSym,\zetaq)$.
\end{theorem}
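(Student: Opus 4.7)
The plan is to mimic exactly the proof of Theorem~\ref{unique-thm}, replacing the concrete bialgebra $\bfW$ by an arbitrary combinatorial bialgebra $(B,\zeta)$. The argument splits into four steps: well-definedness of $\Psi$, the coalgebra morphism property together with the identity $\zeta = \zetaq \circ \Psi$, the algebra morphism property, and uniqueness.

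Well-definedness is immediate: if $x \in B$ is homogeneous of degree $n$, then the graded coproduct sends $\Delta^{(m-1)}(x)$ into $\bigoplus_{\beta_1 + \cdots + \beta_m = n} B_{\beta_1} \otimes \cdots \otimes B_{\beta_m}$, so after applying $\pi_{\alpha_1} \otimes \cdots \otimes \pi_{\alpha_m}$ the result vanishes unless $\alpha \vDash n$. Hence $\Psi(x) = \sum_{\alpha \vDash n} \zeta_\alpha(x) M_\alpha$ lies in $\QSym_n$ and is a finite sum. The coalgebra morphism property, together with $\zeta = \zetaq \circ \Psi$ and the uniqueness of $\Psi$ as a graded coalgebra morphism $(B,\zeta) \to (\QSym,\zetaq)$, then follows from Theorem~4.1 of \cite{ABS}. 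Strictly speaking, that result is stated for connected graded coalgebras of finite graded dimension, but its proof uses only coassociativity of $\Delta_B$, the compatibility $\zeta(a) = \epsilon(a)$ for $a$ of degree zero, and the explicit coproduct and character of $\QSym$. None of these inputs requires connectedness or finite-dimensionality, so the argument transfers verbatim, as also indicated in \cite{M2}.

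To upgrade $\Psi$ from a coalgebra morphism to a bialgebra morphism, I would equip $B \otimes B$ with its tensor-product graded coalgebra structure and the character $\nabla_\kk \circ (\zeta \otimes \zeta) \colon B \otimes B \to \kk$, turning it into a combinatorial coalgebra. Both composites
\[
B \otimes B \xrightarrow{\ \Psi\otimes\Psi\ } \QSym \otimes \QSym \xrightarrow{\ \nabla\ } \QSym
\quad\text{and}\quad
B \otimes B \xrightarrow{\ \nabla_B\ } B \xrightarrow{\ \Psi\ } \QSym
\]
are then morphisms of combinatorial coalgebras to $(\QSym,\zetaq)$: for the first, this uses that $\nabla_\QSym$ is a graded coalgebra morphism (since $\QSym$ is a bialgebra) and that $\zetaq$ is an algebra morphism; for the second, this uses that $\nabla_B$ is a graded coalgebra morphism (since $B$ is a bialgebra) and that $\zeta$ is an algebra morphism. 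By the previous uniqueness statement, the two composites must coincide, which is exactly the multiplicativity of $\Psi$. Compatibility with units follows from $\zeta \circ \iota_B = \id_\kk$, and uniqueness of $\Psi$ as a morphism of combinatorial bialgebras is inherited from the uniqueness already established at the level of combinatorial coalgebras.

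The main obstacle is confirming that the ABS universal property genuinely extends beyond its original connected, finite-graded-dimension setting. I expect this to amount to a line-by-line check of the original ABS proof, verifying that none of its recursive computations tacitly invoke finiteness or connectedness; no fundamentally new idea should be required, which is why this generalization can be attributed to \cite{ABS} and \cite{M2} without a standalone proof.
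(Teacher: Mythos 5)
Your proposal is correct and matches exactly the strategy the paper indicates: mirror the proof of Theorem~\ref{unique-thm} with $\bfW$ replaced by $B$, use the tensor-product trick to upgrade the coalgebra morphism to a bialgebra morphism, and defer the one nontrivial step — extending ABS's terminal-object theorem beyond the connected, finite-graded-dimension setting — which is precisely the content of \cite[\S7]{M2} that the paper cites. One small point worth noting: in the paper's proof of Theorem~\ref{unique-thm} the ABS theorem is applied literally to the \emph{connected, finite-graded-dimensional} coalgebras $\bfW_n$, whereas for a general $B$ no such direct-sum decomposition into connected pieces is available, so your line-by-line verification that ABS's argument never invokes connectedness or finiteness (in particular that it goes through for the auxiliary combinatorial coalgebra $(B\otimes B,\nabla_\kk\circ(\zeta\otimes\zeta))$ needed in the multiplicativity step) is genuinely the crux, and your sketch correctly flags it rather than sweeping it under the rug.
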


This is a minor extension of \cite[Theorem 4.1]{ABS} and \cite[Remark 4.2]{ABS},
and holds by essentially the same arguments; a detailed proof is included in \cite[\S7]{M2}.

The are four particularly natural elements $\zeta \in \XX(\bfW)$, for which we can describe the map $\Psi$ explicitly.
First, let 
$\zeta_{\leq}: \bfW \to \kk$
be the linear map  with
\be
\label{same-eq}
\zeta_{\leq}([w,n]) =  \begin{cases} 1 &\text{if $w$ is weakly increasing}
\\
0&\text{otherwise}
\end{cases}
\qquad\text{for $[w,n] \in \Words$.}
\ee
Define 
$ \zeta_{\geq}$, $\zeta_{<}$, $\zeta_>$ to be the linear maps $\bfW \to \kk$
given by the same formula but with ``weakly increasing''
replaced by ``weakly decreasing,'' ``strictly increasing,'' and ``strictly decreasing.'' 

\begin{proposition}
For each symbol $\bullet \in \{ {\leq}, {\geq}, {<}, {>}\}$,
it holds that $\zeta_\bullet \in \XX(\bfW)$.
\end{proposition}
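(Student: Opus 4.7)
The plan is to check both conditions required for $(\bfW,\zeta_\bullet)$ to be a combinatorial bialgebra: namely, $\zeta_\bullet([\emptyset,n]) = 1$ for all $n\in\NN$ and the multiplicativity identity $\zeta_\bullet \circ \nabla_\shuffle = \nabla_\kk \circ (\zeta_\bullet \otimes \zeta_\bullet)$. The unit condition is immediate, since the empty word is vacuously weakly increasing, weakly decreasing, strictly increasing, and strictly decreasing; so $\zeta_\bullet([\emptyset,n]) = 1$ for every $n\in\NN$ and every choice of $\bullet$.

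For multiplicativity, I would fix $[v,m]\in\Words_m$ and $[w,n]\in\Words_n$ and use the formula $\nabla_\shuffle([v,m]\otimes [w,n]) = [v\shuffle (w\uparrow m),\, m+n]$ together with the key separation observation: every letter of $v$ lies in $\{1,2,\dots,m\}$ while every letter of $w\uparrow m$ lies in $\{m+1,m+2,\dots,m+n\}$. Thus the letters coming from the two factors are strictly separated by the threshold $m$.

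For $\bullet \in\{\leq,\;<\}$, any weakly (resp.\ strictly) increasing word in the multiset $v\shuffle(w\uparrow m)$ must place all letters $\leq m$ before all letters $> m$; this forces the position set to be $I=\{1,2,\dots,m\}$, and so the unique candidate is the concatenation $v\cdot(w\uparrow m)$. That concatenation is weakly (resp.\ strictly) increasing if and only if both $v$ and $w$ are, because the transition letter from $v_m \leq m$ to $w_1+m > m$ is automatically strict. Hence the total contribution to $\zeta_\bullet\(\nabla_\shuffle([v,m]\otimes[w,n])\)$ is $1$ when both factors satisfy the monotonicity and $0$ otherwise, matching $\zeta_\bullet([v,m])\,\zeta_\bullet([w,n])$. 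Symmetrically, for $\bullet \in \{\geq,\;>\}$, weak (resp.\ strict) monotonicity forces the letters of $w\uparrow m$ to precede those of $v$, forcing $I=\{n+1,\dots,m+n\}$ and the unique candidate $(w\uparrow m)\cdot v$; this is decreasing exactly when both $v$ and $w$ are.

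There is no real obstacle beyond checking that the candidate monotone shuffle really does occur with multiplicity exactly one, which follows because the separating position $I$ is uniquely determined by the monotonicity constraint together with the threshold $m$ separating the letters of $v$ from those of $w\uparrow m$. Combining the two bullet points gives $\zeta_\bullet \circ \nabla_\shuffle = \nabla_\kk \circ (\zeta_\bullet \otimes \zeta_\bullet)$ for all four choices of $\bullet$, completing the proof.
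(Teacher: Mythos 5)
Your proof is correct and is essentially the detailed version of what the paper leaves as a one-line remark (``which is routine''). You fill in the routine verification precisely by exploiting that the letters of $v$ lie in $\{1,\dots,m\}$ while those of $w\uparrow m$ lie in $\{m+1,\dots,m+n\}$, forcing the unique potentially-monotone shuffle to be a concatenation, and then observing that the transition across the threshold $m$ is automatically compatible; this is exactly the check the author had in mind.
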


\begin{proof}
It suffices to check that $\zeta_\bullet\circ \nabla_\shuffle([v,m]\otimes [w,n]) =
\zeta_\bullet([v,m])\zeta_\bullet([w,n])$, which is routine.
\end{proof}

Given  $\alpha = (\alpha_1,\alpha_2,\dots,\alpha_l) \vDash n$, let
$I(\alpha) = \{\alpha_1, \alpha_1 + \alpha_2, \dots ,\alpha_1 +\alpha_2+ \dots + \alpha_{l-1}\}.$
The map $\alpha \mapsto I(\alpha)$
is a bijection from compositions of $n$ to subsets of $[n-1]$.
Write $\alpha \leq \beta$ if $\alpha,\beta\vDash n$ and $I(\alpha) \subseteq I(\beta)$.
The \emph{fundamental quasi-symmetric function} associated to $\alpha \vDash n$ is 
\[
L_\alpha = \sum_{\alpha\leq \beta} M_\beta 
=
 \sum_{\substack{i_1 \leq i_2\leq \dots\leq i_n \\ i_j < i_{j+1}\text{ if }j \in I(\alpha)}} x_{i_1}x_{i_2}\cdots x_{i_n}\in \QSym_n.
\]
The set $\{L_\alpha : \alpha \vDash n\}$ is
another basis of $\QSym_n$.
Given $\alpha = (\alpha_1,\alpha_2,\dots,\alpha_l) \vDash n$,
let  $\beta \vDash n$ be such that $I(\beta) = \{1,2,\dots,n-1\} \setminus I(\alpha)$
and
define the \emph{reversal}, \emph{complement}, and \emph{transpose} of $\alpha$ to be
\[\alpha^\r = (\alpha_l,\dots,\alpha_2,\alpha_1),
\qquad
\alpha^\c =\beta,
\qquand \alpha^\t = (\alpha^\r)^\c = (\alpha^\c)^\r.
\] 
For a word $w=w_1w_2\cdots w_n$, define 
$w^\r = w_n\cdots w_2w_1$
and
$\Des(w) = \{ i \in [n-1] : w_i > w_{i+1}\}.$
Finally, for each $\bullet \in \{{\leq}, {\geq}, {<}, {>}\}$, 
let $\Psi_\bullet $ 
be the map \eqref{psi-def} defined relative to $\zeta=\zeta_\bullet$.

\begin{proposition}\label{words-opsi-prop}
If  $[w,n] \in \Words$ and $\alpha \vDash \ell(w)$ are such that $\Des(w) = I(\alpha)$ then
\[
\Psi_{\leq}([w,n]) = L_\alpha,
\quad 
\Psi_{>}([w,n]) = L_{\alpha^\c},
\quad
\Psi_{\geq}([w^\r,n]) = L_{\alpha^\r},
\quand
\Psi_{<}([w^\r,n]) = L_{\alpha^\t}.
\]
\end{proposition}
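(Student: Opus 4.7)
The plan is to unwind the definition \eqref{zeta-def} of $\zeta_{\bullet,\beta}$ on a basis element $[w,n]$ or $[w^\r,n]$ and then recognize the resulting sum over compositions as a fundamental quasi-symmetric function. No new machinery is needed; the work is essentially a bookkeeping exercise.

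First, I would iterate the coproduct formula in \eqref{odot-maps}: the map $\Delta_\odot^{(m-1)}$ sends $[w,n]$ to the sum over all ways to split $w$ into $m$ consecutive (possibly empty) subwords $w = w^{(1)}\cdots w^{(m)}$ of the tensor $[w^{(1)},n]\otimes\cdots\otimes[w^{(m)},n]$. Applying the projectors $\pi_{\beta_1}\otimes\cdots\otimes\pi_{\beta_m}$ selects the unique such splitting with $\ell(w^{(i)}) = \beta_i$, so
\[
\zeta_{\bullet,\beta}([w,n]) = \prod_{i=1}^m \zeta_\bullet([w^{(i)},n]) \in \{0,1\},
\]
equal to $1$ exactly when each piece $w^{(i)}$ has the prescribed monotonicity. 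Translating piece-wise monotonicity into a containment condition, a subword $w_{j+1}\cdots w_k$ is weakly increasing iff $\Des(w)\cap\{j+1,\ldots,k-1\}=\emptyset$, and strictly decreasing iff $\{j+1,\ldots,k-1\}\subseteq\Des(w)$. Hence when $\Des(w)=I(\alpha)$, summing over splitting positions $I(\beta)$ converts these two conditions into $\alpha\leq\beta$ and $\alpha^\c\leq\beta$ respectively; summing $M_\beta$ over the relevant set of $\beta$ then yields $L_\alpha$ and $L_{\alpha^\c}$.

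For the last two identities I would repeat the argument with $w^\r$ in place of $w$, relying on the identity $\Asc(w^\r) = \{\ell(w)-i : i \in \Des(w)\}$, which equals $I(\alpha^\r)$ when $\Des(w) = I(\alpha)$. The analogous characterizations (weakly decreasing iff $\Asc(w^\r)$ avoids the interior of each piece, strictly increasing iff it covers the interior) convert the conditions to $\alpha^\r\leq\beta$ and $(\alpha^\r)^\c = \alpha^\t \leq\beta$, producing $L_{\alpha^\r}$ and $L_{\alpha^\t}$. The only real obstacle is keeping the reversal and complement operations straight when passing between ascents and descents, but once the identity for $\Asc(w^\r)$ is in hand the remaining manipulation is routine.
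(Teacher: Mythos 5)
Your proof is correct and takes essentially the same approach as the paper, which after stating that the first identity follows from inspecting the formulas for $\Psi$ and $\zeta_{\leq}$ simply remarks that one derives the other identities similarly; you are just spelling out the same bookkeeping (iterated coproduct, projectors selecting one splitting, monotonicity conditions on pieces translated to containment conditions on $\Des(w)$ or $\Asc(w^\r)$) in more detail, and the details check out.
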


\begin{proof}
The first identity is well-known and follows by inspecting \eqref{psi-def} and \eqref{same-eq}.
In detail, equations  \eqref{psi-def} and \eqref{same-eq} show that $\Psi_{\leq}([w,n]) = \sum_\alpha M_\alpha$
where the sum is over all compositions whose parts record the lengths of the factors in
a decomposition of $w$ into weakly increasing subwords, that is,
the compositions $\alpha \vDash \ell(w)$ with $\Des(w) \subset I(\alpha)$.
One derives the other identities similarly.
\end{proof}

The set $\XX(\bfW)$ is a monoid with unit element $\iota_\shuffle \circ \epsilon_\odot$
and product $\zeta \zeta ' := \nabla_\kk \circ (\zeta \otimes \zeta') \circ \Delta_\odot$.
For any symbols $\bullet,\circ \in \{ {\leq}, {\geq}, {<}, {>}\}$ define
$\zeta_{\bullet|\circ} = \zeta_\bullet\zeta_\circ \in \XX(\bfW)$
and let $\Psi_{\bullet|\circ}$ be the unique morphism $(\bfW,\zeta_{\bullet|\circ} ) \to (\QSym,\zetaq)$
given by \eqref{psi-def} for $\zeta = \zeta_{\bullet|\circ}$.
For example, if $[w,n] \in \Words$ then
\be
\label{up-down-eq} \zeta_{>|\leq}([w,n]) = \begin{cases} 1 & \text{if }w=\emptyset, \\
2&\text{if }w_1>\dots>w_i \leq w_{i+1} \leq \dots \leq w_m \text{ where $1 \leq i \leq m=\ell(w)$} \\
0&\text{otherwise}.
\end{cases}
\ee
Similar formulas hold for the other possibilities of $\zeta_{\bullet|\circ} $.

One calls $\alpha = (\alpha_1,\alpha_2,\dots,\alpha_l) \vDash n$
a \emph{peak composition} if $\alpha_i\geq 2$ for $1 \leq i < l$, i.e., if $1 \notin I(\alpha)$ and  $i \in I(\alpha)$ $\Rightarrow $ $i\pm 1 \notin I(\alpha)$.
The number of peak compositions of $n$ is the $n$th Fibonacci number.
The \emph{peak quasi-symmetric function} \cite[Proposition 2.2]{Stem} of a peak composition $\alpha \vDash n$ is 
\[
K_\alpha = \sum_{\substack{
\beta \vDash n
\\
I(\alpha) \subset I(\beta) \cup (I(\beta)+1)
}} 2^{\ell(\beta)} M_\beta
=
 \sum_{\substack{i_1 \leq i_2\leq \dots\leq i_n \\ j \notin I(\alpha)\text{ if }i_{j-1}=i_j=i_{j+1}}} 2^{|\{i_1,i_2,\dots,i_n\}| }x_{i_1}x_{i_2}\cdots x_{i_n}
 \in \QSym_n.
\]
If $\alpha = (2,2)\vDash 4$, for example, then $I(\alpha) = \{2\}$
and
 the subsets
$I(\beta)\subset \{1,2,3\}$ that satisfy
$I(\alpha) \subset I(\beta) \cup (I(\beta)+1)$
are $\{1\}$, $\{2\}$, $\{1,2\}$, $\{1,3\}$,  $\{2,3\}$, and $\{1,2,3\}$, so 
\[
K_{(2,2)} = 4 M_{(1,3)} + 4 M_{(2,2)}+ 8M_{(1,1,2)} + 8M_{(1,2,1)} +  8M_{(2,1,1)} + 16M_{(1,1,1,1)} .
\]
If $\kk$ does not have characteristic two,
then the functions $K_\alpha$ (with $\alpha$ a peak composition) are a basis for a graded Hopf subalgebra of $\QSym$ \cite[Proposition 6.5]{ABS},
which we denote by
\[ \OQSym = \kk\spanning\{ K_\alpha : \alpha\text{ is a peak composition} \}.\]
Given a peak composition $\alpha = (\alpha_1,\alpha_2,\dots,\alpha_l)$,
define $\alpha^{\rpk} = (\alpha_l+1, \alpha_{l-1},\dots,\alpha_2,\alpha_1-1)$.

\begin{lemma}\label{flat-lem}
The linear map $\QSym \to \QSym$ with $L_{\beta} \mapsto L_{\beta^\r}$ for all compositions $\beta$
has $K_\alpha \mapsto K_{\alpha^\rpk}$ for each peak composition $\alpha$.
\end{lemma}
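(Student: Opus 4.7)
The plan is to expand both sides of the claimed identity in the monomial basis and compare coefficients directly. The argument rests on one preliminary observation: the linear map $\phi : L_\beta \mapsto L_{\beta^\r}$ coincides with the map $\psi : M_\beta \mapsto M_{\beta^\r}$. This holds because composition reversal corresponds, at the level of descent sets of compositions of $n$, to the order-preserving involution $D \mapsto n - D$ on subsets of $[n-1]$. Re-indexing the expansion $L_\alpha = \sum_{\beta : I(\alpha) \subseteq I(\beta)} M_\beta$ by $\gamma = \beta^\r$ then gives
\[
\phi(L_\alpha) = L_{\alpha^\r} = \sum_{\beta : I(\alpha^\r) \subseteq I(\beta)} M_\beta = \sum_{\gamma : I(\alpha) \subseteq I(\gamma)} M_{\gamma^\r} = \psi(L_\alpha),
\]
so $\phi = \psi$ on all of $\QSym$.

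Next, I would apply $\psi$ termwise to the monomial expansion of $K_\alpha$ given in the text. This yields
\[
\psi(K_\alpha) = \sum_{\substack{\beta \vDash n \\ I(\alpha) \subseteq I(\beta) \cup (I(\beta)+1)}} 2^{\ell(\beta)} M_{\beta^\r},
\]
and after re-indexing by $\gamma := \beta^\r$ (so that $\ell(\gamma) = \ell(\beta)$), this becomes a sum of $2^{\ell(\gamma)} M_\gamma$ over compositions $\gamma \vDash n$ satisfying $I(\alpha) \subseteq I(\gamma^\r) \cup (I(\gamma^\r)+1)$.

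Finally, I would verify two short set-theoretic facts. First, by a direct calculation from the definitions of $\alpha^\rpk$ and $I(\cdot)$, one has $I(\alpha^\rpk) = (n+1) - I(\alpha)$. Second, since $I(\gamma^\r) = n - I(\gamma)$ and hence $I(\gamma^\r) + 1 = (n+1) - I(\gamma)$, we obtain the identity
\[
I(\gamma^\r) \cup (I(\gamma^\r)+1) = (n+1) - \bigl(I(\gamma) \cup (I(\gamma)+1)\bigr).
\]
Applying the bijection $x \mapsto (n+1)-x$ to both sides of an inclusion then shows that $I(\alpha) \subseteq I(\gamma^\r) \cup (I(\gamma^\r)+1)$ is equivalent to $I(\alpha^\rpk) \subseteq I(\gamma) \cup (I(\gamma)+1)$. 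The sum from the previous step therefore matches the monomial expansion of $K_{\alpha^\rpk}$ term by term, completing the proof.

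The main obstacle is the bookkeeping in the third step: the ``$+1$'' shift in the peak condition interacts with reversal to produce the shift $D \mapsto (n+1)-D$ rather than the naive $D \mapsto n-D$ arising from plain composition reversal, which is precisely why the target on peak compositions is $\alpha^\rpk$ and not $\alpha^\r$. Once the indexing is pinned down, all remaining manipulations reduce to routine complements and unions of subsets of $[n-1]$.
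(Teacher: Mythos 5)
Your proposal is correct and takes essentially the same route as the paper: first identify the map $L_\beta \mapsto L_{\beta^\r}$ with $M_\beta \mapsto M_{\beta^\r}$, then verify the equivalence $I(\alpha) \subseteq I(\beta) \cup (I(\beta)+1) \iff I(\alpha^\rpk) \subseteq I(\beta^\r) \cup (I(\beta^\r)+1)$ on the indexing sets. You just spell out the set-theoretic bookkeeping (via $I(\gamma^\r) = n - I(\gamma)$ and $I(\alpha^\rpk) = (n+1) - I(\alpha)$) that the paper leaves implicit.
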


\begin{proof}
The given map has $M_{\beta} \mapsto M_{\beta^\r}$ for all $\beta$,
so the result holds since $I(\alpha) \subset I(\beta) \cup (I(\beta)+1)$ if and only if 
$I(\alpha^\rpk) \subset I(\beta^\r) \cup(I(\beta^\r)+1)$ for all $\alpha,\beta\vDash n$
where $\alpha$ is a peak composition.
\end{proof}

For a word $w=w_1w_2\cdots w_n$, define the sets
$\Peak(w) = \{ i : 1<i<n,\ w_{i-1} \leq w_i > w_{i+1}\}$
and
$\Valley(w) = \{ i: 1< i < n,\ w_{i-1} \geq w_i < w_{i+1}\}$.
For $\alpha \vDash n$, let $\Lambda(\alpha)\vDash n$ be the peak composition with
\[I(\Lambda(\alpha)) = \{ i\geq 2 : i \in I(\alpha),\ i-1\notin I(\alpha)\}.\]
If $w$ is a word and $\alpha \vDash \ell(w)$ and $\Des(w) = I(\alpha)$, then $\Peak(w) = I(\Lambda(\alpha))$.

\begin{proposition}\label{words-kpsi-prop}
If  $[w,n] \in \Words$ and  $\alpha,\beta \vDash \ell(w)$ have $\Peak(w) = I(\alpha)$ and $\Valley(w) = I(\beta)$ then
\[
\Psi_{>|\leq}([w,n]) = K_\alpha,
\text{\ \ }
\Psi_{<|\geq}([w,n]) = K_\beta,
\text{\ \ }
\Psi_{\geq|<}([w^\r,n]) = K_{\alpha^{\rpk}},
\text{\ \ and\ \ }
\Psi_{\leq|>}([w^\r,n]) = K_{\beta^{\rpk}}.
\]
\end{proposition}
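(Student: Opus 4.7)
The plan is to expand $\Psi_{>|\leq}([w,n])$ directly via the formula \eqref{psi-def} (which, by construction of $\Psi_{>|\leq}$, gives $\sum_\gamma \zeta_{>|\leq,\gamma}([w,n])\,M_\gamma$) and match the result against the defining expansion of $K_\alpha$. For a composition $\gamma=(\gamma_1,\dots,\gamma_k)\vDash \ell(w)$, let $u^{(1)},\dots,u^{(k)}$ be the blocks obtained by cutting $w$ at the positions in $I(\gamma)$. Unwinding \eqref{zeta-def} gives $\zeta_{>|\leq,\gamma}([w,n]) = \prod_i \zeta_{>|\leq}([u^{(i)},n])$, and by \eqref{up-down-eq} each factor equals $2$ when $u^{(i)}$ is nonempty and of the form ``strict decrease then weak increase,'' and $0$ otherwise. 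A short check shows that a nonempty word $u$ has this form if and only if $\Des(u)$ is an initial segment of $\{1,2,\dots,\ell(u)-1\}$, equivalently, $u$ contains no interior peak (no index $i$ with $u_{i-1}\leq u_i > u_{i+1}$).

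Next I would translate the block-level condition ``no interior peak'' into a condition on $\gamma$. Every peak of $w$ at position $p$ survives as an interior peak of the block containing $p$ unless it is absorbed at a block boundary, which happens exactly when $p-1\in I(\gamma)$ or $p\in I(\gamma)$, i.e., $p\in I(\gamma)\cup (I(\gamma)+1)$; conversely, any interior peak of a block is automatically a peak of $w$. Hence all blocks are peak-free iff $\Peak(w)\subset I(\gamma)\cup (I(\gamma)+1)$. Since consecutive positions cannot both be peaks, $I(\alpha):=\Peak(w)$ is the descent set of a peak composition $\alpha\vDash\ell(w)$, and substituting into \eqref{psi-def} gives
\[
\Psi_{>|\leq}([w,n]) \;=\; \sum_{\substack{\gamma\vDash \ell(w) \\ \Peak(w)\subset I(\gamma)\cup(I(\gamma)+1)}} 2^{\ell(\gamma)}\, M_\gamma \;=\; K_\alpha.
\]
The identity $\Psi_{<|\geq}([w,n]) = K_\beta$ with $I(\beta) = \Valley(w)$ then follows by the same argument after reversing every inequality: peaks are replaced by valleys throughout, and $\zeta_{<|\geq}$ is nonzero on a nonempty word $u$ exactly when $u$ has no interior valley.

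For the two reversal identities I would reduce to the cases above via Lemma~\ref{flat-lem}. Reversing each piece of a split sets up a bijection between splittings $u^\r = v'v''$ with $v'$ weakly decreasing and $v''$ strictly increasing and splittings $u = (v'')^\r (v')^\r$ with a strictly decreasing prefix and weakly increasing suffix, which yields $\zeta_{\geq|<}([u^\r,n]) = \zeta_{>|\leq}([u,n])$ for every word $u$. Applied block-by-block, and observing that the $\delta^\r$-decomposition of $w^\r$ consists of the reversals of the blocks of the $\delta$-decomposition of $w$ taken in reverse order, this gives $\zeta_{\geq|<,\delta^\r}([w^\r,n]) = \zeta_{>|\leq,\delta}([w,n])$. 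Re-indexing the sum \eqref{psi-def} by $\delta^\r$ then shows that $\Psi_{\geq|<}([w^\r,n])$ is the image of $\Psi_{>|\leq}([w,n]) = K_\alpha$ under the involution $M_\delta\mapsto M_{\delta^\r}$, hence equals $K_{\alpha^\rpk}$ by Lemma~\ref{flat-lem}. The last identity $\Psi_{\leq|>}([w^\r,n]) = K_{\beta^\rpk}$ follows by the same argument with valleys replacing peaks. The main obstacle is the bookkeeping step identifying precisely when a block boundary absorbs a peak (or valley) of $w$; once that is in place, the rest is a mechanical unwinding of the universal formula \eqref{psi-def}.
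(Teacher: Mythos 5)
Your proof is correct, and it takes a genuinely different route from the paper's.

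The paper does not compute $\Psi_{>|\leq}$ from scratch. Instead, it invokes the morphism $\Theta : \QSym \to \OQSym$ from \cite[Example~4.9]{ABS} defined by $\Theta(L_\alpha) = K_{\Lambda(\alpha)}$, together with the fact that $\Theta$ is the unique morphism of combinatorial bialgebras $(\QSym,\nu_{\QSym}) \to (\QSym,\zetaq)$. The key check is the one-line identity $\zetaq \circ \Theta \circ \Psi_\leq = \nu_{\QSym}\circ\Psi_\leq = \zeta_{>|\leq}$ (using Proposition~\ref{words-opsi-prop}), which by uniqueness forces $\Psi_{>|\leq} = \Theta \circ \Psi_\leq$; the formula then falls out from $\Psi_\leq([w,n]) = L_\alpha$ and a translation from $\Lambda(\alpha)$ to the peak set of $w$. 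Your argument, by contrast, unwinds \eqref{psi-def} directly: you cut $w$ into blocks according to $\gamma$, observe that $\zeta_{>|\leq}$ returns $2$ on a nonempty block exactly when that block has no interior peak, and then show that every block is peak-free precisely when $\Peak(w)\subset I(\gamma)\cup(I(\gamma)+1)$, which is literally the condition in the $M$-expansion defining $K_\alpha$. The trade-off: the paper's proof is shorter and piggy-backs on the ABS machinery (terminal object plus the known factorization of the peak algebra morphism through $\Theta$), whereas yours is longer in bookkeeping but self-contained, requires nothing beyond the formulas \eqref{zeta-def}, \eqref{psi-def}, \eqref{up-down-eq} and the $M$-expansion of $K_\alpha$, and makes the combinatorics of absorbed peaks explicit. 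Both proofs use Lemma~\ref{flat-lem} for the two reversal identities; you supply the block-reversal bijection giving $\zeta_{\geq|<,\gamma}([w^\r,n]) = \zeta_{>|\leq,\gamma^\r}([w,n])$, which the paper leaves implicit.
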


\begin{proof}
Let 
$\Theta : \QSym \to \OQSym $
be the linear map with 
$\Theta(L_\alpha) = K_{\Lambda(\alpha)}$ for all compositions $\alpha$.
The linear map
$\nu_{\QSym} : \QSym \to \kk$  
whose value at $L_\alpha$ is 1 if $\alpha = \emptyset$, 2 if $\alpha$ has the form $(1,1,\dots,1,k)$, and 0 otherwise
is an algebra morphism,
and $\Theta$ is the unique morphism of combinatorial bialgebras $(\QSym,\nu_{\QSym}) \to (\QSym,\zetaq)$ \cite[Example 4.9]{ABS}.

Using Proposition~\ref{words-opsi-prop}, it is easy to check
that $\zetaq \circ \Theta \circ \Psi_{\leq} = \nu_{\QSym} \circ \Psi_{\leq} = \zeta_{>|\leq} $,
so $ \Theta \circ \Psi_{\leq} $ is a  morphism of combinatorial bialgebras  
$(\bfW, \zeta_{>|\leq}) \to (\QSym,\zetaq)$.
Since there is only one such morphism,
we have $\Psi_{>|\leq} = \Theta \circ \Psi_{\leq}$
which implies the first identity.
Similarly, as $\zetaq \circ \Theta \circ \Psi_{\geq} = \zeta_{<|\geq}$,
we have $\Psi_{<|\geq}([w,n]) = \Theta\circ \Psi_{\geq}([w,n]) = K_{\Lambda(\alpha^\r)}$
for the composition $\alpha \vDash \ell(w)$ such that $I(\alpha) = \Des(w^\r)$. Tracing through the definitions,
one finds that $\Lambda(\alpha)$ is precisely the composition $\beta \vDash\ell(w)$ with $\Valley(w) = I(\beta)$.
The other identities follow  by Lemma~\ref{flat-lem}.
\end{proof}

\section{Stanley symmetric functions}\label{stan-sect}

In this section, we assume $\kk$ has characteristic zero, so that $\kk$
contains the rational numbers $\QQ$.

\subsection{Type A}\label{type-a-sect}

The \emph{Stanley symmetric function} of a permutation $\pi \in S_n$  with $l=\ell(\pi)$ is the power series
\be\label{stan-def}
 F_\pi  = \sum_{w \in \cR(\pi)} \sum_{ \substack{i_1\leq i_2 \leq \dots \leq i_{l} \\  i_j < i_{j+1} \text{ if }w_j<w_{j+1}}} x_{i_1}x_{i_2}\cdots x_{i_{l}}  \in \kk[[x_1,x_2,\dots]].
 \ee
This definition differs from Stanley's original construction in \cite{Stan}
by an inversion of indices, a common convention in subsequent literature.
As the name suggests, one has $F_\pi \in \Sym$ \cite{Stan}; more strongly,
each $F_\pi \in \NN\spanning\{ s_\lambda : \lambda  \vdash \ell(\pi)\}$ is \emph{Schur positive} \cite{EdelmanGreene}.
Among other reasons, these functions are of interest since they are the stable limits of the Schubert polynomials $\fk S_\pi$,
and since the coefficient of any square-free monomial in $F_\pi$ gives the size of $\cR(\pi)$.

The definition \eqref{stan-def} is more canonical than it first appears: $F_\pi$ is the image of $[\pi] \in \bf\Pi$
under the unique morphism of combinatorial bialgebras $(\bfPi,\zeta_>) \to (\QSym,\zetaq)$. To be precise, 
let
$\omega : \Sym \to \Sym$ be the 
linear map 
with $\omega(s_\lambda) = s_{\lambda^T}$, where $\lambda^T$ denotes the usual transpose of a partition.
This map is the restriction 
of the linear map $\QSym \to\QSym$ with $L_{\alpha} \mapsto L_{\alpha^\c}$
for all $\alpha$.

\begin{proposition}\label{stan-prop} If $\pi \in S_n$ then
$\Psi_>([\pi]) =\Psi_\geq([\pi]) =F_\pi$ 
and
$\Psi_<([\pi]) =\Psi_\leq([\pi]) =\omega(F_\pi)=F_{\pi^{-1}}$.
\end{proposition}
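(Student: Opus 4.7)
The plan is to compute each $\Psi_\bullet([\pi])$ by expanding $[\pi] = \sum_{w \in \cR(\pi)} [w, n]$ in $\bfPi_n$ and applying Proposition~\ref{words-opsi-prop} termwise. For a reduced word $w$ of length $l = \ell(\pi)$, let $\alpha(w)$ denote the composition of $l$ with $I(\alpha(w)) = \Des(w)$; since reduced words contain no equal adjacent letters, $\Asc(w) = [l-1] \setminus \Des(w) = I(\alpha(w)^{\c})$. Proposition~\ref{words-opsi-prop} directly gives $\Psi_>([w,n]) = L_{\alpha(w)^{\c}}$ and $\Psi_{\leq}([w,n]) = L_{\alpha(w)}$. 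For $\Psi_\geq$ and $\Psi_<$, I would substitute $w^{\r}$ for $w$ in the corresponding formulas of Proposition~\ref{words-opsi-prop} and use the elementary identity $\Des(w^{\r}) = \{l - k : k \in \Asc(w)\}$ to simplify the resulting compositions $\alpha^{\r}$ and $\alpha^{\t}$; a short index calculation then shows $\Psi_\geq([w,n]) = L_{\alpha(w)^{\c}}$ and $\Psi_<([w,n]) = L_{\alpha(w)}$, so all four maps take only two distinct values on reduced words.

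Summing over $w \in \cR(\pi)$ yields $\Psi_>([\pi]) = \Psi_\geq([\pi]) = \sum_w L_{\alpha(w)^{\c}}$ and $\Psi_<([\pi]) = \Psi_\leq([\pi]) = \sum_w L_{\alpha(w)}$. I would then compare with definition \eqref{stan-def}: the inner sum over weakly increasing tuples $i_1 \leq \cdots \leq i_l$ with $i_j < i_{j+1}$ whenever $w_j < w_{j+1}$ is precisely $L_\gamma$ where $I(\gamma) = \Asc(w) = I(\alpha(w)^{\c})$. Hence $F_\pi = \sum_w L_{\alpha(w)^{\c}}$, which matches $\Psi_>([\pi]) = \Psi_\geq([\pi])$, and applying $\omega(L_\alpha) = L_{\alpha^{\c}}$ gives $\omega(F_\pi) = \sum_w L_{\alpha(w)}$, which matches $\Psi_<([\pi]) = \Psi_\leq([\pi])$.

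It remains to verify the classical identity $\omega(F_\pi) = F_{\pi^{-1}}$. The most conceptual proof invokes the Edelman-Greene correspondence, which gives $F_\pi = \sum_\lambda g_{\pi, \lambda} s_\lambda$ with $g_{\pi, \lambda}$ counting Edelman-Greene tableaux of shape $\lambda$ whose reading word lies in $\cR(\pi)$; transposing such a tableau produces an Edelman-Greene tableau for $\pi^{-1}$ of transposed shape, so $g_{\pi, \lambda} = g_{\pi^{-1}, \lambda^T}$, and the identity follows from $\omega(s_\lambda) = s_{\lambda^T}$. This is the main obstacle in the proof, since it either requires citing Edelman-Greene theory externally or establishing the equivalent descent-set symmetry $\#\{w \in \cR(\pi) : \Des(w) = D\} = \#\{w \in \cR(\pi) : \Des(w) = l - D\}$ by another combinatorial means; by contrast, the first three equalities are a formal consequence of Proposition~\ref{words-opsi-prop} together with basic manipulations of descent/ascent sets.
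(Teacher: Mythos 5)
Your proposal is correct and follows essentially the same route as the paper: expand $[\pi] = \sum_{w\in\cR(\pi)}[w,n]$, apply Proposition~\ref{words-opsi-prop} termwise using the fact that reduced words have no equal adjacent letters, and compare with \eqref{stan-def}. (The paper's first paragraph gets $\Psi_> = \Psi_\geq$ and $\Psi_< = \Psi_\leq$ slightly more directly from \eqref{psi-def} rather than via the $\alpha^\r,\alpha^\t$ manipulations, but this is only a cosmetic difference.) The one place where you reach for heavier machinery than needed is the last equality $\omega(F_\pi) = F_{\pi^{-1}}$. You propose Edelman--Greene or a bijective argument for the descent-set symmetry, but this is actually a formal consequence of the fact $F_\pi \in \Sym$ (Stanley's theorem, already cited in the paper and already needed in your argument in order to apply $\omega$ to $F_\pi$ at all). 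Indeed, reversal $w \mapsto w^\r$ is a bijection $\cR(\pi)\to\cR(\pi^{-1})$, and for reduced words one has $\alpha(w^\r) = \alpha(w)^\t$, so $F_{\pi^{-1}} = \sum_{w\in\cR(\pi)} L_{(\alpha(w)^\t)^\c} = \sum_w L_{\alpha(w)^\r}$, while you already showed $\omega(F_\pi) = \sum_w L_{\alpha(w)}$. Since $\omega(F_\pi) \in \Sym$, and $\Sym$ is pointwise fixed by the involution $L_\alpha \mapsto L_{\alpha^\r}$ (equivalently $M_\alpha\mapsto M_{\alpha^\r}$), the two sums agree, giving $\omega(F_\pi) = F_{\pi^{-1}}$. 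So the ``descent-set symmetry'' you identify is not a separate combinatorial input; it comes for free once $F_\pi\in\Sym$ is granted, which the paper cites to \cite{Stan} and then simply records $\omega(F_\pi)=F_{\pi^{-1}}$ as \cite[Cor.~7.22]{Macdonald}.
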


The last equality is \cite[Corollary 7.22]{Macdonald}.

\begin{proof}
Let $\pi \in S_n$.
Since every weakly increasing (respectively, decreasing) consecutive subsequence of a reduced word for $\pi$
is strictly increasing (respectively, decreasing),
it is clear from \eqref{psi-def} that
$\Psi_>([\pi]) =\Psi_\geq([\pi])$ and $\Psi_<([\pi]) =\Psi_\leq([\pi])$.
The inner sum in \eqref{stan-def}
is $L_{\alpha^\c}$ for the composition $\alpha \vDash \ell(w)$ with $I(\alpha) =\Des(w)$,
so Proposition~\ref{words-opsi-prop} implies that $F_\pi = \Psi_>([\pi])$.
Since $F_\pi \in \Sym$, it follows that $F_{\pi^{-1}} = \Psi_{<}([\pi])$
and $\omega \circ \Psi_>([\pi]) =  \Psi_\leq([\pi])$,
 which implies the other equalities.
 \end{proof}

Applying $\Psi_>$ to Corollary~\ref{coprod-cor} and Theorem~\ref{a-shuff-thm}
gives the following identities:

\begin{corollary}
If $\pi \in S_n$ then
$\Delta(F_\pi) = \sum_{\pi \overset{\bullet}=\pi'\pi''} F_{\pi'} \otimes F_{\pi''}$.
\end{corollary}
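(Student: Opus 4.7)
The plan is to simply combine the two inputs that the corollary explicitly cites: Proposition~\ref{stan-prop}, which identifies $F_\pi$ with $\Psi_>([\pi])$, and Corollary~\ref{coprod-cor}, which computes the coproduct of $[\pi]$ in $\bfPi$. The only remaining ingredient is the fact that $\Psi_>$ is a bialgebra morphism, which is part of Theorem~\ref{unique-thm} (applied with $\zeta=\zeta_>$).

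Concretely, I would write:
\begin{align*}
\Delta(F_\pi) &= \Delta(\Psi_>([\pi])) \\
&= (\Psi_> \otimes \Psi_>)(\Delta_\odot([\pi])) \\
&= (\Psi_> \otimes \Psi_>)\Bigl(\sum_{\pi \overset{\bullet}= \pi'\pi''} [\pi']\otimes [\pi'']\Bigr) \\
&= \sum_{\pi \overset{\bullet}= \pi'\pi''} F_{\pi'}\otimes F_{\pi''},
\end{align*}
where the first equality uses Proposition~\ref{stan-prop}, the second uses that $\Psi_>$ commutes with coproducts (being a morphism of combinatorial bialgebras per Theorem~\ref{unique-thm}), the third uses Corollary~\ref{coprod-cor}, and the fourth uses linearity and Proposition~\ref{stan-prop} again on each factor.

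There is no real obstacle here: the substance of the corollary has been pushed entirely onto Proposition~\ref{stan-prop} and the bialgebra structure developed in Sections~\ref{perm-sect}--\ref{qsym-sect}. The only thing to double-check is that $\Psi_>$ is honestly a \emph{bialgebra} morphism (rather than merely a coalgebra morphism), so that applying $\Delta$ after $\Psi_>$ equals applying $(\Psi_>\otimes\Psi_>)$ after $\Delta_\odot$; this is guaranteed because $\zeta_> \in \XX(\bfW)$ (which in turn is routine to verify from the definitions of $\nabla_\shuffle$ and $\zeta_>$, since the shuffle of two strictly decreasing reduced-word sequences on disjoint alphabets contains a strictly decreasing shuffle if and only if both inputs were strictly decreasing).
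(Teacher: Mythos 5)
Your proof is correct and is exactly the paper's argument: the paper proves this corollary by the one-line observation that one should apply $\Psi_>$ to Corollary~\ref{coprod-cor}, which works precisely because $\Psi_>$ is a (bi)coalgebra morphism via Theorem~\ref{unique-thm} and $\Psi_>([\pi])=F_\pi$ via Proposition~\ref{stan-prop}. Your parenthetical sketch of why $\zeta_>\in\XX(\bfW)$ is a bit loosely worded (the point is that there is exactly one strictly decreasing shuffle of $v$ and $w\uparrow m$, namely $(w\uparrow m)v$, when both $v$ and $w$ are strictly decreasing, and none otherwise), but that check is already a stated proposition in the paper and not needed here.
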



\begin{corollary}\label{prod-cor}
If $\pi' \in S_{m}$ and $\pi'' \in S_{n}$ then
$F_{\pi'}F_{\pi''} = \sum_{\pi \in \cS_\shuffle(\pi',\pi'')} F_\pi$.
\end{corollary}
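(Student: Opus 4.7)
The plan is to deduce this directly by applying the bialgebra morphism $\Psi_>$ to the product formula in Theorem~\ref{a-shuff-thm}. All the ingredients are already in place, so this amounts to a short deduction rather than a new argument.

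First I would reconcile the indexing conventions. Theorem~\ref{a-shuff-thm} is stated with $u \in S_{m+1}$ and $v \in S_{n+1}$ because of how $\bfPi$ is graded (an element $[\pi]$ for $\pi \in S_{k+1}$ sits in $\bfPi_k$), while Corollary~\ref{prod-cor} uses the ungraded indexing $\pi' \in S_m$ and $\pi'' \in S_n$. So I would just reindex: set $m' = m - 1$ and $n' = n - 1$ so that $\pi' \in S_{m'+1}$ and $\pi'' \in S_{n'+1}$, and observe that Theorem~\ref{a-shuff-thm} then yields
\[
\nabla_\shuffle([\pi'] \otimes [\pi'']) \;=\; \sum_{\pi \in \cS_\shuffle(\pi',\pi'')} [\pi] \quad \text{in } \bfPi.
\]

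Next, I would appeal to Theorem~\ref{unique-thm}, which tells us that $\Psi_>$ restricted to $\bfPi$ is a morphism of combinatorial bialgebras $(\bfPi,\zeta_>) \to (\QSym,\zetaq)$; in particular it is an algebra homomorphism. Applying $\Psi_>$ to both sides of the displayed identity and using multiplicativity on the left gives
\[
\Psi_>([\pi']) \cdot \Psi_>([\pi'']) \;=\; \sum_{\pi \in \cS_\shuffle(\pi',\pi'')} \Psi_>([\pi]).
\]

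Finally, Proposition~\ref{stan-prop} identifies $\Psi_>([\sigma]) = F_\sigma$ for every permutation $\sigma$, and substituting this into both sides yields exactly $F_{\pi'} F_{\pi''} = \sum_{\pi \in \cS_\shuffle(\pi',\pi'')} F_\pi$. Since every nontrivial step (the product formula in $\bfPi$, the fact that $\Psi_>$ is an algebra morphism, and the evaluation $\Psi_>([\pi]) = F_\pi$) is already established, there is really no obstacle to overcome; the only thing worth being careful about is the shift in indexing between $\bfPi_k$ and $S_{k+1}$, which is purely bookkeeping.
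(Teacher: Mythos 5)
Your proposal is correct and matches the paper's own argument: the paper states Corollary~\ref{prod-cor} immediately after remarking that it follows by applying $\Psi_>$ to Theorem~\ref{a-shuff-thm}, using that $\Psi_>$ is an algebra morphism (Theorem~\ref{unique-thm}) and that $\Psi_>([\pi]) = F_\pi$ (Proposition~\ref{stan-prop}). Your extra care about the index shift between $S_{k+1}$ and $\bfPi_k$ is harmless bookkeeping that the paper leaves implicit.
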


Let $w_0^{(n)} = n\cdots 321 \in S_{n}$.
It is well-known that $F_{w_0^{(n)}} = s_{\delta_n}$ for 
$\delta_n=(n-1,\dots,3,2,1)$ \cite{Stan}.
Moreover, if $n \in \PP$ and $p=\lfloor \frac{n+1}{2}\rfloor$ and $q= \lceil \frac{n+1}{2}\rceil$,
then
$s_{\delta_p}s_{\delta_q}= P_{(n-1,n-3,n-5,\dots)}$ \cite[Corollary 1.14]{HMP4},
where $P_\lambda \in \Sym$ is the \emph{Schur $P$-function}
indexed by a strict partition $\lambda$ (see \cite[\S A.3]{Stem}).

\begin{corollary}\label{p-cor} If $n \in \PP$ and $p\in \{\lfloor \frac{n+1}{2}\rfloor, \lceil \frac{n+1}{2}\rceil\}$ then
$P_{(n-1,n-3,n-5,\dots)} = \sum_{\pi \in \cB(p,n)} F_\pi.$
\end{corollary}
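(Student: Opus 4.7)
The plan is to combine the product formula in Corollary~\ref{prod-cor} with two classical facts about Stanley symmetric functions of the longest element. All four ingredients are already stated in the excerpt, so the argument is essentially a matter of stringing them together.

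First, I would set $q = n+1-p$ and consider the two reverse permutations $w_0^{(p)} = p\cdots 321 \in S_p$ and $w_0^{(q)} = q\cdots 321 \in S_q$. The text observes (just before Theorem~\ref{a-shuff-thm}) that $\cS_\shuffle(p\cdots 321, q\cdots 321) = \cB(p,n)$ whenever $p+q=n+1$. Therefore, applying Corollary~\ref{prod-cor} to this pair gives
\[
F_{w_0^{(p)}}\, F_{w_0^{(q)}} \;=\; \sum_{\pi \in \cS_\shuffle(w_0^{(p)},\, w_0^{(q)})} F_\pi \;=\; \sum_{\pi \in \cB(p,n)} F_\pi.
\]

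Second, I would invoke the two classical identities recalled immediately before Corollary~\ref{p-cor}: we have $F_{w_0^{(p)}} = s_{\delta_p}$ and $F_{w_0^{(q)}} = s_{\delta_q}$ by Stanley \cite{Stan}, and when $p = \lfloor \tfrac{n+1}{2}\rfloor$ and $q = \lceil \tfrac{n+1}{2}\rceil$ the product $s_{\delta_p} s_{\delta_q}$ equals $P_{(n-1,n-3,n-5,\dots)}$ by \cite[Corollary 1.14]{HMP4}. Combining the two displays yields $P_{(n-1,n-3,\dots)} = \sum_{\pi \in \cB(p,n)} F_\pi$ for this specific choice of $(p,q)$.

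Third, to cover the other case $p = \lceil \tfrac{n+1}{2}\rceil$ (relevant only when $n$ is even, since otherwise the two values coincide), I would use the commutativity of $\Sym$: although $\bfPi$ is noncommutative, the images $F_{w_0^{(p)}}$ and $F_{w_0^{(q)}}$ live in $\Sym$, so swapping the factors gives the same Schur product $s_{\delta_p}s_{\delta_q} = s_{\delta_q}s_{\delta_p}$. Thus $\sum_{\pi \in \cB(q,n)} F_\pi$ also equals $P_{(n-1,n-3,\dots)}$, and relabeling gives the claim for the other admissible value of $p$. There is no genuine obstacle here; the only thing to double-check is the bookkeeping that $\cS_\shuffle(w_0^{(p)},w_0^{(q)}) = \cB(p,n)$, which is immediate by comparing the recursive definition of $\cB(p,n)$ from the introduction with Definition~\ref{a-shuffle-def} specialized to two decreasing inputs.
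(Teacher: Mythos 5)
Your proof is correct and is essentially the same as the paper's one-line argument, which just applies $\Psi_>$ to Theorem~\ref{intro-thm}(a) (i.e.\ to the $u=p\cdots 321$, $v=q\cdots 321$ case of Theorem~\ref{a-shuff-thm}, which is exactly what Corollary~\ref{prod-cor} packages) and then invokes $F_{w_0^{(k)}}=s_{\delta_k}$ together with \cite[Corollary 1.14]{HMP4}. Your explicit remark about commutativity of $\Sym$ to cover both choices of $p$ is a sound way of making precise what the paper leaves implicit in ``$m=p\in\{\lfloor\frac{n+1}{2}\rfloor,\lceil\frac{n+1}{2}\rceil\}$.''
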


\begin{proof}
This follows by applying $\Psi_>$ to Theorem~\ref{intro-thm}(a) with $m=p\in \{\lfloor \frac{n+1}{2}\rfloor, \lceil \frac{n+1}{2}\rceil\}$.
\end{proof}

\begin{example}
For $n=5$, we have $P_{(4,2)} = 
F_{32541} +F_{34512} + F_{35142} + F_{42513} + F_{45123} + F_{52143}.$
\end{example}

\subsection{Types B, C, and D}\label{last-subsect}

In a similar way,
we can realize the {Stanley symmetric functions of types B, C, and D} from \cite{BH,FominKirillov,TKLamThesis,TKLam}
as the images of certain canonical morphisms from algebraic structures on signed permutations.

Given $n \in \PP$, let $B_n$ denote the group of \emph{signed permutations}
of $\{\pm 1, \pm 2,\dots,\pm n\}$,
that is, permutations $\pi$ such that $\pi(-i) = -\pi(i)$ for all $i$.
Let $s^B_1, s^B_2,s^B_3,\dots,s^B_n \in B_n$ be the elements
\[ s^B_1 =(-1,1)\qquand s^B_i = (-i,-i+1)(i-1,i)\quad \text{for $2\leq i \leq n$}.\]
Let $s^D_1,s^D_2,s^D_3,\dots,s^D_n \in B_n$ be the elements
\[ s^D_1 = s^B_1s^B_2s^B_1 = (-2,1)(-1,2) \qquand s^D_i = s^B_i\quad \text{for $2\leq i \leq n$},\]
The normal subgroup $D_n := \langle s_1^D,s_2^D,\dots,s_n^D\rangle  \subset B_n = \langle s_1^B,s_2^B,\dots,s_n^B\rangle$ consists of the signed permutations $\pi$
for which the number of integers $1 \leq i \leq n$ with $\pi(i)<0$ is even.

It is well-known that 
$B_n$ and $D_n$ are the finite Coxeter group of types $B_n$/$C_n$ and $D_n$
relative to the generating sets $\{s_1^B,s_2^B,\dots,s_n^B\}$
and $\{s_1^D,s_2^D,\dots,s_n^D\}$.
Given $\pi \in B_n$,
write $\cR^B(\pi)$ and $\cR^D(\pi)$ 
for the sets of words $i_1i_2\cdots i_l$ of minimal length such that
$\pi = s^B_{i_1}s^B_{i_2}\cdots s^B_{i_l}$
and $\pi = s^D_{i_1}s^D_{i_2}\cdots s^D_{i_l}$, respectively.
We refer to elements of these sets as \emph{reduced words}.
Evidently $\cR^D(\pi)$ is empty unless $\pi \in  D_n$.
Write $\ell^B(\pi)$ for the common value of $\ell(w)$ for all $w \in \cR^B(\pi)$,
and define $\ell^D(\pi)$ for $\pi \in D_n$ analogously.
Explicit formulas for these length functions appear in \cite[Chapter 8]{CCG}.

For  $\pi \in B_n$,
define 
$ [\pi]_B = \sum_{w \in\cR^B(\pi)} [w,n] \in \bfW_n
$
and
$
[\pi]_D =\sum_{w \in \cR^D(\pi)} [w,n] \in \bfW_n
$.
Let 
\[ 
\bfPi^B_n = \kk\spanning \left\{ [\pi]_B : \pi \in B_n \right\} 
\qquand
\bfPi^D_n = \kk\spanning \left\{ [\pi]_D : \pi \in D_n \right\}
\]
 and define 
 \[\bfPi^B = \bigoplus_{n\geq 1} \bfPi^B_n
\qquand \bfPi^D = \bigoplus_{n \geq 2} \bfPi^D_n.\]
Observe that $[\pi]_D = 0$ if $\pi \in B_n - D_n$.
The nonzero elements $[\pi]_B \in \bfPi^B$ and $[\pi]_D \in \bfPi^D$
are homogeneous of degree $\ell^B(\pi)$ and $\ell^D(\pi)$, respectively.

Suppose $B$ is a (graded) bialgebra and $M$ is a (graded) coalgebra.
One says that $M$ is a (graded) right \emph{$B$-module coalgebra} if $M$ is a (graded) right $B$-module
such that the multiplication map $M \otimes B \to M$ is a (graded) coalgebra morphism. 
Since $\bfPi$, $\bfPi^B$, and $\bfPi^D$ are all subspaces of 
the bialgebra $\bfW$,
we can multiply their elements together using $\nabla_\shuffle$.

\begin{theorem}
The subspaces $\bfPi^B$ and $\bfPi^D$ are 
graded sub-coalgebras of $(\bfW,\Delta_\odot,\epsilon_\odot)$
and 
graded right module coalgebras for $(\bfPi,\nabla_\shuffle,\iota_\shuffle,\Delta_\odot,\epsilon_\odot)$.
\end{theorem}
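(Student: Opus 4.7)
The plan is to mimic the proof of Theorem~\ref{sub-b-thm}, using the word property for Coxeter groups of types $B$ and $D$ \cite[Theorem 3.3.1]{CCG} in place of its type $A$ counterpart. I describe the argument for $\bfPi^B$ in detail; the case of $\bfPi^D$ is handled identically, with the type $B$ braid relation replaced by the type $D$ one.

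\textbf{Sub-coalgebra structure.} In any Coxeter group, every prefix and suffix of a reduced word is itself a reduced word, the resulting factorization is length-additive, and every length-additive factorization arises this way by concatenation. Combined with Matsumoto's theorem (reduced words for a fixed element form a single equivalence class under braid and commutation moves, characterized as the classes containing no word with adjacent repeated letters), this yields
\[
\Delta_\odot([\pi]_B) = \sum_{\pi \overset{\bullet}= \pi'\pi''} [\pi']_B \otimes [\pi'']_B \in \bfPi^B \otimes \bfPi^B,
\]
together with $\epsilon_\odot([\pi]_B) = 1$ if $\ell^B(\pi)=0$ and $0$ otherwise. Hence $\bfPi^B$ is a graded sub-coalgebra of $\bfW$, and the same argument applies to $\bfPi^D$.

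\textbf{The action lands in $\bfPi^B$.} Fix $\pi \in B_m$ and $\sigma \in S_{n+1}$, and let $\Sigma$ denote the set of all words $c$ obtained by shuffling some $a \in \cR^B(\pi)$ with $b\uparrow m$ for some $b \in \cR(\sigma)$. Each such $c$ has no two adjacent equal letters: adjacent letters from a single source are distinct since $a$ and $b$ are reduced, while letters from the two different sources lie in disjoint ranges $\{1,\dots,m\}$ and $\{m+1,\dots,m+n\}$. I claim $\Sigma$ is closed under the Coxeter equivalence relation of $B_{m+n}$: (i) commutations $ij \sim ji$ with $|i-j|>1$ that lie within $a$ or within $b\uparrow m$ stay in $\Sigma$ by the word property for $\cR^B(\pi)$ and $\cR(\sigma)$, while commutations swapping a letter of $a$ with a letter of $b\uparrow m$ merely re-interleave; (ii) type A braids $i(i+1)i \sim (i+1)i(i+1)$ with $i \geq 2$ act analogously, and the only potentially mixed case is $i=m$, but a consecutive triple $m(m+1)m$ or $(m+1)m(m+1)$ in $c$ would force two adjacent equal letters in $a$ or in $b\uparrow m$, an impossibility; (iii) the type $B$ braid $1212 \sim 2121$ acts entirely within $a$ when $m\geq 2$, and when $m=1$ the reduced word $a \in \cR^B(\pi)$ for $\pi \in B_1$ has at most one letter, precluding $1212$ or $2121$ as a consecutive subword of any shuffle. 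Since $\Sigma$ consists of words without adjacent repeats and is closed under the equivalence relation, each of its equivalence classes equals $\cR^B(\tau)$ for some $\tau \in B_{m+n}$, giving
\[
\nabla_\shuffle([\pi]_B \otimes [\sigma]) = \sum_\tau [\tau]_B \in \bfPi^B.
\]
For $\bfPi^D$, replace (iii) by the type $D$ braid $1\hs 3\hs 1 \sim 3\hs 1\hs 3$; the only potentially mixed case is $m=2$, where a consecutive $131$ (resp.~$313$) in $c$ forces two adjacent $1$'s in $a$ (resp.~$3$'s in $b\uparrow 2$), again impossible.

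\textbf{Module coalgebra axiom.} With the sub-coalgebra property and the closure of the action established, the compatibility diagrams that make $\nabla_\shuffle : \bfPi^B \otimes \bfPi \to \bfPi^B$ a coalgebra morphism follow immediately by restriction from the bialgebra axioms for $(\bfW, \nabla_\shuffle, \iota_\shuffle, \Delta_\odot, \epsilon_\odot)$ proved in Theorem~\ref{w-thm}, and the same reasoning gives the module coalgebra structure on $\bfPi^D$. The main obstacle is the exhaustive case analysis in the second step, verifying that no Coxeter relation of $B_{m+n}$ or $D_{m+n}$ can carry a shuffle of reduced words outside $\Sigma$; the analysis hinges on how the special generators $s^B_1$, $s^D_1$, and $s^D_3$ interact with the split at position $m$.
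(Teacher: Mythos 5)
Your overall strategy matches the paper's: both invoke the word property \cite[Theorem 3.3.1]{CCG} for the type $B$ and $D$ Coxeter systems, deduce the coalgebra closure immediately, and establish the module closure by showing that the set $\Sigma$ of shuffles of reduced words is a union of equivalence classes under the relevant Coxeter relation. The paper's own proof is much terser, deferring the module-closure step to ``arguments similar to the proof of Theorem~\ref{a-shuff-thm}''; what you have done is spell out the underlying Coxeter-relation case analysis explicitly, in the style of the paper's proof of Theorem~\ref{sub-b-thm}. That is the right way to fill in the details.

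There is, however, a gap in your treatment of type $D$. You say ``the case of $\bfPi^D$ is handled identically, with the type $B$ braid relation replaced by the type $D$ one,'' and accordingly you replace (iii) by the relation $131\sim_D 313$. But the type $D$ relations differ from type $B$ in two further ways, as the paper explicitly records in its own proof: the commutation $ij\sim_D ji$ is \emph{disallowed} when $\{i,j\}=\{1,3\}$, and there is an \emph{additional} commutation $12\sim_D 21$ (because $s^D_1$ and $s^D_2$ commute even though $|1-2|=1$). Your case (i) covers only $|i-j|>1$, so $12\sim_D 21$ falls through all of (i)--(iii): it is not a long-range commutation, not an $i(i+1)i$ braid, and not the $131\sim 313$ relation. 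Without verifying closure under $12\sim_D 21$, you have not shown that $\Sigma$ is a union of $\sim_D$-classes. The fix is easy — since $\bfPi^D=\bigoplus_{m\ge 2}\bfPi^D_m$, both letters $1$ and $2$ lie in $\{1,\dots,m\}$ and therefore both come from $a\in\cR^D(\pi)$, so the consecutive pair must come from adjacent letters of $a$ and closure follows from the word property for $D_m$ — but this case should be stated. (The other discrepancy, excluding $\{1,3\}$ from the commutations, causes no error: it simply means there is one fewer relation under which you must check closure, so your extra verification in (i) is harmless even though it is phrased as if $13\sim_D 31$ held.)
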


\begin{proof}
By \cite[Theorem 3.3.1]{CCG}, each set 
 $\cR^B(\pi)$ for $\pi \in B_n$ is  an equivalence class
under
the strongest relation with
$vw\sim_B v'w'$ whenever $v\sim_B v'$ and $w\sim_B w'$,
and
such that 
$ij  \sim_B  ji $ if $|j-i| > 1$,
$  i(i+1)i \sim_B  (i+1)i(i+1) 
$
if $i \geq 2$,
and $1212 \sim_B 2121$.
Likewise, each set
$\cR^D(\pi)$ for $\pi \in D_n$ is an equivalence class under 
the strongest relation with
$vw\sim_D v'w'$ whenever $v\sim_D v'$ and $w\sim_D w'$,
and
such that 
$ij  \sim_D  ji $ if $|j-i| > 1$ and $\{i,j\} \neq \{1,3\}$,
$  i(i+1)i \sim_D  (i+1)i(i+1) 
$
if $i \geq 2$,
and $12 \sim_D 21$ and $131\sim 313$.
An equivalence class under the corresponding relation is equal to $\cR^B(\pi)$ or $\cR^D(\pi)$ for some signed permutation $\pi$
if and only if it contains no words with adjacent repeated letters.
It follows that the coproduct $\Delta_\odot$
satisfies $\Delta_\odot(\bfPi^B) \subset \bfPi^B \otimes \bfPi^B$
and
$\Delta_\odot(\bfPi^D) \subset \bfPi^D \otimes \bfPi^D$.

As $\bfW$ is a bialgebra,
to show that 
$\bfPi^B$ and $\bfPi^D$ are 
graded right $\bfPi$-module coalgebras
it suffices to check that $\nabla_\shuffle(\bfPi^B \otimes \bfPi) \subset \bfPi^B$ and
$\nabla_\shuffle(\bfPi^D \otimes \bfPi) \subset \bfPi^D$.
These inclusions follow by arguments similar to the proof of Theorem~\ref{a-shuff-thm}.
\end{proof}

We write $\pi \overset{\bullet}=_B\pi'\pi''$
to mean that $\pi,\pi',\pi'' \in B_n$ and $\pi = \pi'\pi''$ and $\ell^B(\pi) = \ell^B(\pi') + \ell^B(\pi'')$.
Define the operator $\overset{\bullet}=_D$ similarly.
The following is clear from the preceding proof:

\begin{corollary}\label{coprod-bd-cor}
If $\pi \in B_n$ then 
\[
\Delta_{\odot}([\pi]_B) = \sum_{\pi \overset{\bullet}=_B\pi'\pi''} [\pi']_B\otimes [\pi'']_B
\qquand
\Delta_{\odot}([\pi]_D) = \sum_{\pi \overset{\bullet}=_D\pi'\pi''} [\pi']_D\otimes [\pi'']_D
.\]
\end{corollary}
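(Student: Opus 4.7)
The plan is to apply the explicit formula \eqref{odot-maps} for $\Delta_\odot$ to $[\pi]_B$ (and to $[\pi]_D$) and then reorganize the resulting sum by grouping prefixes and suffixes of reduced words according to which length-additive factorization of $\pi$ they witness. The argument is a direct analogue of the reasoning for Corollary~\ref{coprod-cor} in the proof of Theorem~\ref{sub-b-thm}, but using the Coxeter presentations of $B_n$ and $D_n$ in place of that of $S_n$.

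First I would expand $\Delta_\odot([\pi]_B) = \sum_{w \in \cR^B(\pi)} \sum_{i=0}^{m} [w_1\cdots w_i, n]\otimes [w_{i+1}\cdots w_m, n]$ where $m = \ell^B(\pi)$. The key input is the standard Coxeter-theoretic fact that, for any Coxeter system, every prefix of a reduced word is itself a reduced word (for the corresponding prefix product); dually, for any length-additive factorization, any concatenation of reduced words for the two factors is a reduced word for the product (see \cite[Ch.~3]{CCG}). Applied to $(B_n,\{s^B_1,\ldots,s^B_n\})$, this says that for each $w \in \cR^B(\pi)$ and each $0\le i \le m$, there exist unique $\pi',\pi'' \in B_n$ with $\pi \overset{\bullet}=_B \pi'\pi''$, $w_1\cdots w_i \in \cR^B(\pi')$, and $w_{i+1}\cdots w_m \in \cR^B(\pi'')$; conversely, for every factorization $\pi\overset{\bullet}=_B \pi'\pi''$ and every pair $(u,v) \in \cR^B(\pi')\times\cR^B(\pi'')$, the concatenation $uv$ lies in $\cR^B(\pi)$ and the split at position $\ell^B(\pi')$ recovers $(u,v)$.

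This bijection lets me reindex the double sum as a sum over length-additive factorizations $\pi \overset{\bullet}=_B \pi'\pi''$ and pairs $(u,v)\in \cR^B(\pi')\times \cR^B(\pi'')$, yielding
\[
\Delta_\odot([\pi]_B) = \sum_{\pi \overset{\bullet}=_B \pi'\pi''} \ \sum_{\substack{u\in\cR^B(\pi')\\ v\in\cR^B(\pi'')}} [u,n]\otimes[v,n] \;=\; \sum_{\pi \overset{\bullet}=_B \pi'\pi''}[\pi']_B\otimes[\pi'']_B,
\]
since the inner double sum factors as $\bigl(\sum_u [u,n]\bigr)\otimes\bigl(\sum_v[v,n]\bigr)=[\pi']_B\otimes[\pi'']_B$. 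The statement for the counit-style formula is immediate if needed. The proof in type D is identical: replace $\cR^B,\ell^B,\overset{\bullet}=_B$ by $\cR^D,\ell^D,\overset{\bullet}=_D$ throughout, invoking the same prefix/concatenation property for the Coxeter system $(D_n,\{s^D_1,\ldots,s^D_n\})$. Note that if $\pi\in D_n$ and $\pi\overset{\bullet}=_B \pi'\pi''$ in $B_n$ with either factor outside $D_n$, then $[\pi']_D\otimes[\pi'']_D = 0$, so restricting to factorizations in $D_n$ loses nothing.

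There is no real obstacle here beyond bookkeeping: the result is essentially a reformulation of the observation made parenthetically in the proof of the preceding theorem, and all the combinatorial content is captured by the prefix/concatenation property of reduced words in a Coxeter group. The only subtlety worth checking is that the prefix/suffix elements genuinely lie in $B_n$ (resp.\ $D_n$) rather than in some smaller or larger group, but this is immediate because the generators $s^B_i$ (resp.\ $s^D_i$) that appear in any prefix of a reduced word for $\pi$ are a subset of the generators of the ambient group.
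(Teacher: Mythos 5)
Your proof is correct and takes essentially the same approach as the paper, which states the corollary as "clear from the preceding proof" (the proof that $\bfPi^B$ and $\bfPi^D$ are sub-coalgebras via the Matsumoto--Tits word property). What you spell out in detail is the underlying bijection between pairs $(w,i)$ with $w\in\cR^B(\pi)$ and $0\le i\le\ell^B(\pi)$ and triples $(\pi'\!\overset{\bullet}{=}_B\pi'\pi'',u,v)$ with $u\in\cR^B(\pi')$, $v\in\cR^B(\pi'')$, which rests only on the elementary prefix/concatenation property of reduced words; this is marginally more direct than citing the word property, but it is the same Coxeter-theoretic content the paper intends.
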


The one-line representation of $\pi \in B_n$ is the word $\pi_1\pi_2\cdots \pi_n$ where $\pi_i = \pi(i)$, with negative letters indicated as barred entries $\bar{i}$ rather than $-i$.
For example,  
the eight elements of $B_2$ 
are
$ 12,$ $\bar 1 2,$  $1 \bar 2$, $\bar 1 \bar 2$, $2 1$,  $\bar 2 1$, $ 2 \bar 1$, and $\bar 2 \bar1$.
There should be an analogue of Theorem~\ref{a-shuff-thm}
describing the right action of $\bfPi$ on $\bfPi^B$ and $\bfPi^D$, but this remains to be found.
For example, we have
\[
\ba
{[1\bar{2}\bar{3}]_B}   [213] &= [14\bar{3}\bar{2}5]_B + [1\bar{2}4\bar{3}5]_B + [1\bar{2}\bar{4}35]_B + [1\bar{4}\bar{3}25]_B + [4\bar{2}\bar{3}15]_B,
\\
[1\bar{2}\bar{3}]_D  [213] &= [14\bar{3}\bar{2}5]_D + [1\bar{2}4\bar{3}5]_D + [1\bar{2}\bar{4}35]_D + [1\bar{4}\bar{3}25]_D + [4\bar{2}\bar{3}15]_D + [\bar{4}\bar{2}\bar{3}\bar{1}5]_D,
\\[-8pt]
\\
[1\bar{2}\bar{3}]_B  [231] &= [14\bar{3}5\bar{2}]_B + [1\bar{2}45\bar{3}]_B + [1\bar{2}\bar{4}53]_B + [1\bar{4}\bar{3}52]_B + [4\bar{2}\bar{3}51]_B,
\\
[1\bar{2}\bar{3}]_D  [231] &= [14\bar{3}5\bar{2}]_D + [1\bar{2}45\bar{3}]_D + [1\bar{2}\bar{4}53]_D + [1\bar{4}\bar{3}52]_D + [4\bar{2}\bar{3}51]_D + [\bar{4}\bar{2}\bar{3}5\bar{1}]_D,
\\[-8pt]
\\
[1\bar{3}\bar{2}]_B [312] & = [1\bar{3}5\bar{2}4]_B + [1\bar{5}\bar{2}34]_B + [5\bar{3}\bar{2}14]_B,
\\
[1\bar{3}\bar{2}]_D [312] &= [1\bar{3}5\bar{2}4]_D + [1\bar{5}\bar{2}34]_D + [5\bar{3}\bar{2}14]_D + [\bar{5}\bar{3}\bar{2}\bar{1}4]_D,
\\[-8pt]
\\
[1\bar{3}\bar{2}]_B  [321] &= [1\bar{3}54\bar{2}]_B + [1\bar{4}5\bar{2}3]_B + [1\bar{5}\bar{2}43]_B + [4\bar{3}5\bar{2}1]_B + [5\bar{3}\bar{2}41]_B + [5\bar{4}\bar{2}13]_B,
\\
[1\bar{3}\bar{2}]_D  [321] &= [1\bar{3}54\bar{2}]_D + [1\bar{4}5\bar{2}3]_D + [1\bar{5}\bar{2}43]_D + [4\bar{3}5\bar{2}1]_D + [5\bar{3}\bar{2}41]_D + [5\bar{4}\bar{2}13]_D \\& \quad + [\bar{4}\bar{3}5\bar{2}\bar{1}]_D + [\bar{5}\bar{3}\bar{2}4\bar{1}]_D + [\bar{5}\bar{4}\bar{2}\bar{1}3]_D,
\ea
\]
using infix notation $[\pi']_B[\pi'']$ in place of $\nabla_\shuffle([\pi']_B\otimes[\pi''])$.

\begin{problem}\label{problem}
Describe the products $\nabla_\shuffle([\pi']_B\otimes [\pi''])$ and $\nabla_\shuffle([\pi']_D\otimes [\pi''])$.
\end{problem}

There is a module coalgebra version of Definition~\ref{cc-def}:

\begin{definition}
If $(B,\zeta)$ is a combinatorial bialgebra and $(M,\xi)$ is a combinatorial coalgebra
such that $M$ is a graded right $B$-module coalgebra with $\xi(mb) = \xi(m)\zeta(b)$
for all $m \in M$ and $b\in B$,
then we say that $(M,\xi)$ is a \emph{combinatorial $(B,\zeta)$-module coalgebra}.
A morphism of combinatorial $(B,\zeta)$-module coalgebras is a morphism of combinatorial coalgebras that is 
a $B$-module map.
\end{definition}

For any combinatorial bialgebra $(B,\zeta)$,
the pair $(\QSym,\zetaq)$ is a combinatorial $(B,\zeta)$-module coalgebra
for the $B$-module structure given by
$ \QSym \otimes B \xrightarrow{\id \otimes \Psi } \QSym \otimes \QSym \xrightarrow{ \nabla } \QSym$
where $\Psi$ is the unique morphism of combinatorial bialgebras $(B,\zeta) \to (\QSym,\zetaq)$.
This observation relies in general on Theorem~\ref{unproved-thm},
but certainly holds for $B=\bfPi$ without that result.


\begin{theorem}\label{unique-thm2}
Suppose $\zeta^A : \bfPi \to \kk$ and $\zeta^B : \bfPi^B \to \kk$ and $\zeta^D : \bfPi^D \to \kk$
are maps such that 
 $(\bfPi,\zeta^A)$ is a combinatorial bialgebra
and $(\bfPi^B,\zeta^B)$ and $(\bfPi^D,\zeta^D)$ are combinatorial $(\bfPi,\zeta^A)$-module coalgebras.
There are then unique morphisms of combinatorial $(\bfPi,\zeta^A)$-module coalgebras 
$(\bfPi^B,\zeta) \to (\QSym,\zetaq)$ and $(\bfPi^D,\zeta) \to (\QSym,\zetaq)$.
 \end{theorem}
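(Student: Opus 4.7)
The plan is to follow the strategy of Theorem~\ref{unique-thm}, leaning on the universal property of $(\QSym,\zetaq)$ as the terminal object in the category of combinatorial coalgebras (the underlying content of \cite[Theorem 4.1]{ABS}). Since $\bfPi^B$ and $\bfPi^D$ are graded coalgebras and $\zeta^B,\zeta^D$ agree with the counit in degree zero by hypothesis, the pairs $(\bfPi^B,\zeta^B)$ and $(\bfPi^D,\zeta^D)$ are combinatorial coalgebras. Define $\zeta^B_\alpha : \bfPi^B \to \kk$ for each composition $\alpha$ by the analogue of \eqref{zeta-def}, with $\bfW$ and $\Delta_\odot$ replaced by $\bfPi^B$ and its coproduct, and set
\[ \Psi^B(x) = \sum_\alpha \zeta^B_\alpha(x)\, M_\alpha \qquad\text{for } x \in \bfPi^B, \]
with $\Psi^D$ defined analogously. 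Exactly as in the proof of Theorem~\ref{unique-thm}, $\Psi^B$ is the unique morphism of combinatorial coalgebras $(\bfPi^B,\zeta^B) \to (\QSym,\zetaq)$, and likewise for $\Psi^D$.

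It remains to show that $\Psi^B$ is automatically a $\bfPi$-module map; uniqueness among morphisms of combinatorial $(\bfPi,\zeta^A)$-module coalgebras is then immediate, since any such morphism is in particular a morphism of combinatorial coalgebras. Let $\Psi^A : \bfPi \to \QSym$ denote the morphism obtained from Theorem~\ref{unique-thm} applied to $\zeta^A$, and consider the two maps
\[ f_1 := \Psi^B \circ \nabla_\shuffle \qquad\text{and}\qquad f_2 := \nabla_{\QSym} \circ (\Psi^B \otimes \Psi^A), \]
each a linear map $\bfPi^B \otimes \bfPi \to \QSym$. Equip the graded coalgebra $\bfPi^B \otimes \bfPi$ with the linear functional $\xi := \nabla_\kk \circ (\zeta^B \otimes \zeta^A)$, which agrees with the counit in degree zero because $\zeta^B$ and $\zeta^A$ both do, so that $(\bfPi^B\otimes\bfPi,\xi)$ is a combinatorial coalgebra.

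The key observation is that both $f_1$ and $f_2$ are morphisms of combinatorial coalgebras $(\bfPi^B\otimes\bfPi,\xi)\to(\QSym,\zetaq)$. Indeed, $f_1$ is a graded coalgebra morphism because $\nabla_\shuffle|_{\bfPi^B\otimes\bfPi}$ is a coalgebra morphism (this is the content of $\bfPi^B$ being a right $\bfPi$-module coalgebra) and $\Psi^B$ is a coalgebra morphism; similarly $f_2$ is a composition of coalgebra morphisms, using that $\nabla_{\QSym}$ is a coalgebra morphism since $\QSym$ is a bialgebra. The compatibility $\zetaq \circ f_1 = \xi$ follows from the module-coalgebra compatibility $\zeta^B(mb)=\zeta^B(m)\zeta^A(b)$ together with $\zetaq\circ\Psi^B = \zeta^B$, while $\zetaq \circ f_2 = \xi$ uses instead that $\zetaq$ is an algebra morphism and that $\zetaq\circ\Psi^A = \zeta^A$. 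By the universal property, $f_1 = f_2$, which is exactly the module-map condition on $\Psi^B$. The identical argument handles $\Psi^D$.

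The main obstacle is simply bookkeeping: one must check carefully that the tensor product $(\bfPi^B\otimes\bfPi,\xi)$ is a bona fide combinatorial coalgebra and that the maps $f_1,f_2$ really do land in the category of combinatorial coalgebras over $(\QSym,\zetaq)$. Once these verifications are in hand, the uniqueness in \cite[Theorem 4.1]{ABS} does all the real work, as in the proofs of Theorems~\ref{unique-thm} and~\ref{unproved-thm}.
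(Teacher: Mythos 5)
Your proof is correct and follows essentially the same strategy as the paper: invoke the terminal-object property of $(\QSym,\zetaq)$ from \cite[Theorem 4.1]{ABS} to get the unique coalgebra morphism, then show the two maps $\Psi^B\circ\nabla_\shuffle$ and $\nabla_{\QSym}\circ(\Psi^B\otimes\Psi^A)$ are both morphisms of combinatorial coalgebras out of $(\bfPi^B\otimes\bfPi,\,\nabla_\kk\circ(\zeta^B\otimes\zeta^A))$ and hence coincide. You are slightly more careful than the paper in distinguishing $\Psi^B$ from $\Psi^A$ (the paper overloads the symbol $\Psi$) and in spelling out why $\zetaq$ composes correctly with each map, but the argument is the same.
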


\begin{proof}
Since $(\QSym,\zetaq)$ is the terminal object in the category of combinatorial coalgebras \cite[Theorem 4.1]{ABS},
which contains $(\bfPi^B_n,\zeta^B)$ for all $n \in \PP$,
there exists a unique morphism of combinatorial coalgebras 
$\Psi : (\bfPi^B,\zeta^B) \to (\QSym,\zetaq)$.
This map is a $\bfPi$-module morphism since  
\[ \bfPi^B_m \otimes \bfPi_n \xrightarrow{\ \Psi\otimes \Psi\ }\QSym \otimes \QSym \xrightarrow{\ \nabla\ } \QSym
\quand
\bfPi^B_m \otimes \bfPi_n \xrightarrow{\ \nabla_\shuffle\ }\bfPi^B \xrightarrow{\ \Psi\ } \QSym\]
are both morphisms of combinatorial coalgebras $(\bfPi^B_m \otimes \bfPi_n, \nabla\circ (\zeta^B \otimes \zeta^A)) \to (\QSym,\zetaq)$,
and therefore must coincide. The analogous statement for $(\bfPi^D,\zeta^D)$ follows by the same argument.
\end{proof}

As with Theorem~\ref{unique-thm}, the preceding result may be realized as a special case of a more general principle.
The proof of the following statement requires a slight extension of \cite[Theorem 4.1]{ABS}, applicable to our 
more general definition of combinatorial
coalgebras. This is given in \cite[\S7]{M2}. The result then follows by repeating the preceding proof
with $\bfPi$ and $\bfPi^B$ replaced by $B$ and $M$.

\begin{theorem}
Suppose $(B,\zeta)$ is a combinatorial bialgebra and $(M,\xi)$ is a combinatorial $(B,\zeta)$-module coalgebra.
The unique morphism of combinatorial coalgebras
 $(M,\xi)\to (\QSym,\zetaq)$
 is a $B$-module map, and hence also the unique morphism of  $(B,\zeta)$-module coalgebras.
\end{theorem}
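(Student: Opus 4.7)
The plan is to mimic the argument given for Theorem~\ref{unique-thm2} in the more general setting, using the extended terminal-object statement from \cite[\S7]{M2} in place of \cite[Theorem 4.1]{ABS}. Applying that extension to the combinatorial coalgebra $(M,\xi)$ furnishes a unique morphism of combinatorial coalgebras $\Psi : (M,\xi) \to (\QSym,\zetaq)$. The only new content to verify is that this $\Psi$ is automatically a morphism of $B$-modules; the final clause then follows, since any morphism of $(B,\zeta)$-module coalgebras $M \to \QSym$ is in particular a morphism of combinatorial coalgebras, so uniqueness is inherited.

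To establish $B$-linearity of $\Psi$, I would compare two candidate maps $M \otimes B \to \QSym$. Let $\Phi : (B,\zeta) \to (\QSym,\zetaq)$ denote the unique morphism of combinatorial bialgebras supplied by Theorem~\ref{unproved-thm}. The first candidate is the composition
\[ M \otimes B \xrightarrow{\ \Psi \otimes \Phi\ } \QSym \otimes \QSym \xrightarrow{\ \nabla\ } \QSym, \]
and the second is the $B$-action $M \otimes B \to M$ followed by $\Psi$. If I can show that both are morphisms of combinatorial coalgebras $(M \otimes B,\, \nabla_\kk \circ (\xi \otimes \zeta)) \to (\QSym,\zetaq)$, then uniqueness of such morphisms forces the two to coincide, which is exactly the statement that $\Psi$ commutes with the $B$-action.

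The verification splits into three routine pieces. First, one checks that $M \otimes B$, equipped with the standard tensor coproduct and counit together with the character $\nabla_\kk \circ (\xi \otimes \zeta)$, really is a combinatorial coalgebra in the sense of Definition~\ref{cc-def}. Second, the first candidate is a morphism of combinatorial coalgebras because $\Psi$ and $\Phi$ are, because $\nabla : \QSym \otimes \QSym \to \QSym$ is a coalgebra morphism (as $\QSym$ is a bialgebra), and because $\zetaq$ is an algebra morphism so that $\zetaq \circ \nabla = \nabla_\kk \circ (\zetaq \otimes \zetaq)$. Third, the second candidate is a morphism of combinatorial coalgebras because the module-coalgebra axiom requires the action to be a coalgebra map, while the character compatibility $\zetaq \circ \Psi \circ (\text{action}) = \nabla_\kk \circ (\xi \otimes \zeta)$ is immediate from $\xi(mb) = \xi(m)\zeta(b)$ and $\xi = \zetaq \circ \Psi$.

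The main obstacle is not any single calculation but rather ensuring that the terminal-object result quoted at the start really applies at our level of generality, since the original \cite[Theorem 4.1]{ABS} is stated only for connected combinatorial coalgebras of finite graded dimension whereas $M$ here has neither property. This is exactly the gap that \cite[\S7]{M2} is invoked to close, and the rest of the argument is a formal diagram chase identical in structure to the proof of Theorem~\ref{unique-thm2}.
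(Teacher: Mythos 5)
Your proposal is correct and matches the paper's approach exactly: the paper also derives the result by invoking the extended terminal-object theorem from \cite[\S7]{M2} and then repeating the proof of Theorem~\ref{unique-thm2} verbatim with $\bfPi$ and $\bfPi^B$ replaced by $B$ and $M$, comparing the two maps $M\otimes B\to\QSym$ as morphisms of combinatorial coalgebras. The additional detail you supply (checking that $M\otimes B$ with character $\nabla_\kk\circ(\xi\otimes\zeta)$ is a combinatorial coalgebra, and that both candidate maps are morphisms to $(\QSym,\zetaq)$) is precisely what the paper leaves implicit.
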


The \emph{type B Stanley symmetric function} of $\pi \in B_n$ with $l=\ell^B(\pi)$ is the power series
\[ F^B_\pi  = \sum_{w \in \cR^B(\pi)} 
\sum_{\substack{
i_1 \leq i_2 \leq \cdots \leq i_l \\ 
j\notin \Peak(w) \text{ if } i_{j-1} = i_j = i_{j+1}
}} 2^{|\{i_1,i_2,\dots,i_l\}| - \ell_0(\pi)} x_{i_1}x_{i_2}\cdots x_{i_l}
\]
where 
$\ell_0(\pi)= |\{ 1\leq i \leq n : \pi(i) < 0\}|$,
which is the number of 1's in every $w \in \cR^B(\pi)$; see \cite[Theorem 2.7]{TKLamThesis}.
The \emph{type C Stanley symmetric function} of $\pi \in B_n$ is 
\[ F^C_\pi = 2^{\ell_0(\pi)} F^B_\pi.\]
(This definition is \cite[Proposition 3.4]{BH}; however, the authors in \cite{BH} refer to $F^C_\pi$ 
as a ``$B_n$ Stanley symmetric function.'')
Finally, the \emph{type D Stanley symmetric function} of $\pi \in D_n$ with $l=\ell^D(\pi)$ is 
\[ F^D_\pi  = \sum_{w \in \cR^D(\pi)} 
\sum_{\substack{
i_1 \leq i_2 \leq \cdots \leq i_l \\ 
j\notin \Peak(w) \text{ if } i_{j-1} = i_j = i_{j+1}
}} 2^{|\{i_1,i_2,\dots,i_l\}| - o_D(w)} x_{i_1}x_{i_2}\cdots x_{i_l}
\]
where $o_D(w)$ denotes the total number of letters in $w$ equal to 1 or 2;
see \cite[Proposition 3.10]{BH}.
The formula for $F^D_\pi$ also makes sense 
when $\pi \in B_n - D_n$, but gives $F^D_\pi = 0$ since $\cR^D(\pi) = \varnothing$. 


The Hopf subalgebra  $\OSym = \Sym \cap \OQSym$ has 
two distinguished bases indexed by strict partitions: the Schur $P$-functions $P_\lambda$ mentioned earlier,
and the \emph{Schur $Q$-functions} given by $Q_\lambda = 2^{\ell(\lambda)} P_\lambda$
 (see \cite[\S A.1]{Stem}).
It holds that $F^B_\pi \in \NN\spanning\{ P_\lambda :\lambda\text{ strict}\}$ \cite{FominKirillov,TKLam},
$F^C_\pi \in \NN\spanning\{ Q_\lambda:\lambda\text{ strict}\}$ \cite[Proposition 3.4]{BH},
and $F^D_\pi \in \NN\spanning\{ P_\lambda:\lambda\text{ strict}\}$ \cite[Proposition 3.12]{BH}.
These symmetric functions are closely related to the Schubert polynomials of classical types B, C, and D \cite{BH}
and to the enumeration of the sets $\cR^B(\pi)$ and $\cR^D(\pi)$ \cite{TKLamThesis}.

Write $o_B(w)$ for the number of letters equal to 1 in a word $w$, so that $o_B(w) = \ell_0(\pi)$ if $w \in \cR^B(\pi)$.
Let $\zeta^C = \zeta_{>|\leq}$ and define $\zeta^B,\zeta^D : \bfW \to \kk$ to be the linear maps with
\[
\zeta^B([w,n]) = 2^{-o_B(w)} \zeta_{>|\leq}([w,n])
\qquand
\zeta^D([w,n]) = 2^{-o_D(w)} \zeta_{>|\leq}([w,n]).
\]
Similarly,
let $\Psi^C = \Psi_{>|\leq}$ and define $\Psi^B, \Psi^D : \bfW \to \QSym$ to be the linear maps with
\[
\Psi^B([w,n]) = 2^{-o_B(w)} \Psi_{>|\leq}([w,n])
\qquand
\Psi^D([w,n]) = 2^{-o_D(w)} \Psi_{>|\leq}([w,n]).
\]
Since $\zeta^C \in \XX(\bfW)$, it is clear that $(\bfPi, \zeta^C)$ is a combinatorial bialgebra.

\begin{proposition}
Let $\sC$ denote the category of combinatorial $(\bfPi, \zeta^C)$-module coalgebras.
\ben
\item[(a)] The pairs $(\bfPi^B, \zeta^B)$, $(\bfPi^B, \zeta^C)$, and $(\bfPi^D, \zeta^D)$
are all objects $\sC$.

\item[(b)]
The map $\Psi^B$ is the unique $\sC$-morphism
$ (\bfPi^B, \zeta^B) \to (\QSym,\zetaq)$.

\item[(c)]
The map $\Psi^C$ is the unique $\sC$-morphism
$ (\bfPi^B, \zeta^C) \to (\QSym,\zetaq)$.

\item[(d)]
The map $\Psi^D$ is the unique $\sC$-morphism
$ (\bfPi^D, \zeta^D) \to (\QSym,\zetaq)$.
\een
\end{proposition}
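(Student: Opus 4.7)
The plan is to reduce everything to the uniqueness statement of Theorem~\ref{unique-thm2}, so the substantive work lies in (i) verifying that the three pairs in (a) genuinely sit in the category $\sC$, and (ii) checking that each of $\Psi^B$, $\Psi^C$, $\Psi^D$ is a morphism in $\sC$ with target $(\QSym,\zetaq)$.

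For part (a), the coalgebra and right $\bfPi$-module structures on $\bfPi^B$ and $\bfPi^D$ are already supplied by the theorem preceding Corollary~\ref{coprod-bd-cor}, and on each degree-zero basis vector $[\emptyset,n]$ both $\zeta^B$ and $\zeta^D$ return $1=\epsilon_\odot([\emptyset,n])$. The only real content is the module compatibility $\xi(\alpha\beta)=\xi(\alpha)\zeta^C(\beta)$ for $\alpha\in\bfPi^X$ and $\beta\in\bfPi$. Here I would use the crucial observation that when we form $\nabla_\shuffle([w,n]\otimes[v,m])$ with $n\geq 1$, the shifted word $v\uparrow n$ contains no letter equal to $1$, and when $n\geq 2$ no letter in $\{1,2\}$ at all; so every shuffle $u$ of $w$ with $v\uparrow n$ satisfies $o_B(u)=o_B(w)$ and (for $n\geq 2$) $o_D(u)=o_D(w)$. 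The scalar $2^{-o_X(w)}$ therefore factors out of $\zeta^X\circ\nabla_\shuffle$, after which $\zeta_{>|\leq}\in\XX(\bfW)$ being an algebra morphism finishes the job. The case $(\bfPi^B,\zeta^C)$ is immediate because $\zeta^C=\zeta_{>|\leq}$ is already an algebra morphism on $\bfW$.

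For parts (b)--(d), uniqueness is provided by Theorem~\ref{unique-thm2}, so I only need to exhibit each $\Psi^X$ as a morphism in $\sC$. Part (c) is immediate: $\Psi^C=\Psi_{>|\leq}$ is a bialgebra morphism from $\bfW$ to $\QSym$ by Theorem~\ref{unique-thm}, hence its restriction to $\bfPi^B$ is a $\bfPi$-module coalgebra morphism with $\zetaq\circ\Psi^C=\zeta^C$. For (b) and (d), the identity $\zetaq\circ\Psi^X=\zeta^X$ is clear on each $[w,n]$ from the definitions of $\Psi^X$ and $\zeta^X$, combined with $\zetaq\circ\Psi_{>|\leq}=\zeta_{>|\leq}$. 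To get the coalgebra-morphism property on $\bfW$ (and hence on the sub-coalgebras $\bfPi^B,\bfPi^D$), I would use the additivity $o_X(w)=o_X(w_1\cdots w_i)+o_X(w_{i+1}\cdots w_m)$ to see that the prefactors in $\Delta_\odot([w,n]) = \sum_i [w_1\cdots w_i,n]\otimes[w_{i+1}\cdots w_m,n]$ align; the claim then reduces to $\Psi_{>|\leq}$ being a coalgebra morphism. The $\bfPi$-module property $\Psi^X(\alpha\beta)=\Psi^X(\alpha)\Psi^C(\beta)$ then repeats the shuffle-invariance argument of part (a), with the coalgebra morphism $\Psi_{>|\leq}$ in place of the algebra morphism $\zeta_{>|\leq}$.

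The main obstacle will be that $\zeta^B$ and $\zeta^D$ are not themselves algebra morphisms on $\bfW$, so one cannot simply lift the arguments to the ambient bialgebra and restrict. What rescues the construction is the numerical accident above: the right $\bfPi$-action shuffles with a word whose letters are shifted by at least the degree of the source, and those shifts are large enough to make the $o_X$-statistic invariant under the action. This has to be tracked especially carefully in type D, where $o_D$ is genuinely non-constant on $\cR^D(\pi)$ because the braid relation $131\sim_D 313$ changes the number of letters in $\{1,2\}$. In particular, $\Psi^D([\pi]_D)$ cannot in general be simplified to a single $2^{-c}K_\alpha$ as in type B (via Proposition~\ref{words-kpsi-prop}); the plan is to keep the sum over $\cR^D(\pi)$ with per-word weights $2^{-o_D(w)}$ intact throughout, and to use only the additivity and shift-invariance of $o_D$, never its (nonexistent) constancy on braid classes.
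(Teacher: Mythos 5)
Your proposal is correct and follows essentially the same route as the paper's proof, which likewise reduces the statement to Theorem~\ref{unique-thm2} together with the key observation that $\kappa^X([w,n])=2^{-o_X(w)}$ is invariant under shuffling with a word shifted by at least $1$ (type B) or $2$ (type D), so that the scalar factors through $\nabla_\shuffle$ and the remaining verification devolves to $\zeta_{>|\leq}\in\XX(\bfW)$ and $\Psi_{>|\leq}$. Your explicit appeal to the additivity $o_X(w)=o_X(w_1\cdots w_i)+o_X(w_{i+1}\cdots w_m)$, which makes $\Psi^X$ a coalgebra morphism, and your warning that $o_D$ is \emph{not} constant on $\cR^D(\pi)$ (so one must keep per-word weights) are both correct details that the paper's terse proof leaves implicit.
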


\begin{proof}
We have $(\bfPi^B, \zeta^C) \in \sC$ since $\zeta^C \in \XX(\bfW)$. Part (c) holds since $\Psi^C$ is 
a morphism of combinatorial bialgebras $(\bfW,\zeta^C) \to(\QSym,\zetaq)$ by definition.
Given these observations and Theorem~\ref{unique-thm2}, the other claims
follow since the linear maps $\kappa^B,\kappa^D: \bfW \to \kk$
with $\kappa^B([w,n])= 2^{-o_B(w)}$ and $\kappa^D([w,n])=2^{-o_D(w)}$
have $\kappa^B \circ \nabla_\shuffle([\pi']_B \otimes [\pi'']) = \kappa^B([\pi']_B)$
if $\pi' \in B_m$ and $\pi'' \in S_n$ and $m\geq 1$,
and
$\kappa^D \circ \nabla_\shuffle([\pi']_D \otimes [\pi'']) = \kappa^D([\pi']_D)$
if $\pi' \in D_m$ and $\pi'' \in S_n$ and $m\geq 2$.
\end{proof}

Finally, we observe that the type B, C, and D Stanley symmetric functions occur as the images of
the unique morphisms of combinatorial module coalgebras described in the previous result.

\begin{proposition}
If $\pi \in B_n$ then $\Psi^B([\pi]_B) = F^B_\pi$ and $\Psi^C([\pi]_B) = F^C_\pi$
and $\Psi^D([\pi]_D) = F^D_\pi$.
\end{proposition}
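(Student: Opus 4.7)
The plan is to reduce all three identities to direct computations via Proposition~\ref{words-kpsi-prop}, exploiting the fact that $\Psi^B$, $\Psi^C$, and $\Psi^D$ differ from $\Psi_{>|\leq}$ only by a scalar depending on $w$. I will first handle the type C case and then derive the other two by factoring out the appropriate powers of $2$.

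For each word $w$ with $\max(w) \le n$, Proposition~\ref{words-kpsi-prop} gives $\Psi^C([w,n]) = \Psi_{>|\leq}([w,n]) = K_{\alpha(w)}$, where $\alpha(w)$ is the peak composition of $\ell(w)$ determined by $I(\alpha(w)) = \Peak(w)$. Summing over $w \in \cR^B(\pi)$ and expanding each $K_{\alpha(w)}$ via the monomial formula recalled in Section~\ref{qsym-sect}, I obtain
\[
\Psi^C([\pi]_B) = \sum_{w \in \cR^B(\pi)}\ \sum_{\substack{i_1 \leq \cdots \leq i_l \\ j \notin \Peak(w) \text{ if } i_{j-1}=i_j=i_{j+1}}} 2^{|\{i_1,\ldots,i_l\}|}\, x_{i_1}\cdots x_{i_l},
\]
which is exactly the defining formula of $F^C_\pi = 2^{\ell_0(\pi)} F^B_\pi$.

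For $\Psi^B$, I invoke the observation (recorded just before the definition of $F^B_\pi$) that $o_B(w) = \ell_0(\pi)$ is constant as $w$ ranges over $\cR^B(\pi)$; this lets me pull the factor $2^{-o_B(w)}$ out of the sum, so
\[
\Psi^B([\pi]_B) = \sum_{w \in \cR^B(\pi)} 2^{-o_B(w)} K_{\alpha(w)} = 2^{-\ell_0(\pi)}\,\Psi^C([\pi]_B) = 2^{-\ell_0(\pi)} F^C_\pi = F^B_\pi.
\]
For $\Psi^D$, the count $o_D(w)$ is in general not constant on $\cR^D(\pi)$, so the factor $2^{-o_D(w)}$ must stay inside the outer sum. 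Writing
\[
\Psi^D([\pi]_D) = \sum_{w \in \cR^D(\pi)} 2^{-o_D(w)} K_{\alpha(w)}
\]
and distributing the monomial expansion of each $K_{\alpha(w)}$, the exponent $|\{i_1,\ldots,i_l\}|$ coming from $K_{\alpha(w)}$ combines with $-o_D(w)$ to produce exactly the exponent $|\{i_1,\ldots,i_l\}|-o_D(w)$ in the definition of $F^D_\pi$.

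There is no genuine obstacle here; the argument is term-by-term bookkeeping once Proposition~\ref{words-kpsi-prop} is in hand. The only conceptual point worth highlighting is the asymmetry between types B and D: the quantity $o_B(w)$ is a Coxeter invariant of $\pi \in B_n$, while $o_D(w)$ is not a Coxeter invariant of $\pi \in D_n$ (for instance, the relation $131 \sim_D 313$ changes the number of $1$'s and $2$'s in a word), which is precisely why the type B normalization may be factored outside the sum while the type D normalization may not.
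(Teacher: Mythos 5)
Your proof is correct and takes essentially the same approach as the paper, whose entire proof reads ``These identities are immediate from the definitions and Proposition~\ref{words-kpsi-prop}''; you have simply unwound the bookkeeping that the paper leaves to the reader, including the useful observation that the type B normalization $2^{-o_B(w)}$ is constant on $\cR^B(\pi)$ while the type D normalization $2^{-o_D(w)}$ is not.
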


Of course, if $\pi \in B_n - D_n$ then $\cR^D(\pi) = \varnothing$ so $[\pi]_D = 0$ and $F^D_\pi = 0$.

\begin{proof}
These identities are immediate from the definitions and Proposition~\ref{words-kpsi-prop}.
\end{proof}

This interpretation may be used to recover \cite[Corollaries 3.5 and 3.11]{BH}:

\begin{corollary}[See \cite{BH}]
Let $\pi \in B_n$.
Then 
$F^B_\pi=F^B_{\pi^{-1}}$ and $F^C_\pi = F^C_{\pi^{-1}}$
and $F^D_\pi = F^D_{\pi^{-1}}$.
\end{corollary}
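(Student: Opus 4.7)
The plan is to exploit the word-reversal bijection $w \mapsto w^r = w_l w_{l-1}\cdots w_1$, which restricts to bijections $\cR^B(\pi) \to \cR^B(\pi^{-1})$ and $\cR^D(\pi)\to\cR^D(\pi^{-1})$ because in any Coxeter group $\pi = s_{i_1}\cdots s_{i_l}$ iff $\pi^{-1} = s_{i_l}\cdots s_{i_1}$. The first substantive step is to analyze how the peak set transforms under reversal. For any word with no two equal adjacent letters---in particular any reduced word---the defining inequality $w_{i-1}\leq w_i > w_{i+1}$ of $\Peak(w)$ sharpens to the strict two-sided condition $w_{i-1} < w_i > w_{i+1}$, and this condition is symmetric under positional reversal. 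Hence $\Peak(w^r) = \{l+1-i : i\in \Peak(w)\}$, and a direct computation at the level of the $I(\alpha)$ bijection then shows that the peak composition $\alpha_w$ of $w$ satisfies $\alpha_{w^r} = (\alpha_w)^{\rpk}$.

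Using Proposition~\ref{words-kpsi-prop} and the definition of $\Psi^B$, this gives
\[
\Psi^B([\pi]_B) = 2^{-\ell_0(\pi)}\sum_{w\in\cR^B(\pi)} K_{\alpha_w},
\]
and since $\ell_0(\pi) = \ell_0(\pi^{-1})$ (both count the $j\in[n]$ with $\pi(j)<0$), applying the reversal bijection to the analogous expression for $\Psi^B([\pi^{-1}]_B)$ yields
\[
\Psi^B([\pi^{-1}]_B) = 2^{-\ell_0(\pi)}\sum_{w\in\cR^B(\pi)} K_{(\alpha_w)^{\rpk}}.
\]
By Lemma~\ref{flat-lem}, the linear involution $\rho:\QSym\to\QSym$ defined by $\rho(L_\beta) = L_{\beta^\r}$ satisfies $\rho(K_\alpha) = K_{\alpha^{\rpk}}$, so $\Psi^B([\pi^{-1}]_B) = \rho(\Psi^B([\pi]_B)) = \rho(F^B_\pi)$. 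It remains to show that $\rho$ fixes $F^B_\pi$, and the cleanest route is to observe that $\rho$ restricts to the identity on $\Sym\subset\QSym$: Gessel's formula $s_\lambda = \sum_{T\in\mathrm{SYT}(\lambda)} L_{\mathrm{co}(T)}$, combined with the Schützenberger evacuation involution $T\mapsto T^e$ on $\mathrm{SYT}(\lambda)$, which satisfies $\mathrm{co}(T^e) = \mathrm{co}(T)^\r$, shows that $\rho(s_\lambda) = s_\lambda$. Since $F^B_\pi \in \OSym\subset\Sym$, we conclude $\rho(F^B_\pi) = F^B_\pi$, hence $F^B_{\pi^{-1}} = F^B_\pi$.

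The type C identity then follows immediately from $F^C_\pi = 2^{\ell_0(\pi)}F^B_\pi$ together with $\ell_0(\pi) = \ell_0(\pi^{-1})$. For type D, the same argument runs with $o_D$ playing the role of the scaling exponent; here one uses that $o_D(w)$ is constant on $\cR^D(\pi)$ because the multiplicity of each Coxeter generator $s^D_i$ in a reduced expression of $\pi$ is an invariant of $\pi$, and that these multiplicities are clearly preserved under inversion. The hard part will be the structural input that $\rho$ acts as the identity on $\Sym$: this is standard but not developed in the paper, so the proof must either cite it or include the brief evacuation argument sketched above.
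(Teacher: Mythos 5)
Your approach is essentially the same as the paper's: use the reversal bijection $w\mapsto w^\r$ from $\cR^B(\pi)$ to $\cR^B(\pi^{-1})$, recognize via Lemma~\ref{flat-lem} (and the peak-composition formulas from Proposition~\ref{words-kpsi-prop}) that this corresponds to applying the involution $\rho : L_\beta \mapsto L_{\beta^\r}$ on $\QSym$, and conclude using that $\rho$ is the identity on $\Sym$. Your intermediate computation $\Peak(w^\r) = \{l+1-i : i\in\Peak(w)\}$ and $\alpha_{w^\r} = (\alpha_w)^\rpk$ is just unwinding what the paper packages into the identity $\Psi_{\geq|<}([w^\r,n]) = K_{\alpha^\rpk}$, so there is no genuinely different route here. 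Two remarks on the details, one of which is a real error.

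First, the final step is correct but overcomplicated: you do not need Gessel's expansion of $s_\lambda$ into fundamentals and Sch\"utzenberger evacuation. Since $\rho$ sends $M_\beta \mapsto M_{\beta^\r}$ (as noted in the proof of Lemma~\ref{flat-lem}), and $m_\lambda = \sum_{\sort(\alpha)=\lambda} M_\alpha$ is visibly fixed by $\alpha\mapsto\alpha^\r$, the map $\rho$ is the identity on $\Sym$ directly from the monomial basis.

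Second, and more substantively, your type D argument relies on the claim that $o_D(w)$ is constant on $\cR^D(\pi)$, justified by asserting that ``the multiplicity of each Coxeter generator $s^D_i$ in a reduced expression of $\pi$ is an invariant of $\pi$.'' Both claims are false. Generator multiplicities are \emph{not} invariant across reduced expressions in a general Coxeter group --- any braid relation of odd length changes them --- and concretely for type D the relation $131 \sim_D 313$ gives $o_D(131)=2$ but $o_D(313)=1$, so $o_D$ is not constant on $\cR^D(s^D_1 s^D_3 s^D_1)$. (This is in contrast to type B, where $o_B$ counts 1's only and the relevant braid $1212\sim_B 2121$ preserves that count, which is why the paper's assertion that $o_B(w)=\ell_0(\pi)$ for all $w\in\cR^B(\pi)$ is valid.) Fortunately this error is easy to repair: you should not factor $2^{-o_D(w)}$ out of the sum at all. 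Keep it inside the sum over reduced words; the reversal bijection pairs $w \leftrightarrow w^\r$, and $o_D(w^\r)=o_D(w)$ trivially because reversal preserves the multiset of letters. Matching term by term then gives $\Psi^D([\pi^{-1}]_D) = \rho(\Psi^D([\pi]_D))$ exactly as in types B and C, without ever needing $o_D$ to be constant.
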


\begin{proof}
No reduced word contains adjacent repeated letters,
so $F^C_{\pi} = \Psi_{>|\leq}([\pi]_B) = \Psi_{\geq|<}([\pi]_B)$.
By Lemma~\ref{flat-lem} and Proposition~\ref{words-kpsi-prop},
 $F^C_{\pi^{-1}}=\Psi_{>|\leq}([\pi^{-1}]_B)$ is the image of $\Psi_{\geq|<}([\pi])$ under the linear involution of $\QSym$
with $L_\beta \mapsto L_{\beta^\r}$.
Since $F^C_{\pi} \in \Sym$, this involution has no effect so $F^B_\pi=F^B_{\pi^{-1}}$ and $F^C_\pi = F^C_{\pi^{-1}}$.
The same argument, \emph{mutatis mutandis}, shows that $F^D_\pi = F^D_{\pi^{-1}}$.
\end{proof}

Applying $\Psi_B$, $\Psi_C$, and $\Psi_D$ to Corollary~\ref{coprod-bd-cor} gives the following:
\begin{corollary}
If $\pi \in B_n$ then the following coproduct formulas hold:
\[\Delta(F^B_\pi) = \sum_{\pi \overset{\bullet}=_B\pi'\pi''} F^B_{\pi'} \otimes F^B_{\pi''},
\quad
\Delta(F^C_\pi) = \sum_{\pi \overset{\bullet}=_B\pi'\pi''} F^C_{\pi'} \otimes F^C_{\pi''},
\quad
\Delta(F^D_\pi) = \sum_{\pi \overset{\bullet}=_D\pi'\pi''} F^D_{\pi'} \otimes F^D_{\pi''}.
\]
\end{corollary}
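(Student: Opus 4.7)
The plan is immediate: apply the morphisms $\Psi^B$, $\Psi^C$, and $\Psi^D$ to the two identities in Corollary~\ref{coprod-bd-cor}. Since each of these maps takes values in the coalgebra $(\QSym,\Delta,\epsilon)$ and intertwines $\Delta_\odot$ with $\Delta$, applying $\Psi^\bullet \otimes \Psi^\bullet$ to both sides will convert the left-hand sides $\Delta_\odot([\pi]_B)$ and $\Delta_\odot([\pi]_D)$ into $\Delta(F^B_\pi)$, $\Delta(F^C_\pi)$, and $\Delta(F^D_\pi)$, while converting the right-hand sides $\sum [\pi']_\bullet \otimes [\pi'']_\bullet$ term-by-term into the claimed sums using the identifications $\Psi^B([\pi]_B) = F^B_\pi$, $\Psi^C([\pi]_B) = F^C_\pi$, and $\Psi^D([\pi]_D) = F^D_\pi$ from the preceding proposition. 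The indexing sets $\overset{\bullet}=_B$ for the $B$ and $C$ formulas and $\overset{\bullet}=_D$ for the $D$ formula are inherited directly from Corollary~\ref{coprod-bd-cor}.

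The only point that warrants brief justification is why each $\Psi^\bullet$ is in fact a graded coalgebra morphism on its domain. For $\Psi^C = \Psi_{>|\leq}$ this is contained in Theorem~\ref{unique-thm}, which produces a bialgebra morphism $\bfW \to \QSym$; its restriction to the graded sub-coalgebra $\bfPi^B \subset \bfW$ remains a coalgebra morphism because the coproduct on $\bfPi^B$ is the restriction of $\Delta_\odot$. For $\Psi^B$ and $\Psi^D$, the preceding proposition identifies them as the unique $\sC$-morphisms $(\bfPi^B,\zeta^B) \to (\QSym,\zetaq)$ and $(\bfPi^D,\zeta^D) \to (\QSym,\zetaq)$, and by definition every $\sC$-morphism is a morphism of combinatorial coalgebras and hence a graded coalgebra morphism.

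Given these observations, the argument reduces to a single line of diagram chasing for each of the three identities. I do not anticipate any significant obstacle: the substantive work (the word-property description of $\cR^B(\pi)$ and $\cR^D(\pi)$ that underpins Corollary~\ref{coprod-bd-cor}, and the identification of the Stanley symmetric functions as images of $\Psi^B$, $\Psi^C$, $\Psi^D$) has already been carried out. The present corollary is a formal consequence that requires only the functoriality of the coproduct under coalgebra morphisms.
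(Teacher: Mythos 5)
Your proposal is correct and matches the paper exactly: the paper's entire proof is the single sentence ``Applying $\Psi_B$, $\Psi_C$, and $\Psi_D$ to Corollary~\ref{coprod-bd-cor} gives the following,'' and your elaboration of why each $\Psi^\bullet$ is a graded coalgebra morphism on its domain is precisely the justification implicit in that remark.
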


An analogue of Corollary~\ref{prod-cor} in types B, C, and D 
would follow from this result and a solution to Problem~\ref{problem}.

\end{document}